\newtheorem {prop}{Proposition} [section] 
\newtheorem {thm}[prop]{Theorem}
 \newtheorem {cor}[prop]{Corollary}
\newtheorem{lem}[prop]{Lemma}
\theoremstyle{definition}
 \newtheorem {rk}[prop]{Remark}
\newtheorem {df}[prop]{Definition}
\newtheorem {dfs}[prop]{Definitions}
\newtheorem {notation}[prop]{Notation}
\newtheorem {notations}[prop]{Notations}
\newcommand{\Q} {\mathbb{Q}}
\newcommand{\G} {\mathbb{G}}
\newcommand{\R} {\mathbb{R}}
\newcommand{\N} {\mathbb{N}}
\newcommand{\C} {\mathcal{C}}
\newcommand{\eps}{\varepsilon}
\newcommand{\A}{\mathcal{A}}
\newcommand{\B}{\mathcal{B}}
\newcommand{\I}{\mathcal{I}}
\newcommand{\U}{\mathcal{U}}
\newcommand{\F}{\mathcal{F}}
\newcommand{\hk}{(H_k;\lambda_k)_{1 \leq k \leq b}}
\newcommand{\pa}{\partial}
\author{Leonid Shartser and Guillaume Valette}
\address
{Instytut Matematyczny PAN, ul. Sw. Tomasaza 30, 31-027 Krakow, Poland}
\address
{Department of Mathematics, University of Toronto, 40 St. George
st, Toronto, ON, Canada M5S 2E4 }
\email{gvalette@math.toronto.edu}
\email{gvalette@impan.pl}
\email{shartl@math.toronto.edu}
\keywords{Homology theory, metric invariants, definable sets, De Rham Theory}
\thanks{}
\subjclass{14P10, 14P25, 55N20}
\begin{document}
\title{De Rham theorem for $L^\infty$ forms and homology on singular spaces.}

\begin{abstract}
We introduce smooth $L^{\infty}$ differential forms on a singular 
(semialgebraic) set $X$ in $\R^n$. Roughly speaking, a smooth $L^{\infty}$ differential form is a certain class of equivalence of 'stratified forms', that is, a collection of smooth forms on disjoint smooth subsets (stratification) of $X$ with matching tangential components on the adjacent strata and bounded size (in the metric induced from $\R^n$).

We identify the singular homology of $X$ as the homology of the chain complex 
generated by semialgebraic singular simplices, i.e. continuous 
semialgebraic maps from the standard simplices into $X$. 
Singular cohomology of $X$ is defined as the homology of the Hom dual 
to the chain complex of the singular chains.
Finally, we prove a De Rham type theorem establishing a natural isomorphism 
between the singular cohomology and the cohomology of smooth $L^{\infty}$ forms.

\end{abstract}
\maketitle

\section{Introduction}

In a recent paper J.-P Brasselet and  M.J.Pflaum [BPf] proved a De Rham type theorem for Whitney functions.
Namely, they showed that the cohomology of the complex of Whitney differential forms naturally coincides
with the singular cohomology of a subanalytic set. 
 
In the present work we study the cohomology of the cochain complex of the, so called, 
smooth $L^\infty$ forms (see Definition \ref{l_infty_class}), which is intrinsically defined for the singular spaces in question.

The singular spaces considered in this paper are semialgebraic subsets of $\R^n$. Let $X\subset\R^n$ be a semialgebraic set. A stratification of $X$ is a collection of smooth (semialgebraic) locally closed manifolds
 (strata) in $X$ such that the boundary of each stratum is a union of strata.

We introduce a version of De Rham theory for
$L^{\infty}$ differential forms on $X$ for which we prove a De Rham type theorem.
An $L^{\infty}$ differential form on $X$ is, roughly speaking, the data of a stratification
 $\Sigma$ of $X$ and
a collection of smooth forms on the nonsingular strata such that the tangential
components of the forms on the adjacent strata match and the size of the form
is bounded (in the metric induced from $\R^n$). We consider two $L^\infty$ forms to be equivalent
if there exists a stratification of $X$ on which the restrictions of the two forms coincide.
The exterior derivative of such forms
is defined by the exterior derivatives of their restrictions to strata. The $L^{\infty}$ forms with $L^{\infty}$ exterior derivatives form
a cochain complex.
The main theorem of this paper is a De Rham type theorem (Theorem \ref{De Rham}). 
Namely, we prove that the map 
$$ \omega \stackrel{\psi}{\mapsto} "c \mapsto \int_c \omega "\ ,$$
from the complex of $L^{\infty} $ forms to the space of semialgebraic singular cochains,
is a natural map of chain complexes that induces an isomorphism on cohomology.
\\

All of the considered subsets and mappings will be assumed to be
semialgebraic, except the differential forms which will be
smooth on each stratum. \\

The paper is organized as follows. 
In Section \ref{sec_main_results} we set the notations and define the objects of study.
In Section \ref{sec_stokes} we prove the Stokes' formula for $L^\infty$ forms, that is, we prove that $\psi$ is a chain-map. 
In Section \ref{sec_de_rham} we prove Theorem \ref{De Rham}. 
One of the key ingredients in the proof of De Rham Theorem is the $L^\infty$ version of Poincar\'{e} lemma, i.e. locally closed $L^\infty$ forms are exact.  
An essential tool in proving our Poincar\'e lemma is a Lipschitz strong deformation retraction
that preserves a certain stratification.
In Section \ref{sec_retract} we construct such deformation retractions. 
We prove that for any point $p\in X$ there exists a  set $N\subset X$ that contains $p$ with $\dim N<\dim X$ and a Lipschitz strong deformation retraction to $N$ of a neighborhood $U$ of $N$ preserving a given stratification of $U$ (see Theorem \ref{retraction}).
In Section \ref{smoothing} we prove the $L^\infty$ Poincar\'e Lemma.

\medskip

\vspace*{4mm}
\noindent \textbf{Acknowledgment. }
We would like to thank Pierre Milman for raising the question, helpful
discussions and important comments.
\vspace*{2mm}

\section{The Main Results}\label{sec_main_results}

We begin by introducing the basic notations and definitions.
All of the considered subsets and mappings will be assumed to be
semialgebraic unless explicitly specified otherwise except the differential forms which will be
smooth on each stratum. 

\begin{notations}
Suppose that $f:X\to Y$ is a map of topological spaces. We write $f(x)\to y$ as $x\to a$ to denote
$\lim_{x\to a} f(x)=y$. Suppose that $S\subset X$. The graph of $f$ over $S$ is 
$$\Gamma_f(S):=\{(x,y)\in S\times Y : y=f(x) \}$$
Denote by $\G^k_n$ the Grassmaniann manifold of $k$ dimensional subspaces of $\R^n$. 

\end{notations}

\begin{dfs}

Suppose that  $S\subset\R^n$. The {\bf closure} of $S$ is denoted by $\overline S$ and the 
{\bf topological boundary} of $S$ is defined by $\pa_{T} S:=\overline S - \overline{(\R^n-S)}$. If $S$ is a manifold then the {\bf boundary} of $S$ is $\pa S:=\overline S - S$. Note that $\pa S$ is a subset of $\R^n$ of dimension at most $\dim S - 1$ (not necessarily smooth).

A {\bf stratified} space $(X,\Sigma)$ is a set $X\subset\R^n$ together with a partition (stratification) $\Sigma$ of $X$
into locally closed manifolds (strata) such that the boundary of each stratum is a union of strata in $\Sigma$.
If $S$ and $S'$ are two strata in $\Sigma$ such that $S'\subset\partial S$ then we write $S'\leq S$. 
Denote by $\Sigma^k$ the collection of all strata in $\Sigma$ of dimension $k$, by $\Sigma^{(k)}$ the collection of all strata up to dimension $k$ and by $|\Sigma|$ the union of all strata in $\Sigma$.
A {\bf refinement} of $\Sigma$ is a stratification $\Sigma'$
such that each stratum of $\Sigma$ is a union of strata of $\Sigma'$. We then write $\Sigma'\prec\Sigma$. If $f:X\to Y$ is a map and $\Sigma$ is a stratification of $X$ then we
write $f(\Sigma)$ to denote the collection of sets $\{f(S):S\in\Sigma\}$.

The {\bf tangent bundle} of $(X,\Sigma)$ is 
$$TX:=\cup_{x\in X}\{x\}\times T_x X\subset \R^n\times\R^n, $$ 
with the subspace topology, where $T_x X:=T_x S$ , if  $x\in S\in\Sigma$. Similarly, 
define the exterior product of the tangent bundle 
as 
$$ \wedge^k TX:=\cup_{x\in X} \{x\}\times\wedge^k T_x X\subset \wedge^k(T\R^n) . $$

A {\bf stratified (resp. smooth) $k$-form} on $X$ is a pair $(\omega,\Sigma)$, where $\Sigma$ is 
a stratification of $X$ and $\omega=(\omega_S)_{S \in\Sigma}$ where
$\omega_S$ is a continuous (resp. smooth) differential $k$-form on $S\in\Sigma$, such that
the graph of $\omega : \wedge^k TX \to \R$, $(x,\xi)\mapsto \omega(x;\xi)$ is closed in $\wedge^k TX\times\R$.

The {\bf exterior derivative} of a smooth stratified form $(\omega,\Sigma)$ is defined as $(d\omega_S)_{S \in\Sigma}$ and denoted by $(d\omega,\Sigma)$.

The {\bf weak exterior derivative}
of a $k$-form $\omega$ in $\R^n$ is a $(k+1)$-form $\overline d\omega$ in $\R^n$ such that for any smooth $(n-k-1)$-form $\varphi$ in $\R^n$, with compact support, we have
\begin{equation}\label{weak_ext_der}
 \int_{\R^n} \overline d \omega\wedge\varphi = (-1)^{k+1}\int_{\R^n} \omega\wedge d\varphi\ .
\end{equation}

Suppose that $S$ is a manifold and $\omega$ is a $k$-form on $S$. 
A $(k+1)$-form $\overline d \omega$ on $S$ is called the weak exterior derivative of $\omega$ 
if for every $p\in S$ there exists an open neighborhood $U$ of $p$ and a diffeomorphism $\phi:\R^n\to U$ 
such that (\ref{weak_ext_der}) holds with $\omega$ replaced by  $\phi^*\omega$ and 
$\overline d\omega$ replaced by $\phi^*\overline d \omega$.

The weak exterior derivative of a stratified form $(\omega,\Sigma)$ is defined as $(\overline d\omega_S)_{S \in\Sigma}$ and denoted 
by $(\overline d\omega,\Sigma)$. 

When there is no confusion possible, we write $\omega$, $d\omega$ and $\overline d\omega$ instead of 
$(\omega,\Sigma)$, $(d\omega,\Sigma)$ and $(\overline d \omega,\Sigma)$.

If $(\omega,\Sigma)$ is a stratified form then a refinement $\Sigma'\prec\Sigma$ induces
a stratified form $(\omega,\Sigma')$ in a canonical way.

Furthermore, given a stratified $k$-form $(\omega,\Sigma)$ on $X$,
we say that $\omega$ is {\bf bounded} if $|\omega_{S}|\leq
C$ for all $S\in\Sigma$ and some positive constant $C$, where
the norm $|\cdot|$ is induced from $\R^n$. 
The set of all stratified and bounded $k$-forms on $X$ 
with stratified exterior derivatives is denoted by $\widetilde \Omega^k(X)$.

\end{dfs}

\begin{notation}
Let $V$ be a vector space, $v_i$, $i=1,\dots ,m$ be some elements of $V$ and 
$I=(i_1,\dots,i_k)$ be a multi-index. 
We denote by $v_I$ the element $v_{i_1}\wedge\dots\wedge v_{i_k}\in \wedge^k(V)$.
Sometimes we will use superscripts instead of subscripts.
\end{notation}

Bounded stratified forms have the following continuity property.
\begin{lem}\label{cont_property}
If $\omega=(\omega,\Sigma)$ is a stratified and bounded $k$-form then for any pair 
$(S,S')\in\Sigma\times\Sigma$ with 
$S'\leq S$, any sequence
of points $p_{n}\in S $ with $\lim_{n\to\infty} p_{n}=p\in S'$
and any sequence of multivectors
$\xi_{n}\in \wedge^k T_{p_{n}} S$ with
$$\lim_{n\to\infty}\xi_{n}=\xi \in {\wedge}^k T_{p} S'$$ we have
$$\lim_{n\to\infty} \omega(p_n;\xi_{n})= \omega(p;\xi). $$
\end{lem}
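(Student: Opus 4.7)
The plan is to deduce this directly from the two pieces of data built into the definition of a stratified bounded form: (i) the graph of $\omega$ is closed in $\wedge^k TX \times \R$, and (ii) each $\omega_S$ is uniformly bounded by a constant $C$. These together give a closed-graph-plus-compactness argument.

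First I would observe that the candidate limit point lies in the domain of $\omega$: since $p \in S'$, we have $T_p X = T_p S'$ by the definition of the tangent bundle of $(X,\Sigma)$, so $(p,\xi) \in \wedge^k TX$. Next, I would note that the sequence $t_n := \omega(p_n;\xi_n) = \omega_S(p_n;\xi_n)$ is bounded in $\R$: indeed, the boundedness hypothesis gives $|\omega_S|\leq C$ as a pointwise norm on $k$-forms, hence $|t_n| \leq C\, |\xi_n|$, and $|\xi_n|$ is bounded because $\xi_n \to \xi$.

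Now the core step. Pick any subsequence $t_{n_j}$ converging to some $L \in \R$; such a subsequence exists by Bolzano--Weierstrass. The triples $(p_{n_j},\xi_{n_j},t_{n_j})$ lie in the graph of $\omega$ and converge in $\wedge^k TX \times \R$ to $(p,\xi,L)$, because $(p,\xi)$ belongs to $\wedge^k TX$ as shown above and because $\wedge^k TX$ carries the subspace topology from $\wedge^k(T\R^n)$. Since the graph is closed in $\wedge^k TX \times \R$ by the definition of a stratified form, we conclude $L = \omega(p;\xi) = \omega_{S'}(p;\xi)$.

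Thus every convergent subsequence of $(t_n)$ has the same limit $\omega(p;\xi)$, so a standard argument forces the whole bounded sequence $(t_n)$ to converge to $\omega(p;\xi)$. The only real point to check carefully is that the limit pair $(p,\xi)$ genuinely lies in $\wedge^k TX$, since without this the closed-graph property cannot be invoked; this is the content of the hypothesis $\xi \in \wedge^k T_p S'$ rather than merely $\xi \in \wedge^k T_p S$. No genuine obstacle arises.
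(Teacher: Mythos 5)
Your argument is correct and matches the paper's own proof: both use boundedness to extract a convergent subsequence and then the closed-graph hypothesis to identify the limit as $\omega(p;\xi)$. The only difference is that you spell out the final "every subsequential limit agrees, hence the whole bounded sequence converges" step, which the paper leaves implicit.
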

\begin{proof}
Since $\omega$ is bounded there exists a subsequence $(p_{n_j},\xi_{n_j})$
and a real number $a$ such that 
$$\lim_{j\to\infty} \omega(p_{n_j};\xi_{n_j})= a. $$
But since the graph of $\omega$ is closed, it follows that $a=\omega(p,\xi)$.
\end{proof}

\begin{prop}\label{wedge_product}
Suppose that $(X,\Sigma)$ is a stratified set in $\R^n$, $(\alpha,\Sigma)$ and $(\beta,\Sigma)$
are smooth and bounded stratified forms of degree $k$ and $l$ respectively. Then
$\alpha\wedge\beta$ is a smooth and bounded stratified form of degree $k+l$. 
\end{prop}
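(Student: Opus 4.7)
The plan is to set $(\alpha\wedge\beta)_S := \alpha_S\wedge\beta_S$ for each $S\in\Sigma$ and check the three conditions in the definition of a smooth bounded stratified form: stratum-wise smoothness, a uniform bound, and closedness of the graph of $\alpha\wedge\beta:\wedge^{k+l}TX\to\R$. Smoothness of $\alpha_S\wedge\beta_S$ is automatic, and the pointwise estimate $|\alpha_S\wedge\beta_S|\le C_{k,l}\,|\alpha_S|\,|\beta_S|$ propagates the uniform bound. The only substantive content is the closed-graph property.

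To establish it I will take a sequence $(p_n,\xi_n)\to(p,\xi)$ in $\wedge^{k+l}TX$ with $(\alpha\wedge\beta)(p_n;\xi_n)\to a$ and show $a=(\alpha\wedge\beta)(p;\xi)$. By finiteness of $\Sigma$ I extract so that $p_n\in S$ for a fixed stratum $S$, forcing $p\in S'\le S$. Compactness of $\G^{d_S}_n$ lets me further extract so that $T_{p_n}S\to T$ in the Grassmannian, whence $\xi\in\wedge^{k+l}T$ as a limit of elements of $\wedge^{k+l}T_{p_n}S$; and since by hypothesis $\xi\in\wedge^{k+l}T_pS'$ as well, the elementary identity $\wedge^{k+l}T\cap\wedge^{k+l}T_pS'=\wedge^{k+l}(T\cap T_pS')$ inside $\wedge^{k+l}\R^n$ gives $\xi\in\wedge^{k+l}(T\cap T_pS')$. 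Set $d'':=\dim(T\cap T_pS')$.

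I then choose orthonormal bases $(e_i^n)_{i=1}^{d_S}$ of $T_{p_n}S$ converging to an orthonormal basis $(e_i^\infty)$ of $T$ arranged so that $(e_1^\infty,\dots,e_{d''}^\infty)$ spans $T\cap T_pS'$. Expanding $\xi_n=\sum_{|I|=k+l}c_I^n\,e_I^n$, the inclusion $\xi\in\wedge^{k+l}(T\cap T_pS')$ forces $c_I^n\to 0$ for $I\not\subset\{1,\dots,d''\}$, while $c_I^n$ converges to the coordinate of $\xi$ on $e_I^\infty$ otherwise. The pointwise wedge-product formula
$$(\alpha\wedge\beta)(p_n;e_I^n)=\sum_{J\sqcup K=I}\mathrm{sgn}(J,K)\,\alpha(p_n;e_J^n)\,\beta(p_n;e_K^n)$$
reduces everything to limits of $\alpha$ and $\beta$ on individual basis multivectors. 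For $I\subset\{1,\dots,d''\}$, every $e_J^n\to e_J^\infty\in\wedge^kT_pS'$ and $e_K^n\to e_K^\infty\in\wedge^lT_pS'$, so Lemma \ref{cont_property} applied separately to $\alpha$ and $\beta$ delivers $(\alpha\wedge\beta)(p_n;e_I^n)\to(\alpha\wedge\beta)(p;e_I^\infty)$; for the remaining $I$ the factor $(\alpha\wedge\beta)(p_n;e_I^n)$ is bounded by the $L^\infty$ estimates on $\alpha$ and $\beta$, while $c_I^n\to 0$ kills the contribution. Summing over $I$ yields $a=(\alpha\wedge\beta)(p;\xi)$.

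The main obstacle is the dimension mismatch $d_S\ge d''$: without Whitney's condition (a) one has no automatic inclusion $T_pS'\subset T$, so the right subspace to single out in the basis argument is the intersection $T\cap T_pS'$. The crucial observation that makes the limit go through is that $\xi$ is forced to sit in $\wedge^{k+l}(T\cap T_pS')$ — it lies in $\wedge^{k+l}T$ as a limit in the tangent bundle of $S$ and simultaneously in $\wedge^{k+l}T_pS'$ as a point of $\wedge^{k+l}TX$ above $p\in S'$ — which localizes the computation to precisely the directions along which Lemma \ref{cont_property} is applicable.
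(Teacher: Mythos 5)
Your proof is correct and follows essentially the same route as the paper's: the wedge representation formula yields the uniform bound, and closedness of the graph is established by passing to subsequences, choosing convergent orthonormal bases of $T_{p_n}S$, and combining Lemma \ref{cont_property} with the $L^\infty$ bounds. Your version is in fact a bit tighter than the printed one: in the general (non-decomposable) case the paper writes $\sum_I a^I(\alpha\wedge\beta)(x;\xi^I)=(\alpha\wedge\beta)(x;\xi)$ without addressing whether each limiting basis multivector $\xi^I$ lies in $\wedge^{k+l}T_xS'$ (so that the right-hand evaluation is even meaningful), whereas your observation that $\xi$ is forced into $\wedge^{k+l}(T\cap T_pS')$, together with the basis of $T=\lim T_{p_n}S$ adapted to $T\cap T_pS'$, cleanly separates the multi-indices where Lemma \ref{cont_property} applies (namely $I\subset\{1,\dots,d''\}$) from those whose coefficients $c^n_I$ tend to zero and are killed by boundedness.
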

\begin{proof}
First let us recall that if $S$ is a smooth manifold, $v_1,\dots,v_{k+l}$ are some vectors in $T_x{S}$, $\alpha\in \Omega^{k}(S)$ and $\beta\in \Omega^{l}(S)$ then we have the following 
representation formula
\begin{eqnarray}\label{wedge_rep_form}
&&\\ \nonumber
(\alpha\wedge\beta)(x;v)   = 
\sum_{\sigma} sgn(\sigma) \alpha(x;v_{\sigma(1)}\wedge\dots\wedge v_{\sigma(k)})
\beta(x; v_{\sigma(k+1)}\wedge\dots\wedge v_{\sigma(l+k)})
\end{eqnarray}
where $\sigma$ varies over all permutations of $\{1,\dots,k+l\}$, 
$sgn(\sigma)$ is $1$ if $\sigma$ is even and $(-1)$ if $\sigma$ is odd and
$v=v_1\wedge\dots\wedge v_{k+l}$.
It follows from (\ref{wedge_rep_form}) that there exists an universal constant $C > 0$ such that
$$ |\alpha\wedge\beta|\leq C|\alpha||\beta| .$$
%
%
%
Next we show that the graph of $\alpha\wedge\beta$ is closed.
Observe that if $\xi_1\wedge\dots\wedge\xi_m\in \wedge^{m}(T_x X)$ is of size $1$, then  
there exists an orthonormal set $\{v_1,\dots,v_m \}$ in $T_x X$  such that
$\xi_1\wedge\dots\wedge\xi_m=v_1\wedge\dots\wedge v_m$.

Let $S$ and $S'$ be two strata of $\Sigma$  such that $S'\leq S$.
Suppose that $(x_m;\xi_m)\in \wedge^{k+l}(TS)$ is a sequence that 
tends to $(x;\xi)\in \wedge^{k+l}(TS')$. We have to show that if $(\alpha\wedge\beta)(x_m;\xi_m)$ is convergent then the limit is $(\alpha\wedge\beta)(x;\xi)$.

First assume that $\xi_m=A_m\xi^1_m\wedge\dots\wedge\xi^{k+l}_m$. We may assume, by the observation above, that $\{ \xi^j_m: j=1,\dots,k+l\}$ is an orthonormal set for each $m\in\N$. There exists an
increasing sequence of natural numbers $m_t$, $t\in \N$
such that $\xi^j_{m_t}\to \xi^j$ and $A_{m_t}\to A$ as $t\to\infty$ for $j=1,\dots,k+l$. 
By uniqueness of the limit we clearly have $\xi=A\xi^1\wedge\dots\wedge\xi^{k+l}$.
We may assume without loss of generality that $m_t=t$. 
Using formula (\ref{wedge_rep_form}) we obtain
\begin{eqnarray*}
(\alpha\wedge\beta)(x_m;\xi_m) &=&
A_m\sum_{\sigma} sgn(\sigma) \alpha(x_m;\xi_m^{\sigma(1)}\wedge\dots\wedge\xi_m^{\sigma(k)})
\times\nonumber\\
&\times&\beta(x_m;\xi_m^{\sigma(k+1)}\wedge\dots\wedge\xi_m^{\sigma(l+k)}).
\nonumber
\end{eqnarray*}
By letting $m\to\infty$ we obtain 
\begin{eqnarray*}
\lim_{m\to\infty}(\alpha\wedge\beta)(x_m;\xi_m) &=&
A\sum_{\sigma} sgn(\sigma) \alpha(x;\xi^{\sigma(1)}\wedge\dots\wedge\xi^{\sigma(k)})
\times\nonumber\\
&\times&\beta(x;\xi^{\sigma(k+1)}\wedge\dots\wedge\xi^{\sigma(l+k)})
\\&=&
(\alpha\wedge\beta)(x;\xi).
\nonumber
\end{eqnarray*}
For the general case, suppose that $\xi_m=\sum_I a^I_m \xi^I_m $ with $\{\xi^i_m: i=1,\dots,\dim T_{x_m} X) \}$ an orthonormal set for all $m\in \N$. By choosing a subsequence, similarly 
to the argument above,
we may assume that $a^I_m\to a^I$ and $\xi^I_m\to\xi^I$ as $m\to\infty$ for each $I=(i_1,\dots,i_{k+l})$.
Thus we obtain
\begin{eqnarray*}
\lim_{m\to\infty}(\alpha\wedge\beta)(x_m;\xi_m) 
&=&
\lim_{m\to\infty} \sum_I a_m^I (\alpha\wedge\beta)(x_m;\xi^I_m) 
\nonumber  \\ &=&
\sum_I a^I (\alpha\wedge\beta)(x;\xi^I) 
\\&=&  (\alpha\wedge\beta)(x;\xi) .
\nonumber
\end{eqnarray*}

\end{proof}

\begin{df}\label{semi_diff_strat}
Let $(X,\Sigma)$ and $(Y,\Sigma')$ be two stratified sets in $\R^n$. A continuous 
map $h: X \to Y$ is {\bf semi-differentiable} (see [MT]) if
\begin{enumerate}
\item
For any $S\in \Sigma$ there is a stratum $S'\in\Sigma'$ such that $h$ maps $S$ into $S'$ in a smooth way.
\item
The differentials $dh_i$, $i=1,\dots,n$ of the components of $h$ are smooth stratified 
forms on $(X,\Sigma)$.
\end{enumerate}
\end{df}

\begin{rk}
Semi-differentiability implies Lipschitzness with respect to the inner metric.
\end{rk}
In the following proposition we prove that composition of semi-differential maps is
a semi-differential map.
\begin{prop}\label{comp_semi_diff}
Suppose that $(X,\Sigma_X), (Y,\Sigma_Y)$ and $(Z,\Sigma_Z)$ are stratified subsets of $\R^n$. Let $f:X\to Y$ be a semi-differentiable map with respect to $\Sigma_X$ and $g:Y\to Z$ be a semi-differentiable map with respect to $\Sigma_Y$. Then, 
the composition $g\circ f:X\to Z$ is semi-differentiable with respect to $\Sigma_X$ and $\Sigma_Z$.
\end{prop}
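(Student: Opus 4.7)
The plan is to verify both conditions of semi-differentiability for $g\circ f$ by combining the corresponding conditions for $f$ and $g$ via the chain rule, and then to check that the chain-rule formula respects graph-closedness across adjacent strata.

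First I would handle condition (1). Given a stratum $S\in\Sigma_X$, semi-differentiability of $f$ produces a stratum $S_Y\in\Sigma_Y$ with $f(S)\subset S_Y$ and $f|_S:S\to S_Y$ smooth; likewise there is $S_Z\in\Sigma_Z$ with $g(S_Y)\subset S_Z$ and $g|_{S_Y}$ smooth. Hence $(g\circ f)(S)\subset S_Z$ and, being a composition of smooth maps between smooth manifolds, $(g\circ f)|_S$ is smooth. For the second condition, I would apply the chain rule on each stratum $S$:
\[
d(g_i\circ f)(x;v)=dg_i\bigl(f(x);\,df(x)(v)\bigr),
\]
where $df(x)(v)=\bigl(df_1(x;v),\dots,df_n(x;v)\bigr)\in T_{f(x)}S_Y$. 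This shows each $d(g_i\circ f)$ is smooth on every stratum of $\Sigma_X$, so it only remains to verify that its graph is closed in $\wedge^1 TX\times\R$.

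For the graph closedness, consider strata $S'\leq S$ in $\Sigma_X$, a sequence $x_m\in S$ with $x_m\to x\in S'$, vectors $v_m\in T_{x_m}S$ with $v_m\to v\in T_x S'$, and assume $d(g_i\circ f)(x_m;v_m)\to a$. I would first observe that strata are transported compatibly: if $f(S)\subset S_Y$ and $f(S')\subset S'_Y$, then $f(x)\in S'_Y\cap\overline{S_Y}$, and since $\overline{S_Y}$ is a union of strata of $\Sigma_Y$, necessarily $S'_Y\subset\overline{S_Y}$, i.e.\ $S'_Y\leq S_Y$; an analogous statement holds downstream for $g$. Next, the graph closedness of each $df_j$ (built into the hypothesis on $f$) yields $df_j(x_m;v_m)\to df_j(x;v)$, so $(f(x_m),df(x_m)(v_m))\to(f(x),df(x)(v))$ in $TY$, with the limit lying in $T_{f(x)}S'_Y$. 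Applying the graph closedness of $dg_i$ to the sequence $dg_i(f(x_m);df(x_m)(v_m))=d(g_i\circ f)(x_m;v_m)\to a$ then forces $a=dg_i(f(x);df(x)(v))=d(g_i\circ f)(x;v)$, as required.

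The main obstacle is the bookkeeping of how the frontier relation $S'\leq S$ on $X$ is transported through $f$ to yield $S'_Y\leq S_Y$ on $Y$; without this, the target sequence $f(x_m)$ does not accumulate at a stratum lying ``below'' the one it sits on, and one cannot invoke the graph closedness of $dg_i$ across the correct pair of strata. Everything else is a straightforward chain-rule computation followed by two successive applications of graph closedness.
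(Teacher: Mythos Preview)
Your proposal is correct and follows essentially the same route as the paper's proof: verify condition (1) directly, then use the chain rule together with two successive applications of graph closedness for condition (2). Your extra care in checking that the frontier relation $S'\leq S$ is transported to $S'_Y\leq S_Y$ is more detail than the paper provides (the paper simply asserts the convergences without this bookkeeping), but the underlying argument is identical.
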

\begin{proof}
The verification of the first part of Definition \ref{semi_diff_strat} is obvious, so we go directly to the proof of the second part.

Suppose that $S'\leq S$ are two strata in $\Sigma_X$. Let $(x_n;\xi_n)\in T_{x_n} S$ be 
a sequence converging to $(x;\xi)\in T_{x} S'$. 
Note that by the chain rule we have 
$$ d(g\circ f)(x_n;\xi_n) = dg (f(x_n);df(x_n;\xi_n)).$$
Since $f$ is semi-differentiable $(f(x_n);df(x_n;\xi_n))$ converges to $(f(x),df(x;\xi))$
and since $g$ is semi-differentiable, it follows that  $(g(f(x_n)); dg (f(x_n);df(x_n;\xi_n)))$ converges to
$(g(f(x)),dg(f(x);df(x;\xi)))$.
\end{proof}

\subsection{$\mathcal L^\infty$ Category}
We introduce here the category '$\mathcal L ^\infty$' in which we work.
The objects of $\mathcal L ^\infty$ are sets. A morphism, or an $L^\infty$ map, between two objects $X$ and $Y$ is a map $f : X \to Y$ such that there exists a stratification $\Sigma_X$ of $X$ and a stratification
$\Sigma_Y$ of $Y$ with respect to which $f$ is a semi-differentiable map. 
It follows from Lemma \ref{comp_semi_diff} that composition of morphisms is
a morphism. Now it is clear that the latter indeed defines a category.\\


\begin{prop}\label{lip_infty}
Let $X\subset\R^n$, $Y\subset\R^m$ then the map $f:X\to Y$ is Lipschitz if and only if $f$ is an $L^\infty$ morphism.
\end{prop}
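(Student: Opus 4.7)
The plan is to establish both implications, relying on standard tools from semialgebraic geometry.

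For the direction $L^\infty$ morphism $\Rightarrow$ Lipschitz, suppose $f$ is semi-differentiable with respect to stratifications $\Sigma_X$ and $\Sigma_Y$. The first (and main) step is to show that each $df_i$ is bounded on every stratum $S \in \Sigma_X$. I would proceed by contradiction: if $df_i$ were unbounded on some $S$, the semialgebraic curve selection lemma would produce a semialgebraic arc $\gamma$ in $S$ converging to a point $p$ of a boundary stratum $S' \leq S$, along which $|df_i(\gamma(t), \gamma'(t))|$ blows up. Integrating $df_i$ along $\gamma$ would then force $f \circ \gamma$ to have unbounded variation, contradicting the continuity of $f$ at $p$. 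Once this boundedness is obtained, a uniform bound on each stratum together with Lemma \ref{cont_property} gives Lipschitzness within and across strata, recovering the content of the remark preceding the proposition; passing from the inner to the outer Lipschitz constant uses the fact that on semialgebraic sets the inner and outer distances are locally equivalent.

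For the direction Lipschitz $\Rightarrow$ $L^\infty$ morphism, assume $f : X \to Y$ is Lipschitz with constant $L$. I would invoke the existence of compatible Whitney stratifications for semialgebraic maps to produce stratifications $\Sigma_X$ of $X$ and $\Sigma_Y$ of $Y$ such that $f$ sends each stratum of $\Sigma_X$ smoothly into some stratum of $\Sigma_Y$. Condition~(1) of Definition \ref{semi_diff_strat} is then built in, and the Lipschitz hypothesis immediately yields $|df_i| \leq L$ on each stratum. The remaining point is the closed-graph condition promoting the differentials to stratified forms: for a sequence $(p_n, v_n) \in TS$ with $p_n \to p \in S' \leq S$ and $v_n \to v \in T_p S'$, Whitney's condition (a) forces the tangent planes $T_{p_n} S$ to accumulate on subspaces containing $T_p S'$, and a semialgebraic curve-selection argument then shows $df_i(p_n; v_n) \to df_i(p; v)$.

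The main obstacle is the second direction. It is not enough to know that $f$ is smooth on each stratum with bounded differential; one must choose the stratification so that these differentials assemble into stratified forms with closed graphs across strata, and this is precisely what Whitney's regularity conditions are designed to guarantee in the semialgebraic category. The curve-selection step verifying the closed graph in the presence of only a Lipschitz (rather than smooth) bound is the technical heart of the argument.
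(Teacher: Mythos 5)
Your argument for the implication ``$L^\infty$ morphism $\Rightarrow$ Lipschitz'' has two flaws. The mechanism you propose for bounding $df_i$ does not work: an unbounded $|df_i|$ along a semialgebraic arc $\gamma$ converging to a boundary point does \emph{not} force $f\circ\gamma$ to have unbounded variation ($f(x)=\sqrt{x}$ on $(0,1]$ has unbounded derivative yet is continuous and monotone at $0$). Boundedness of $df_i$ near lower strata in fact follows directly from the closed-graph requirement in the definition of a stratified form: if $|df_i(x_n;\xi_n)|\to\infty$ with $|\xi_n|=1$ and $x_n\in S\to x\in S'$, then $\xi'_n:=\xi_n/|df_i(x_n;\xi_n)|\to 0$ gives $(x_n,\xi'_n)\to(x,0)$ while $df_i(x_n;\xi'_n)=\pm 1\not\to 0=df_i(x;0)$, so the graph would not be closed. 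Separately, the passage from inner to outer Lipschitz that you invoke via ``on semialgebraic sets the inner and outer distances are locally equivalent'' is false in general (the semicubical cusp $\{y^2=x^3\}$ already defeats it). The paper simply cites the earlier remark for this direction and does not use that equivalence.

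The deeper gap is in the implication ``Lipschitz $\Rightarrow L^\infty$.'' You take a Whitney~(a) stratification of $X$, note $|df_i|\leq L$ on strata, and propose to obtain the closed-graph condition for $df_i$ from Whitney~(a) on $X$ plus a curve-selection argument. This cannot work as stated: Whitney~(a) on $X$ only controls the limits of the tangent planes $T_{p_n}S$ and carries no information about the behaviour of $df_i$ restricted to those planes, because the differentials are data not encoded by the stratification of $X$. The paper's actual argument (Lemma~\ref{lem graph a reg hor c1} and Corollary~\ref{Lip_semi_diff}) is to choose, for each component $\xi=f_i$, a Whitney~(a) stratification of the \emph{graph} $\Gamma_\xi(X)\subset\R^n\times\R$ and project it down to $X$. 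Because $\xi$ is Lipschitz, each tangent space $T_{\gamma_\xi(q)}\gamma_\xi(S)$ projects bijectively onto $T_qS$; Whitney~(a) \emph{on the graph} forces any limit $\tau$ of these spaces to contain $T_{\gamma_\xi(p)}\gamma_\xi(S')$, and since $\tau$ is still a graph over $\R^n$, the unique vector of $\tau$ projecting to $u\in T_pS'$ is $(u;d_p\xi|_{S'}\cdot u)$, yielding $d\xi(p_n;u_n)\to d\xi(p;u)$ by linear algebra rather than by curve selection. Whitney~(a) on $X$ alone is not sufficient, and the ``semialgebraic curve-selection step'' you leave as the technical heart would, if carried out, amount to rediscovering the graph stratification.
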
 
The proof of this proposition is in Section 4.

Now we can define the notion of an $L^\infty$ form that fits well into the setting of $\mathcal L ^\infty$ category.

\begin{df}\label{l_infty_class}
Let $X$ be a set and define an equivalence relation on $\widetilde \Omega^k(X)$ 
$$ (\omega,\Sigma) \approx (\omega',\Sigma') $$
if there exists a stratification $\Sigma''$ that refines both $\Sigma$ and $\Sigma'$ such that
$(\omega|_{\Sigma''},\Sigma'')=(\omega'|_{\Sigma''},\Sigma'')$.
Denote by $\Omega_\infty^k (X) $ the classes of equivalence of $'\approx'$. An element of 
$\Omega_\infty^k (X)$ is called an {\bf $\bf L^\infty$ form}.
\end{df}

\begin{rk}
The exterior algebra structure is defined on $\Omega^{\bullet}_\infty (X):=\cup_k \Omega_\infty^k(X)$ in a natural way. The sum of two $L^\infty$ forms $\omega$ and $\omega'$ is an $L^\infty$ form $\omega''$ that can be constructed as follows. If $(\omega,\Sigma)$ and $(\omega,\Sigma')$ represent $\omega$ and $\omega'$ then $(\omega+\omega',\Sigma'')$ represents $\omega''$ where $\Sigma''$ is any stratification refining both $\Sigma$ and $\Sigma'$. The exterior product is defined in a similar fashion.
\end{rk}

\subsection*{Pull backs of $L^\infty$ forms}
\begin{prop} \label{prop_pull_back}
Let $f: X \to Y$ be an $L^\infty$ map and $\omega\in\Omega^k_\infty(Y)$. 
For any stratification $\Sigma_X$ of $X$ and $\Sigma_Y$ of $Y$ for which
$f$ is semi-differentiable and $(\omega,\Sigma_Y)$ is a stratified form,
the form defined by $f|_S^* \omega$ on each $S\in\Sigma_X$ defines a unique $L^\infty$ form
which is called {\bf the  pullback of $\omega$} by $f$ and denoted by  $f^*\omega$.
\end{prop}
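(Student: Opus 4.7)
The plan is to break the claim into four sub-claims and treat them in order: (a) on each stratum of $\Sigma_X$, the stratumwise pullback is a well-defined smooth form; (b) the resulting family is a bounded stratified form with closed graph on $(X,\Sigma_X)$; (c) it admits a stratified exterior derivative, hence lies in $\widetilde\Omega^k(X)$; (d) the $L^\infty$ class in $\Omega^k_\infty(X)$ is independent of the admissible stratifications chosen. Claim (a) is immediate from Definition \ref{semi_diff_strat}(1): $f|_S$ is smooth into some $T\in\Sigma_Y$ and $\omega_T$ is smooth, so $f|_S^*\omega_T$ is a smooth $k$-form on $S$.

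For (b), I would use the pointwise formula
\[
(f|_S^*\omega)(x;v_1\wedge\cdots\wedge v_k) = \omega\bigl(f(x);\,df(x)v_1\wedge\cdots\wedge df(x)v_k\bigr).
\]
Boundedness follows from the boundedness of $\omega$ together with the boundedness of $df$, the latter being a consequence of Proposition \ref{lip_infty} since $f$ is Lipschitz. For the closed graph, take strata $S''\leq S$ in $\Sigma_X$ with corresponding strata $T''\leq T$ in $\Sigma_Y$ (where $f(S)\subset T$, $f(S'')\subset T''$; continuity of $f$ forces $T''\leq T$). Given $(x_n;\xi_n)\in\wedge^k TS$ converging to $(x;\xi)\in\wedge^k TS''$, a first application of Lemma \ref{cont_property} to each component of $df$ yields $df(x_n)(\xi_n)\to df(x)(\xi)$, the limit lying in $\wedge^k T_{f(x)}T''$; a second application of Lemma \ref{cont_property} to the bounded stratified form $\omega$ on $(Y,\Sigma_Y)$ then gives
\[
(f|_S^*\omega)(x_n;\xi_n)\ \longrightarrow\ (f|_{S''}^*\omega)(x;\xi),
\]
which is the closed-graph property.

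Claim (c) follows from (b) applied to $d\omega\in\widetilde\Omega^{k+1}(Y)$: on each stratum the classical identity $d(f|_S^*\omega)=f|_S^*(d\omega)$ holds, and the right-hand side is a bounded stratified form by the same argument, so it serves as the stratified exterior derivative of $f^*\omega$. For (d), given two admissible pairs $(\Sigma_X,\Sigma_Y)$ and $(\Sigma_X',\Sigma_Y')$, I take a common refinement $\Sigma_Y''$ of $\Sigma_Y,\Sigma_Y'$ (on which $\omega$ is still stratified by canonical restriction) and then a stratification $\Sigma_X''$ refining both $\Sigma_X,\Sigma_X'$ and compatible with $f^{-1}(\Sigma_Y'')$, which exists by standard semialgebraic refinement. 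The map $f$ remains semi-differentiable with respect to $(\Sigma_X'',\Sigma_Y'')$: condition (1) is automatic from compatibility, and condition (2) holds because refinement preserves the closed-graph property of the stratified forms $df_i$. On each stratum of $\Sigma_X''$ the pullback depends only on $f(x)$ and $df(x)$, so the two candidate stratified forms coincide there and define the same class in $\Omega^k_\infty(X)$.

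The step I expect to require the most care is the closed-graph argument in (b). The rank of $df(x)$ typically drops when passing from an open stratum $S$ to a boundary stratum $S''$, so $df(x)(\xi)$ may lie in a strictly smaller tangent space than the $df(x_n)(\xi_n)$; making the two successive applications of Lemma \ref{cont_property} interlock correctly — first at the level of $df$ to land in $\wedge^k T_{f(x)}T''$, and only then at the level of $\omega$ — is precisely what semi-differentiability was designed to encode.
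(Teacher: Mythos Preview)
Your argument is correct and follows essentially the same route as the paper: you verify the closed-graph property of the stratumwise pullback via semi-differentiability of $f$ combined with the closed-graph property of $\omega$, and you establish independence of stratifications by passing to a common refinement. Your treatment is in fact more complete than the paper's, since you explicitly address point (c) (that $d(f^*\omega)=f^*(d\omega)$ is again bounded and stratified, so $f^*\omega$ genuinely lies in $\widetilde\Omega^k(X)$) and you justify boundedness of $df$ via Proposition~\ref{lip_infty}, both of which the paper leaves implicit.
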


\begin{proof}
We have to show that $f^*\omega$ is well defined and $f^*\omega$ is
an $L^\infty$ form. Let $\Sigma_X$ and $\Sigma_Y$ be stratifications of $X$ and $Y$ respectively such that \\

(*) $f$ is semi-differentiable with respect to $\Sigma_X$ and $\Sigma_Y$ and $(\omega,\Sigma_Y)$ is a stratified form.\\

We have to show that the graph of $f^*\omega$ is closed and the class of $\eta:=(f^*\omega,\Sigma_X)$ is independent of the choices of $\Sigma_X$ and $\Sigma_Y$. 
First we show that the graph of $f^*\omega$ is closed. Suppose that $X\subset \R^l$ and $Y\subset \R^m$. Let $S'\leq S$ be two strata in $\Sigma_X$ and $(x_n;\xi_n)\in \wedge^k(TS)$ be a sequence converging to  $(x;\xi)\in\wedge^k(TS')$.
Suppose that $f|_S^*\omega(x_n;\xi_n)$ is convergent. 
We may assume, by possibly choosing a subsequence, that $(f(x_n),df|_S(x_n;\xi_n)\to (f(x),\xi)$. 
Since $f$ is semi-differentiable, it follows that 
$f|_S^*\omega(x_n;\xi_n)\to f|_{S'}^*\omega(x;\xi)$ as \mbox{$n\to\infty$}.
%
%
To see that $f^*\omega$ is independent of the chosen stratifications, let $\Sigma'_X$ and $\Sigma'_Y$ be another stratifications of $X$ and $Y$ satisfying (*) and denote by $\eta':=(f^*\omega,\Sigma'_X)$  the form obtained from $f|_S^*\omega$ for any $S\in \Sigma'_X$.
Let $\Sigma''_X$  be a common refinement of $\Sigma_X$ and $\Sigma'_X$, and  let $\Sigma''_Y$ be a common refinement of $\Sigma_Y$ and $\Sigma'_Y$. Let $\eta'':=(f^*\omega,\Sigma''_X)$ then clearly $\eta\approx\eta''\approx\eta'$.
\end{proof}

\subsection*{Integration of $L^\infty$ forms}
{$\ $}\vspace*{2mm}$\ $\\
Let $X\subset\R^n$ and $A\subset\R^k$ be $k$-dimensional oriented compact semialgebraic sub-manifold with corners of $\R^k$, where $k\leq n$ and the orientation of $A$ is induced by the standard orientation of $\R^k$. Let $\sigma:A\to X$ be a map. We want to define the integral of an $L^\infty$ $k$-form $\omega$ over $\sigma$.
Let $\Sigma_X$ and $\Sigma_A$ be stratifications of $X$ and $A$ respectively such that $(\sigma^*\omega,\Sigma_A)$ is a stratified form.
The integral of $\omega$ over $\sigma$ is defined by 
\begin{equation} \label{def_int}
\int_\sigma \omega := \sum_{S\in\Sigma_A^k} \int_S \sigma_S^*\omega\ .  
\end{equation}

\begin{prop}
Let $X$, $\omega$, $A$ and $\sigma$ be as in the above paragraph then the integral of $\omega$ over $\sigma$, as defined
in (\ref{def_int}), is well defined i.e. independent of the stratifications.
\end{prop}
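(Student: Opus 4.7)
The plan is to reduce the problem to the case where one stratification refines the other, and then invoke the additivity of the Lebesgue integral together with the fact that lower-dimensional semialgebraic subsets have $k$-dimensional measure zero.

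First I would pick two valid pairs of stratifications $(\Sigma_X,\Sigma_A)$ and $(\Sigma'_X,\Sigma'_A)$. Let $\Sigma''_X$ be a common refinement of $\Sigma_X$ and $\Sigma'_X$, and $\Sigma''_A$ a common refinement of $\Sigma_A$ and $\Sigma'_A$ chosen so that $\sigma$ is semi-differentiable with respect to $\Sigma''_A$ and $\Sigma''_X$ (such refinements exist since semi-differentiability is preserved when passing to finer stratifications, and because the pullback is canonically induced by refinement as in Proposition \ref{prop_pull_back}). It then suffices to check that the sum defining the integral is preserved under refinement, i.e. to show
\[
\sum_{S\in\Sigma_A^k}\int_S \sigma_S^*\omega \;=\; \sum_{T\in\Sigma''^{\,k}_A}\int_T \sigma_T^*\omega,
\]
and symmetrically for $\Sigma'_A$; transitivity then yields equality between the two original sums.

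For the refinement step, I would fix a top stratum $S\in\Sigma_A^k$ and observe that the union of the strata of $\Sigma''_A$ contained in $S$ equals $S$. The strata of $\Sigma''_A$ contained in $S$ that are of dimension strictly less than $k$ form a semialgebraic subset of dimension $\leq k-1$, hence have $k$-dimensional Hausdorff measure zero in the smooth $k$-manifold $S$. Thus
\[
\int_S \sigma_S^*\omega \;=\; \sum_{\substack{T\in\Sigma''^{\,k}_A\\ T\subset S}} \int_T \sigma_T^*\omega,
\]
by countable additivity of the integral, using that on each such $T$ the restriction of the form coming from $\Sigma_A$ coincides with the one coming from $\Sigma''_A$ (this is the canonical structure induced on a refinement). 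Summing over $S\in\Sigma_A^k$ and noting that every top stratum of $\Sigma''_A$ is contained in a unique top stratum of $\Sigma_A$ yields the claim.

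The main thing to verify carefully is that each integral $\int_S \sigma_S^*\omega$ is finite, so that the rearrangement of the sum is legitimate. This follows from boundedness: since $(\omega,\Sigma_X)\in\widetilde\Omega^k(X)$ the form $\omega$ is bounded on strata of $\Sigma_X$, and since $\sigma$ is semi-differentiable the differential $d\sigma$ is bounded on strata of $\Sigma_A$ (by Lemma \ref{cont_property} applied to the bounded stratified forms $dh_i$ from Definition \ref{semi_diff_strat}); hence $\sigma_S^*\omega$ is a bounded measurable form on $S$, and $S\subset A$ has finite $k$-dimensional measure because $A$ is compact. The only potential obstacle is making sure the two originally chosen pairs admit a common pair of refinements compatible with semi-differentiability, but this is immediate from Proposition \ref{comp_semi_diff} combined with the fact that any two semialgebraic stratifications admit a common semialgebraic refinement.
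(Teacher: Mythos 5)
Your proof is correct and follows essentially the same route as the paper's: reduce to the case where one stratification refines the other by taking common refinements, then observe that the union of lower-dimensional strata inside a top-dimensional stratum is a semialgebraic set of $k$-dimensional Hausdorff measure zero, so the integral decomposes additively over the top strata of the refinement. The only additions beyond what the paper records explicitly are your verification that the integrals are finite (justifying the rearrangement) and that common refinements compatible with semi-differentiability exist, which the paper takes for granted.
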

\begin{proof}
It is enough to prove that for any $\Sigma'_A\prec\Sigma_A$ 
$$\sum_{S'\in\Sigma'^k_A} \int_{S'} \sigma_{S'}^*\omega = \sum_{S\in\Sigma_{A}^k} \int_{S} \sigma_S^*\omega\ , $$ 
since if $\Sigma''_A$ is a different stratification 
then set
$\Sigma'_A$ to be a common refinement of $\Sigma_A$ and $\Sigma''_A$.
For any $k$-stratum $S\in\Sigma^k_A$ we can find $S'_1,...,S'_l\in \Sigma'^k_{A}$ such that $\bar S = \overline {\cup_j S'_j}$.
The set  $\bar S \setminus \cup_j S'_j$ is a semialgebraic set of dimension smaller than $k$ and therefore of Hausdorff $k$ dimensional measure $0$. Hence
$$\sum_{j=1}^l \int_{S'_j} \sigma_{S'_j}^*\omega =  \int_{S} \sigma_S^*\omega\ . $$

\end{proof}

\subsection*{Integration over Chains}
\begin{dfs}
Let $X\subset\R^n$. Define $C_k(X)$ to be the chain complex generated by semialgebraic continuous singular simplices, or simply singular simplices, $\sigma:\Delta^k\to X$, where $\Delta^k$ is the standard $k$ simplex in $\R^k$. Set $C^k(X):=\text{Hom}(C_k(X),\R)$ to be the complex of cochains with differential $d:=\partial^*$.

Let $c=\sum_{j=1}^L a_j\sigma_j \in C_k(X)$, where $\sigma_j$ is a singular simplex, $a_j\in\R$ for  $j=1,\dots,L$ and $\omega\in\Omega^k_\infty(X)$. Define the integral of $\omega$ over the chain $c$ by
$$ \int_c \omega := \sum_{j=1}^L a_j\int_{\sigma_j} \omega\  .$$
\end{dfs}

\subsection*{The De Rham Theorem}
\begin{dfs}
Let $X\subset\R^n$ . 
Define $H ^k (X)$ to be the cohomology of $C^k(X)$ and $H_\infty^k(X)$ to be the cohomology of $\Omega^k_\infty(X)$.
\end{dfs}

\begin{rk}
Semialgebraic homology was studied by many authors (see [K] for a list of 
references). In 1981 Hans Delfs [D] proved that semialgebraic homology is isomorphic to simplicial homology of a semialgebraic set over a real closed field. In 1996 Woerheide [Wo] showed that homology theory of singular definable simplices in a o-minimal structure satisfies Eilnberg-Steenrod axioms and therefore coincides with the standard singular homology theory.  
Complete proofs of comparison theorems for o-minimal homology (in particular for semialgebraic sets over $\R$ ) can be found in a recent paper by Edmundo and Wortheide [EWo].
\end{rk}

The main result of this article is
\begin{thm}\label{De Rham} (De Rham)
Let $X\subset\R^n$ be a compact set then
the map 
$$\psi : \Omega^k_\infty(X)\to  C^k(X),\ \ \  \psi(\omega)c := \int_c \omega $$ induces an isomorphism on cohomology.
\end{thm}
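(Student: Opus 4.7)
The plan is to combine Stokes' formula for $L^\infty$ forms (Section \ref{sec_stokes}), which gives that $\psi$ is a cochain map, with the $L^\infty$ Poincar\'e lemma (locally closed $L^\infty$ forms of positive degree are exact, proved in Section \ref{smoothing}) by means of a finite Mayer--Vietoris induction on a good cover of the compact set $X$.

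The first step is to establish a Mayer--Vietoris long exact sequence for $L^\infty$ cohomology. Given an open stratified cover $X = U \cup V$ together with a subordinate Lipschitz partition of unity $\rho_U + \rho_V = 1$ (such semialgebraic Lipschitz partitions exist), multiplication by $\rho_U, \rho_V$ preserves the $L^\infty$ property in view of Propositions \ref{lip_infty} and \ref{wedge_product}, so the classical construction yields the short exact sequence
\begin{equation*}
0 \to \Omega_\infty^\bullet(U \cup V) \to \Omega_\infty^\bullet(U) \oplus \Omega_\infty^\bullet(V) \to \Omega_\infty^\bullet(U \cap V) \to 0
\end{equation*}
and hence a long exact sequence. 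By naturality of the map $\omega \mapsto (c \mapsto \int_c \omega)$ and by Proposition \ref{prop_pull_back}, this sequence fits into a commutative ladder with the Mayer--Vietoris sequence for singular semialgebraic cochains.

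The second step is to produce a finite good cover $\{U_1, \dots, U_N\}$ of $X$ in which every nonempty iterated intersection $U_{i_1}\cap\cdots\cap U_{i_r}$ Lipschitz-deformation-retracts to a point. One obtains such a cover by taking the open stars of the vertices of a semialgebraic triangulation of $X$ and iterating Theorem \ref{retraction} to check that the intersections retract onto lower-dimensional pieces and, recursively, onto a point. On any such contractible piece $W$, the $L^\infty$ Poincar\'e lemma gives $H_\infty^k(W) = 0$ for $k > 0$ and $H_\infty^0(W) = \R$, matching singular cohomology; the map $\psi$ in degree zero is evaluation on constants, hence an isomorphism. Then I would induct on $j$ for $W_j := U_1 \cup \cdots \cup U_j$: the five lemma applied to the two Mayer--Vietoris ladders for $W_{j-1}, U_j, W_{j-1} \cap U_j$ upgrades the inductive hypothesis to $\psi$ being an iso on $W_j$, terminating at $W_N = X$.

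The main obstacle is the Mayer--Vietoris step for $L^\infty$ forms: surjectivity of the restriction map requires cutting a bounded stratified form on $U \cap V$ by Lipschitz bump functions into two pieces that extend by zero to $U$ and to $V$ respectively, while remaining bounded, stratified, and compatible with the equivalence relation defining $\Omega_\infty^\bullet$. The subsidiary technical point is arranging a cover in which not only the sets but all iterated intersections remain Lipschitz-contractible, which is exactly where the Lipschitz -- not merely continuous -- character of the deformation retraction supplied by Theorem \ref{retraction} is crucial.
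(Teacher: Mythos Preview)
Your outline diverges from the paper's argument and has a genuine gap at the Poincar\'e step.

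The paper does \emph{not} run a Mayer--Vietoris/good-cover induction. Surjectivity of $\psi$ is obtained directly from Whitney's elementary forms (Section~\ref{elm_form} and Proposition~\ref{cohom_elem_isom}): given a closed cochain $f$, pass to a simplicial representative $f'$ and take $\phi_{T,f'}$, which is an $L^\infty$ form since it is the restriction of a smooth ambient form, and $\psi\phi_{T,f'}=f'$. Injectivity (Theorem~\ref{inj}) is proved by a patching argument tailored to a \emph{fixed} closed form $\omega$: one chooses a finite cover on which \emph{that particular} $\omega$ is exact, and then glues the local primitives two sets at a time using the linear-algebra Lemma~\ref{lin_alg} together with elementary forms to kill the obstruction class of $\eta_A-\eta_B$ in $H^{k-1}(A\cap B)$.

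The reason your good-cover plan does not go through with the tools of the paper is Remark~\ref{rk_poincare}: the $L^\infty$ Poincar\'e lemma, as stated and proved here, produces for each closed $\omega$ and each point $p$ a neighborhood $U_p$ \emph{depending on $\omega$} on which $\omega$ is exact. It does \emph{not} assert $H_\infty^k(W)=0$ for a fixed Lipschitz-contractible $W$ and all $\omega$. The dependence enters because the retraction of Theorem~\ref{retraction} is only strata-preserving for a stratification $\Sigma_U$ refining a stratification on which $(\omega,\Sigma)$ is stratified; for a different form one may need a different refinement and hence a different neighborhood. Consequently you cannot fix in advance a cover whose iterated intersections are ``$L^\infty$-acyclic'', which is exactly what the five-lemma step of your induction needs. (Note also that Theorem~\ref{retraction} retracts onto a lower-dimensional set $N$, not a point; iterating it shrinks the neighborhood, so it does not give you a single Lipschitz contraction of a fixed open star.)

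If you want to salvage a Mayer--Vietoris style proof, you would have to upgrade the Poincar\'e lemma to a homotopy-invariance statement of the form $H_\infty^\bullet(W)\cong H_\infty^\bullet(\mathrm{pt})$ for every semialgebraic $W$ admitting a Lipschitz contraction compatible with \emph{some} stratification---but that is precisely the delicate point the paper avoids by making the cover depend on $\omega$ in the injectivity argument and by using elementary forms for surjectivity.
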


\section{Stokes' Theorem}\label{sec_stokes}
Stokes' formula for singular spaces was previously considered in the literature, see for example, [L] and [Paw]. In [L] Stokes' formula was considered for bounded subanalytic forms. In [Paw] Stokes' formula is proven for subanalytic leaves. In this section we give an alternative proof of Stokes' formula for semialgebraic chains. 

First we recall the definition of a cylindrical cell decomposition of $\R^n$ and prove some
basic but useful facts from semialgebraic geometry.

\begin{df} \label{ccd}
We define a {\bf cell of $\R^n$} by induction
on $n$. For $n=1$ a cell is either a point or an open interval.
For $n>1$, a cell in $\R^n$ is either a graph of a smooth function over a cell in $\R^{n-1}$ 
or a band, a set delimited by graphs of two smooth functions over a cell in $\R^{n-1}$, i.e. 
\mbox{$\{(x,y): x \in C', \xi_1(x) <y<\xi_2(x)\}$} where $C'$ is a cell in $\R^{n-1}$.
A finite collection $\C$ of disjoint cells of $\R^n$ is called {\bf cylindrical cell decomposition of $\R^n$} if the union of all cells in $\C$ covers $\R^n$.

A {\bf refinement} of a collection of cells $\C$ is a collection of cells $\C'$ such that 
every cell in $\C$ is a union of cells in $\C'$. We write $\C'\prec\C$.

We say that a collection $\C$ of cells of $\R^n$ is compatible with a set $A\subset\R^n$ if 
every cell that intersects $A$ is contained in $A$.

\end{df}

We will need the following definition for the next lemma.
\begin{df} Let $\A$ be a collection of cells in $\R^n$. We say that $\A$ satisfies the
{\bf frontier condition} if the boundary of each cell is a union of cells in $\A$.
\end{df}

\begin{lem}\label{str_front}
Let $A_1,\dots,A_l\subset\R^n$ be compact sets. There exists cylindrical cell decomposition of $\R^n$ 
compatible with $A_1,\dots,A_l$ that satisfies the frontier condition.
\end{lem}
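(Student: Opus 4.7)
The plan is to prove this by induction on the ambient dimension $n$, combining the standard cylindrical cell decomposition theorem with a careful refinement to enforce the frontier condition. For $n=1$, each compact semialgebraic $A_i\subset\R$ is a finite disjoint union of points and closed intervals; letting $E\subset\R$ be the finite set of endpoints together with all isolated points of the $A_i$'s, the partition of $\R$ into the points of $E$ and the connected components of $\R\setminus E$ is a cylindrical cell decomposition compatible with the $A_i$'s, whose cells have boundaries that are unions of points in $E$, so the frontier condition holds.

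For the inductive step, first apply the standard cylindrical cell decomposition theorem to obtain a decomposition $\C_0$ of $\R^n$ compatible with $A_1,\dots,A_l$. Let $\pi:\R^n\to\R^{n-1}$ be the projection on the first $n-1$ coordinates; each cell of $\C_0$ projects to a cell of the induced decomposition $\C'_0$ of $\R^{n-1}$, and above each $D'\in\C'_0$ the cells of $\C_0$ are organized as graphs of finitely many smooth semialgebraic functions $\xi_1^{D'}<\dots<\xi_{m(D')}^{D'}$ on $D'$ together with the bands between (and below/above) them. For each pair of cells $C'\subset\overline{D'}$ in $\C'_0$ and each defining function $\xi_i^{D'}$, confining attention to a large cube (whose faces we add to the list of compatibility sets) makes all relevant $\xi_i^{D'}$ bounded, so $\overline{\Gamma_{\xi_i^{D'}}(D')}\cap(C'\times\R)$ is a compact semialgebraic subset of $C'\times\R$.

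Using the inductive hypothesis, I would refine $\C'_0$ to a cylindrical cell decomposition $\C'$ of $\R^{n-1}$ that (i)~satisfies the frontier condition in $\R^{n-1}$, (ii)~is compatible with all $\pi(A_i)$ and with each $\pi(\overline{\Gamma_{\xi_i^{D'}}(D')})$, and (iii)~is fine enough that over each $\widetilde{C'}\in\C'$ contained in some $C'\subset\overline{D'}$, the limits of $\xi_i^{D'}$ along $D'$ are realized by finitely many smooth semialgebraic functions $\widetilde\xi^{\widetilde{C'}}_{i,j}$ on $\widetilde{C'}$. Property~(iii) can always be arranged by a further semialgebraic refinement using semialgebraic curve selection and the fact that any semialgebraic function becomes smooth after partition into finitely many cells. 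Lifting $\C'$ back to $\R^n$, using both the original $\xi_i^{D'}$ on interior cells and the limit functions $\widetilde\xi^{\widetilde{C'}}_{i,j}$ on boundary cells, yields a cylindrical cell decomposition $\C$ of $\R^n$ refining $\C_0$ and satisfying the frontier condition: the closure of a graph cell $\Gamma_{\xi_i^{D'}}(D')$ meets each ``boundary slab'' $\widetilde{C'}\times\R$ of $\C'$ in a finite union of graph cells of $\C$, and the frontier of a band cell is assembled from the frontiers of its two bounding graph cells.

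The main obstacle is precisely step~(iii): ensuring that after finitely many semialgebraic refinements, the boundary behavior of each defining function is captured by a finite collection of smooth semialgebraic functions on the refined base cells, rather than by a multivalued or genuinely singular limit. This is the heart of the argument, and is exactly where the semialgebraic (equivalently, o-minimal) nature of the data is essential.
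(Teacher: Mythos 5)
Your overall scaffolding matches the paper's: induction on $n$, project to $\R^{n-1}$, refine the base decomposition, and lift. The base case $n=1$ is the same, and the idea of feeding the frontier data back into the inductive hypothesis on the base is also shared. However, you explicitly flag step~(iii) as ``the heart of the argument'' and then do not prove it -- you assert it ``can always be arranged by a further semialgebraic refinement using semialgebraic curve selection,'' which is not a proof but a restatement of the difficulty. That is a genuine gap, since this step is precisely what the whole lemma hinges on.

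There is also a formulation problem in step~(iii) itself. You ask that the \emph{limits} of each $\xi_i^{D'}$ over a boundary cell $\widetilde{C'}$ be realized by smooth functions $\widetilde\xi^{\widetilde{C'}}_{i,j}$. But the relevant object for the frontier condition is the set $\pa\Gamma_{\xi_i^{D'}} = \overline{\Gamma_{\xi_i^{D'}}}\setminus\Gamma_{\xi_i^{D'}}$, a semialgebraic subset of $\R^n$ of dimension $<\dim D'$, and over a boundary cell its fiber may be an interval, not a point; so the frontier is not in general a union of graphs of single-valued ``limit functions'' over $\widetilde{C'}$ -- it can contain bands. What one actually needs is a cylindrical cell decomposition of the part of $\R^n$ lying over $\pa D'$ that is \emph{compatible with the set $\pa\Gamma_{\xi_i^{D'}}$}, not the weaker requirement that boundary limits of $\xi_i^{D'}$ be graphed.

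The paper closes exactly this gap with a secondary induction, not on $n$ but on $k=\max\{\dim C : C\in\A\}$ for the base decomposition $\A$. One removes the $k$-dimensional cells of $\A$ (call the rest $\A_1$), takes a cell decomposition of $\pi_n^{-1}(|\A_1|)$ compatible with all the sets $\pa\Gamma_{\xi_{C,j}}$, restricts to the base to get $\hat\A_1'\prec\A_1$, and applies the $k$-induction hypothesis to $\hat\A_1'$ to obtain a decomposition satisfying the frontier condition over the lower-dimensional part; the top-dimensional base cells and their defining functions are left untouched. The frontier condition for the lifted decomposition is then verified in two explicit cases (graph cells and band cells over top-dimensional base cells). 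If you want to salvage your write-up, you should replace step~(iii) by compatibility with the frontier sets $\pa\Gamma_{\xi}$ and organize the construction as a descending induction on the dimension of base cells, in the spirit just described.
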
  
\begin{proof}
The proof is by induction on $n$. For $n=1$ the sets $A_1,\dots,A_l$ are points
and intervals so we can clearly find a decomposition of $\R$ into a union of points and open intervals that is compatible with the sets $A_i$.

Suppose that $n>1$. For the proof of this step we need to introduce a notation.
Suppose that $\A$ is a collection of cells in $\R^{n-1}$ and $\Xi$ is a collection of smooth and bounded functions $\xi_{C,j}:C\to\R$, $C\in\A$, $j=1,\dots,m_{C}$. Denote by $GB_{\Xi}(\A)$ the collection of cells in $\R^n$ that is obtained by taking graphs and bands of the 
functions in $\Xi$.
We will prove the following \\
{\bf Claim: } Let $\A$ be a collection of cells in $\R^{n-1}$ that satisfies the frontier 
condition and $\Xi$ be a collection of smooth functions $\xi_{C,j}:C\to\R$, $C\in\A$, $j=1,\dots,m_{C}$ such that $GB_{\Xi}(\A)$ is compatible with the sets $A_1,\dots,A_l$. 
Then, there exists a refinement $\tilde\A\prec\A$ and a collection of functions 
$\tilde\Xi$ of functions defined over the cells of $\tilde A$ such that 
$GB_{\tilde \Xi}(\tilde\A)$ refines $GB_{\Xi}(\A)$, satisfies the frontier condition
and is compatible with the sets $A_1,\dots,A_l$.

Before we prove the claim let us show that it implies the step of the induction.
Let $\pi_n:\R^n\to\R^{n-1}$ be the projection to the first $n-1$ coordinates.
Let $\C$ be a cylindrical cell decomposition of $\R^n$ compatible with $A_1,\dots,A_l$.
Set $\C':=\pi_n(\C)$. By induction hypothesis there exists a refining cell decomposition $\A\prec\C'$ that satisfies the frontier condition. Let $\Xi'$ be the collection of
all functions defined over the cells of $\C'$ such that $GB_{\Xi'}(\C')=\C$.
Set $\Xi$ to be the collection of functions that is obtained by restricting the functions in $\Xi'$ to the cells of $\A$. It follows from the claim that there exists 
a refinement $\tilde \A\prec \A$ and a collection of functions $\tilde \Xi$ such that 
$\B:=GB_{\tilde \Xi}(\tilde \A)\prec \C$ satisfies the frontier condition and is compatible with 
$A_1,\dots,A_l$. Therefore, $\B$ is the desired cell decomposition.

To complete the lemma we prove the claim. The proof of the claim is by induction on 
$k=\max \dim \{C: C\in \A\}$.
If $k=0$ then $\A$ is a finite collection of points and therefore the functions in $\Xi$
are constant functions. Set $\tilde \A:=\A$ and $\tilde \Xi:=\Xi$.

Suppose that $k>0$. Set 
$$ \A_1 = \A-\{\text{cells of $\A$ of dimension $k$}\}$$
Let $\hat\A_1$ be a cell decomposition of $\pi_n^{-1}(|\A_1|)$
that is compatible with $\pa \Gamma_{\xi_C,j}$, $C\in \A$, $\xi_{C,j}\in \Xi$ ,$j=1,\dots,m_{C}$.
Let $\hat \A_1'$ be the cells of $\hat \A_1$ that are contained in $\R^{n-1}$.
Note that $\hat \A_1'\prec \A_1$. 
We may assume that $\hat \A_1'$ satisfies the
frontier condition, since if not, by induction hypothesis of the lemma, we may
find a refinement of $\hat \A_1'$ that satisfies the frontier condition.
Set $\Xi'_1$ to be a collection of functions defined over the cells of $\hat\A_1'$ 
such that $GB_{\Xi'_1}(\hat\A_1')=\hat \A_1$.
Since $\max \dim \{C\in\hat\A_1'\} < k$ we may apply the induction hypothesis of the claim
to $\hat\A_1'$ and $\Xi_1'$ to obtain a refinement $\B\prec \hat\A_1'$ and a collection of functions $\Xi_{\B}$ such that $GB_{\Xi_{\B}}(\B)$ satisfies the frontier condition and
compatible with $A_1,\dots,A_l$. Now, let $\C$ be the collection of cells in $\A$ of 
dimension $k$ and set $\Xi_{\C}$ to be the subset of $\Xi$ consisting of all the functions
that are defined over the cells in $\C$.
Set $\tilde \A := \B\cup\C$ and $\tilde \Xi:=\Xi_{\C}\cup\Xi_{\B}$.

Let us check that $\mathcal D:=GB_{\tilde \Xi}(\tilde \A)$ satisfies the frontier condition.
First suppose that $D$ is a graph or band over a cell $D'\in \B$.
In this case $D\in GB_{\Xi_{\B}}(\B)$ so it satisfies the frontier
condition by construction.
Now suppose that $D$ is a graph or a band over a cell $D'\in \C$.
\\
{\bf case I: $D=\Gamma_{\xi_{D'}}$.} In this case $\xi_{D'}\in\Xi_{\C}$ and $\pa D=\pa \Gamma_{\xi_{D'}}$.
Recall that $GB_{\Xi_{\B}}{(\B)}$ is compatible with $\pa \Gamma_{\xi_{D'}}$. 
In particular, $\pa D$ is contained in a union of cells of $GB_{\Xi_{\B}}(\B)$.
\\
{\bf case II:} $D$ is a band delimited by graphs of two functions $\xi_1 < \xi_2$. 
In this case $\xi_1,\xi_2 \in\Xi_{\C}$ and 
$$ \pa D=\Gamma_{\xi_1}(D')\cup\Gamma_{\xi_1}(D')\cup R, $$
where $R$ is a set that projects into $\pa D'$. Now, since cells in $\R^{n-1}$ satisfy the 
frontier condition, there exist cells $D_1,\dots,D_p$ such that $\pa D'=\cup_{j=1}^{p} D_j$.
As in case I,  $GB_{\Xi_{\B}}(\B)$ is compatible with $\pa \Gamma_{\xi_i}$ for $i=1,2$ and therefore, the set $R$ is given by graphs and bands of some functions in $\Xi_B$ that are defined 
over the cells $D_1,\dots,D_p$.
\end{proof}

\begin{lem}\label{str_to_str}
Let $X,Y\subset\R^n$ be compact subsets with stratifications $\Sigma_X$ and $\Sigma_Y$, $f:X\to Y$ a map. There exist refining stratifications $\Sigma'_X$ and $\Sigma'_Y$, that 
are cell decompositions, such that 
$f(S)\in\Sigma'_Y$ for any $S\in \Sigma'_X$ and moreover, if $\dim f(S)=\dim S$ then $f|_S$ is a diffeomorphism.
\end{lem}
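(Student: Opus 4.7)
I apply Lemma~\ref{str_front} to the product $\R^{2n}=\R^n\times\R^n$ containing the graph $\Gamma:=\Gamma_f(X)=\{(x,f(x)):x\in X\}$. Let $\pi_1,\pi_2\colon\R^{2n}\to\R^n$ denote the projections onto the first and the last $n$ coordinates. The proof has two parts: (i) producing cell decompositions $\Sigma'_X,\Sigma'_Y$ with $f(S)\in\Sigma'_Y$ for every $S\in\Sigma'_X$, and (ii) upgrading to a diffeomorphism whenever $\dim f(S)=\dim S$.

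For (i), I first invoke Lemma~\ref{str_front} in $\R^{2n}$ with the $X$-coordinates ordered first, producing a cylindrical cell decomposition $\mathcal{E}$ compatible with $\Gamma$, $X\times\R^n$, $\R^n\times Y$ and the products $\overline{S}\times\R^n$, $\R^n\times\overline{T}$ for $S\in\Sigma_X$, $T\in\Sigma_Y$, with the frontier condition. The projection $\pi_1(\mathcal{E})$ is then a cylindrical cell decomposition $\mathcal{D}_X$ of $\R^n$ whose restriction to $X$ refines $\Sigma_X$. For any cell $E\subset\Gamma$ of $\mathcal{E}$, $\pi_1|_E$ is a bijection onto $C':=\pi_1(E)$, because two points of $\Gamma$ sharing an $x$-coordinate must coincide; consequently $E$ must be a graph (not a band) at every stage of its cylindrical construction, and iterating these graph equations from coordinate $n+1$ to $2n$ expresses every component of $f$ on $C'$ as a smooth function, so $f|_{C'}$ is smooth. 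I then apply Lemma~\ref{str_front} to $\R^n$ to obtain a cylindrical $\mathcal{D}_Y$ compatible with $Y$, with $\overline{T}$ for $T\in\Sigma_Y$, and with the closures of the image sets $f(C')$ for $C'\in\mathcal{D}_X|_X$. To force $f(S)$ to be a single cell of $\mathcal{D}_Y$, rather than merely a union of cells, I iterate: after any refinement of $\mathcal{D}_Y$ I refine $\mathcal{D}_X$ compatibly with the preimages $(f|_{C'})^{-1}(T)$, and after any refinement of $\mathcal{D}_X$ I re-enrich $\mathcal{D}_Y$ with the new, smaller images. All sets entering this loop are Boolean combinations of finitely many semialgebraic sets built from $X$, $Y$, $\Gamma$, $\Sigma_X$, $\Sigma_Y$ and the smooth graphing functions extracted in the first step, so the loop stabilizes after finitely many rounds and yields the required $\Sigma'_X,\Sigma'_Y$.

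For (ii), fix $S\in\Sigma'_X$ with $\dim f(S)=\dim S=:k$. The smooth map $f|_S$ has rank at most $k$ at every point, and the rank is a semialgebraic function on $S$; refining $S$ along its rank-level sets (and re-running the loop of (i) to preserve the matching condition) I may assume $f|_S$ has constant rank $k$. Then $f|_S$ is a local diffeomorphism onto the cell $f(S)$, and a semialgebraic local diffeomorphism between semialgebraic sets of equal dimension is a finite-sheeted covering of its image, so a last refinement of $S$ into its finitely many semialgebraic sheets makes $f|_S$ globally injective, and hence a diffeomorphism onto $f(S)$.

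\emph{Main obstacle.} A single cylindrical decomposition of $\R^{2n}$ privileges only one projection: $\pi_2(E)=f(C')$ need not be a cylindrical cell of $\R^n$ even when $E$ is one of $\R^{2n}$. Consequently one cannot extract both $\Sigma'_X$ and $\Sigma'_Y$ from a single application of Lemma~\ref{str_front}, and showing that the back-and-forth refinement between $\mathcal{D}_X$ and $\mathcal{D}_Y$ terminates while preserving cylindricity at every step is the technical heart of the argument.
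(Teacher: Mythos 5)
You correctly diagnose the obstacle (a single cylindrical decomposition of $\R^{2n}$ privileges only one projection), but then attempt to remedy it by placing the $X$-coordinates first and running a back-and-forth refinement loop between $\mathcal{D}_X$ and $\mathcal{D}_Y$. That loop has no termination guarantee: each round introduces genuinely new cell walls (the boundaries of the images $f(C')$ and the preimages $(f|_{C'})^{-1}(T)$), so the family of semialgebraic sets you are decomposing grows at every step, and the claim that the loop ``stabilizes after finitely many rounds'' because everything is a Boolean combination of a fixed finite list of sets is simply not true of the process you describe. The same difficulty re-enters part (ii), where the rank stratification and the ``sheet'' refinement would again have to be fed back through the loop; moreover a semialgebraic local diffeomorphism need not be a covering of its image, so the finite-sheet clean-up is not immediate either. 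As written, this is a genuine gap, not a deferrable technicality.

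The paper sidesteps the whole issue by reversing the coordinate ordering. It takes $Gr:=\{(f(x),x):x\in X\}\subset Y\times X$, with the $Y$-coordinates listed \emph{first}, and applies Lemma~\ref{str_front} once to get a cylindrical decomposition $\C$ of $\R^{2n}$ compatible with $Gr$ and $Y\times X$ and refining $\Sigma_Y\times\Sigma_X$. By the definition of a cylindrical decomposition, cells of $\C$ project to cells in the first $n$ coordinates, so $\Sigma'_Y$ is a cell decomposition of $Y$ with no further work. The $X$-side needs no cylindrical privilege at all: since $Gr$ is a graph over its last $n$ coordinates, the projection $\pi_X|_{Gr}$ is already a bijection onto $X$, so the cells of $\C$ lying in $Gr$ project to a stratification $\Sigma'_X$ of $X$ refining $\Sigma_X$, and writing $f=\pi_Y\circ(\pi_X|_{Gr})^{-1}$ shows $f(S)\in\Sigma'_Y$ for every $S\in\Sigma'_X$. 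For the diffeomorphism clause, if $\dim f(S)=\dim S$ then the cell $E\subset Gr$ above $S$ has the same dimension as its $\pi_Y$-image $f(S)$, so the cylindrical tower from $f(S)$ up to $E$ consists of graphs only (a band would raise the dimension); hence $\pi_Y|_E$ is injective and $f|_S=\pi_Y|_E\circ(\pi_X|_E)^{-1}$ is a diffeomorphism, with no rank analysis or covering argument needed. The key idea you missed is that the graph already gives you a bijection with $X$ for free, so the one projection the cylindrical structure can privilege should be the $Y$-projection, not the $X$-projection.
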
  
\begin{proof}
Let $Gr:=\{(f(x),x):x\in\ X\}\subset Y\times X$. By Lemma \ref{str_front}, there exists cylindrical cell decomposition $\C$ of $\R^{2n}$ that is compatible with $Gr$
and $Y\times X$, refines $\Sigma_Y\times\Sigma_X$ and satisfies the frontier condition. 
Define $\Sigma'_X$ to be the union of projections of all the the cells in $Gr$ to $X$ and
similarly, $\Sigma'_Y$ to be the union of projections of all the cells in $Y\times X$ to $Y$.

Note that $\Sigma'_Y$ is a stratification since by 
definition of cell decompositions cells in $Y\times X$ project to cells in $Y$.
Clearly $f(x)=\pi_Y\circ i_{Gr}(x)$ where $i_{Gr}(x)=(f(x),x)$ and therefore $f$ maps cells
to cells.

Suppose that $\dim f(S)=\dim S$ for $S\in\Sigma'_X$. 
Suppose that the cell $C_l:=\{(f(x),x):x \in S\}\subset Gr$ was obtained inductively as follows.
The first cell $C_1$ is a cell in $Y$ and $C_j$ is a graph or a band over the cell $C_{j-1}$.
Clearly,  
$$ f(S)=\pi_Y(\{(f(x),x):x \in S\}) = C_1 .$$ 
It follows that $\dim C_1=\dim C_l$ and therefore each $C_j$, $j=1,\dots,l$ is a graph.
In particular $\pi_Y$ restricted to $C_l$ is a one to one map 
and therefore $f|_S$ is a diffeomorphism.
\end{proof}

The following Lemma is a version of the well known Wing Lemma (cf. [BCR] Theorem 9.7.10, [Paw] Theorem 1.1 and [Wa] Proposition on page 342). 
\begin{lem}\label{wing}
Let $X\subset \R^n$ be a locally closed manifold of dimension $k$. 
There exists a set $B\subset \overline X$, $\dim B<k-2$ such that 
every point $p\in \pa X-B$ has a neighborhood $W\subset \overline X$ that is a union of finitely many manifolds $W_1,\dots,W_m$ with a common boundary $W\cap \pa X$.
Moreover, if $X$ is a cell then $m=1$.
\end{lem}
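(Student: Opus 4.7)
The plan is to extract the required local structure from a cylindrical cell decomposition of $\R^n$ adapted to $X$ and $\overline X$. First I would invoke Lemma \ref{str_front} to obtain a cylindrical cell decomposition $\mathcal{C}$ of $\R^n$ compatible with $X$ and $\overline X$ and satisfying the frontier condition. Since $\dim\partial X\leq k-1$, no $k$-cell of $\mathcal{C}$ can lie entirely in $\partial X$, so every $k$-cell of $\mathcal{C}$ contained in $\overline X$ is actually contained in $X$; call these cells $X_1,\dots,X_s$. I then set $B$ to be the union of those cells of $\mathcal{C}$ that are contained in $\partial X$ and have dimension at most $k-3$, so that $\dim B\leq k-3<k-2$ as required.

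Next, take $p\in\partial X\setminus B$, so $p$ lies in some cell $D\subset\partial X$ of dimension $k-1$ or $k-2$. Choose a small open ball $U\subset\R^n$ about $p$, and set $W:=\overline X\cap U$. By the frontier condition and the cell decomposition, for $U$ sufficiently small, the only cells of $\mathcal{C}$ meeting $U$ are those whose closure contains $p$, namely $D$ itself, the $(k-1)$-dimensional cells of $\partial X$ whose closure contains $D$ (when $\dim D=k-2$), and the $k$-cells $X_{i_1},\dots,X_{i_m}$ satisfying $D\subset\partial X_{i_j}$. Define $W_j:=\overline{X_{i_j}}\cap W$, so that $W=W_1\cup\cdots\cup W_m$.

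The key claim is that each $W_j$ is a $k$-dimensional manifold whose topological boundary in $W$ is precisely $W\cap\partial X$. This follows from the inductive structure of cylindrical cells: since $X_{i_j}$ is built from graphs and bands over cells in $\R^{n-1}$, its closure near the boundary cell $D$ is locally diffeomorphic to a Euclidean half-space when $\dim D=k-1$, and more generally to a model with corners when $\dim D=k-2$. In either case the topological boundary of $W_j$ in $W$ is the union of those cells of $\partial X_{i_j}$ that meet $U$, and these together reconstitute $W\cap\partial X$. The ``moreover'' statement is then immediate: if $X$ is itself a single cell, only one $k$-cell is present, so $m=1$.

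The main obstacle lies in the $(k-2)$-dimensional case. There $W\cap\partial X$ consists of $D$ together with several $(k-1)$-cells of $\partial X$ whose closures contain $D$, and one must check that each attaching $k$-cell $X_{i_j}$ abuts precisely the right combination of these $(k-1)$-cells so that the union $\bigcup_j\partial W_j$ agrees with $W\cap\partial X$. This is essentially a careful inductive analysis of how graphs and bands of the cylindrical decomposition combine along a $(k-2)$-edge, and constitutes the technical core of the wing lemma; the frontier condition provided by Lemma \ref{str_front} is precisely what makes this local combinatorics work out uniformly.
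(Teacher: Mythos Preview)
Your reduction to a cell decomposition matches the paper's first step: the paper also begins with a cell decomposition of $\R^n$ compatible with $\overline{X}$ and reduces to showing that for each $k$-cell $S$, the closure $\overline S$ is a manifold with boundary near generic points of $\partial S$. But at this point your argument stops precisely where the real work begins.

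Your central claim --- that because $X_{i_j}$ is a cylindrical cell, its closure near a $(k-1)$-dimensional boundary cell $D$ is ``locally diffeomorphic to a Euclidean half-space'' --- is exactly the content of the lemma, and the cylindrical structure does not give it for free. The functions defining a cell are smooth only on the interior of the base cell; nothing in Definition~\ref{ccd} controls their behaviour as one approaches $\partial D$ or guarantees that the tangent plane $T_xS$ has a limit along $D$. The paper supplies this missing analysis: it studies the Gauss map $G:S\to\G^k_n$, shows that the closure of its graph has generically single-point fibres over $\partial S$ (so $G$ extends continuously off a set $B'$ of dimension $\le k-2$), upgrades this extension to a smooth one off a further small set $B''$, invokes a Whitney $(a)$ stratification of $\overline S$ to ensure $T_qS'\subset G(q)$ along the boundary stratum, and only then constructs an explicit collar $\gamma:U'_0\times[0,\varepsilon)\to\overline S$. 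None of this is visible from the combinatorics of graphs and bands.

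Your treatment of the $(k-2)$-dimensional case is also problematic. The bound ``$\dim B<k-2$'' in the statement is evidently a typo for $\dim B\le k-2$: the paper's own proof sets $B:=B'\cup B''\cup|\Sigma^{(k-2)}\cap\partial S|$, thus putting the entire $(k-2)$-skeleton into $B$, and the application in Theorem~\ref{StokesThm} only uses $\dim B<k-1$. The stronger bound you are aiming for is in fact false: for the $2$-cell $\{0<x<1,\;0<y<x^2\}\subset\R^2$, the closure near the $0$-cell $(0,0)$ is a cusp, and no neighbourhood of $(0,0)$ in $\overline X$ can be written as a finite union of smooth manifolds with common boundary $W\cap\partial X$, since that boundary is not a smooth $1$-manifold at the cusp point. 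So the ``careful inductive analysis'' you allude to for codimension-two edges cannot be carried out; those points must go into $B$.
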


\begin{proof}

Let $\C$ be a cell decomposition of $\R^n$ that is compatible with $\overline X$.
Let $S\in \C$, $S\subset X$, $\dim S = k$ and $\overline S \cap \pa X\neq \emptyset$.
We claim that there exists a set $B\subset \pa X$, $\dim B<k-2$ such that 
every point $p\in \pa S-B$ has a neighborhood $U\subset \overline S$ that
is a manifold with boundary $U\cap\pa S$.
Before we prove this claim let us show that the lemma follows from it.
Indeed, for each point $p\in \pa X-B$ there exist a cell $S'$ of dimension $(k-1)$ and
cells $S_1,\dots,S_m$ of dimension $k$ such that $S'\leq S_j$ for all $j=1,\dots,m$ and
$\cup_{j=1}^m \overline S_j$ contains an open neighborhood of $p$ in $\overline X$. Take $W_j\subset \overline S_j$ to be an open set such that $W_j$ is a manifold with boundary 
$W_j\cap\pa S_j$ for $j=1,\dots,m$. Set $W:=\cup_{j=1}^m W_j$. 

Next, we prove the claim.
Let $G:S\to \G_{n}^{k}$ be the Gauss map, $x\mapsto T_x S$. 
Let $\Gamma:=\Gamma_G(S)$ be the graph of $G$.
Since $S$ is semialgebraic, $\dim \pa \Gamma< \dim \Gamma=k$. Therefore, the set $E:=\pa \Gamma \cap (\pa S \times \G_n^{k})$ is of dimension at most $k-1$. Let $\pi :\R^n\times\G_n^{k}\to \R^n $ be the standard projection. Note that $\pi(E)\subset \pa S$ and hence we have $\pi|_E:E \to \pa S$.
Thus we conclude that there exists a set $B'\subset \pa S$ such that any fiber of $\pi$ over 
$\pa S - B'$ must be a 
connected set of dimension $0$ and therefore must be a point.
It follows that the Gauss map can be uniquely extended to a continuous map in a neighborhood of any point $p\in \pa S - B'$. We will keep denoting the extension of the Gauss map by $G$. 
Let $G':\pa S - B'\to \G^{k}_n$, $G'(p):=G|_{\pa S - B'}=\lim_{q\to p}G(q)$. The map $G'$ is a continuous semialgebraic map and hence $\pa S - B'$ can be stratified so that $G'$ is smooth on every stratum.
Let $B''\subset\pa S - B'$ be the union of all the strata of dimension smaller than $k-1$.
Clearly, $G'$ is smooth on $\pa S - (B'\cup B'')$.

Let $\Sigma$ be a Whitney (a) stratification of $\overline S$ and set $B:=B'\cup B''\cup |\Sigma^{(k-2)}\cap\pa S|$.
Let $p\in\pa S - B$. In particular $p$ belongs to some stratum $S'\in\Sigma$ of dimension $(k-1)$. 
We will show that $\overline S$ is a manifold with boundary near $p$.
Set $L_q:=T_q S'$. 
By Whitney (a) condition we have $L_q \subset G(q)$.
Since $G$ is continuous we may pick a small enough ball $U_0:=B(p,\eps)\cap\overline S$ such that the Grassmanian distance between $G(q)$ and $G(p)$, $q\in U_0$, is so small that there exists an orthogonal projection $\pi_{G(p)}:\R^n \to G(p)$ such that $\pi_{G(p)}|_{G(q) }$ is one to one for every $q\in U_0$.
Similarly, by possibly shrinking $U_0$, we may assume that there exists 
an orthogonal projection $\pi_{L_p}:\R^n \to L_p$ such that $\pi_{L_p}|_{L_q }$ is one to one for every $q\in U'_0:=U_0\cap S'$. 
In other words, the neighborhood $U_0$ is chosen to be so small that the tangent spaces 
at points of $U_0$ are nearly parallel to $G(p)$ and the tangent spaces to $U_0\cap S'$ are
nearly parallel to $L_p$.

Let $n_q\in L{_q}^{\perp}\subset G(q)$ be a unit normal vector pointing inside $S$.
Note that since the spaces $G'(q)$ and $L_q$ vary smoothly for $q\in U'_0$ it follows that
$q\mapsto n_q$ is a smooth map.
Now we construct a diffeomorphism from $\gamma:U'_0\times[0,\eps)\to \overline S$ for some small $\eps>0$. 
For each point $q\in U'_0$ consider 
an affine space $\tilde L_q$ centered at $q$ and parallel to $L_q$.
Let $\pi_{\tilde L_q}:\R^n \to \tilde L_q$  be an orthogonal projection.
The fiber $\pi_{\tilde L_q}^{-1}(q)\cap S$ is an arc that can be arc-length parametrized by $\gamma(q,t)$ with $\gamma(q,0)=q$. Set $\gamma_{q,s}$ to be the set $\{\gamma(q,t):0\leq t\leq s\}$. Choose $\eps$ to be small enough so that $\gamma_{q,\eps}$ are disjoint for $q\in U'_0$.
To see that $\gamma$ is smooth note that $\gamma$ has expansion
$$ \gamma(q,t) = q+n_qt + \dots $$
\end{proof}

\begin{thm}\label{StokesThm}(Stokes' formula)
Let $X\subset\R^n$ be a compact set, $\sigma$ be a singular $k$-simplex and $\omega$ be an $L^\infty$ $(k-1)$-form on $X$. Then
\begin{equation}\label{stks}
 \int_{\sigma}d\omega=\int_{\pa \sigma}\omega\ . 
\end{equation}
\end{thm}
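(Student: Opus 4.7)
The plan is to pull the identity back to the simplex $\Delta^k$ and then apply the classical Stokes formula on each top-dimensional stratum of a well-chosen cell decomposition.

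\textbf{Setup.} Combining Lemma~\ref{str_front} with Lemma~\ref{str_to_str}, choose cell decompositions $\Sigma$ of $\Delta^k$ and $\Sigma_X$ of $X$ such that $\Sigma$ satisfies the frontier condition and is compatible with the faces of $\partial\Delta^k$; $\sigma$ sends every cell $S\in\Sigma$ into a cell of $\Sigma_X$, with $\sigma|_S$ a diffeomorphism whenever $\dim\sigma(S)=\dim S$; and $(\omega,\Sigma_X)$, $(d\omega,\Sigma_X)$ are stratified forms. Set $\alpha_S:=\sigma_S^*\omega$ on each $S\in\Sigma$; the stratum-wise chain rule yields $d\alpha_S=\sigma_S^*\,d\omega$. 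By the definition of the integral of an $L^\infty$ form and the fact that $\partial\sigma$ is the signed sum of the restrictions $\sigma|_T$ to the $(k-1)$-faces $T\subset\partial\Delta^k$, both sides of \eqref{stks} unfold into finite sums of ordinary integrals of smooth forms on $k$- and $(k-1)$-cells in $\R^k$.

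\textbf{Stokes on a single top cell.} Fix $S\in\Sigma^k$. If $\dim\sigma(S)<k$, then $\sigma_S^*\,d\omega$ is the pullback of a form by a map of rank $<k$ on a $k$-manifold and is therefore zero; the analogous rank argument shows $\sigma_T^*\omega=0$ on every $(k-1)$-face $T\leq S$, so such an $S$ contributes nothing on either side. Otherwise $\sigma|_S$ is a diffeomorphism onto a $k$-stratum of $X$, and both $\alpha_S$ and $d\alpha_S$ are smooth on $S$. Lemma~\ref{wing}, applied with $m=1$ because $S$ is a cell, provides a set $B_S\subset\overline S$ with $\dim B_S<k-2$ such that $\overline S\setminus B_S$ is, near each of its points, a genuine manifold with boundary. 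I exhaust $S$ by relatively compact open submanifolds-with-corners $S_\eps\subset S$ whose boundaries $\partial S_\eps$ converge to the smooth part of $\partial S$ and avoid an $\eps$-tubular neighborhood of $B_S$; applying the classical Stokes theorem on each $S_\eps$ and letting $\eps\to 0$, the boundedness of $\omega$ together with Lemma~\ref{cont_property} gives uniform control of $\alpha_S$ along the smooth part of $\partial S$, while $\dim B_S<k-2$ forces the flux through the shrinking tube around $B_S$ to vanish. This yields the stratum-wise identity
\begin{equation*}
\int_S d\alpha_S \;=\; \sum_{\substack{T\in\Sigma^{k-1}\\ T\leq S}} [T\!:\!S]\int_T \alpha_T,
\end{equation*}
where $[T\!:\!S]=\pm 1$ is the orientation of $T$ induced as a face of $S$.

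\textbf{Summing and cancellation.} Summing the preceding identity over $S\in\Sigma^k$ gives $\int_\sigma d\omega = \sum_{T\in\Sigma^{k-1}}\bigl(\sum_{S\geq T}[T\!:\!S]\bigr)\int_T\alpha_T$. Since $\Delta^k$ is a manifold with boundary and $\Sigma$ satisfies the frontier condition, each $(k-1)$-cell $T$ interior to $\Delta^k$ is a face of exactly two top cells $S_1,S_2\in\Sigma^k$, and $[T\!:\!S_1]+[T\!:\!S_2]=0$ because the outward normals are opposite; each $(k-1)$-cell $T\subset\partial\Delta^k$ is a face of exactly one top cell and inherits the standard outward orientation of the corresponding face of $\Delta^k$. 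The interior contributions therefore cancel, and the remaining boundary sum is by definition $\int_{\partial\sigma}\omega$.

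\textbf{Main obstacle.} The delicate point is the $\eps\to 0$ limit in the middle paragraph: one must verify that the singular locus $B_S\subset\partial S$ contributes nothing to the flux of the bounded form $\alpha_S$, and that the smooth part of $\partial S$ is swept out exactly once. The first follows from a Hausdorff-dimension estimate paired with the uniform $L^\infty$ bound on $\alpha_S$ that Lemma~\ref{cont_property} transfers from $\omega$; the second is precisely what the cell hypothesis ($m=1$ in Lemma~\ref{wing}) and the frontier condition are designed to guarantee.
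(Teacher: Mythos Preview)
Your strategy---pull back, stratify, apply classical Stokes on each top cell via exhaustion, then sum and cancel---is essentially the paper's (the paper works on the image $S=\sigma(S')\subset X$ rather than on $S'\subset\Delta^k$, and leaves the interior cancellation implicit, but the content is the same). Two points need attention.

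\textbf{A genuine error.} The sentence ``the analogous rank argument shows $\sigma_T^*\omega=0$ on every $(k-1)$-face $T\leq S$'' is false. From $\dim\sigma(S)<k$ it does \emph{not} follow that $\dim\sigma(T)<k-1$: take $\sigma(x,y)=(x,0)$ on a square, which collapses the $2$-cell yet is an embedding on the horizontal edges, so $\sigma_T^*\omega$ need not vanish there. What \emph{is} true is that the signed sum $\sum_{T\leq S}[T\!:\!S]\int_T\alpha_T$ vanishes (it equals $\int_S d\alpha_S=0$), but proving that requires exactly the exhaustion argument you give in the other case. The fix is simple: drop the case distinction and run the limiting argument for every $S\in\Sigma^k$, noting that $\alpha_S$ and $d\alpha_S$ are smooth and bounded on $S$ regardless of whether $\sigma|_S$ is a diffeomorphism.

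\textbf{The real content is underspecified.} Your ``Main obstacle'' paragraph is where the entire proof lives, and ``a Hausdorff-dimension estimate'' does not suffice. What must be shown is that
\[
\mu_{k-1}\bigl(\partial S_\eps\cap\{x:d(x,B_S)\le\delta\}\bigr)\longrightarrow 0\quad\text{as }\delta\to 0,\ \text{uniformly in }\eps;
\]
the bound $\dim B_S<k-2$ controls the $\delta$-tube around $B_S$ in the ambient space, not the intersection of that tube with the \emph{moving} hypersurface $\partial S_\eps$, whose $(k-1)$-measure could in principle concentrate near $B_S$ as $\eps\to 0$. The paper obtains this uniform control via the Cauchy--Crofton formula together with a semialgebraic uniform-finiteness bound on the number of points in $\pi_P^{-1}(q)\cap\partial S_\eps$ (independent of $\eps,\delta$). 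It also builds, from the Wing Lemma, explicit arc-length parametrizations $\phi_t:D\to\partial S_t$ near the regular part of $\partial S$, converging in $C^1$ as $t\to 0$, in order to justify $\int_{\partial S_\eps\cap U}\omega\to\int_{\partial S\cap U}\omega$; your appeal to Lemma~\ref{cont_property} alone does not give this, since that lemma controls values of $\omega$ along converging tangent vectors, not convergence of the integrals over the moving boundary. Both ingredients are missing from your sketch and constitute the substantive work.
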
 
\begin{proof}

Suppose that $(\omega,\Sigma'_X)$ is a stratified form.
Let $\Sigma_\Delta$  and $\Sigma_X\prec\Sigma'_X$ be stratifications of $\Delta$ and $X$ given by Lemma \ref{str_to_str} for the map $\sigma$.
It is enough to prove that   
$$ \int_{S}d\omega=\int_{\pa S}\omega \ ,$$
where $S=\sigma(S')\in\Sigma_X$, with $S'\in\Sigma_\Delta$, $\dim S' = k$, such that $\sigma|_{S'}$ a diffeomorphism onto $S$. Note that $S'$ is an orientable manifold since it is a cell.
Fix such $S$ and
set 
$$ S_\eps:=\{x\in S: d(x,\overline {S} - S)\geq\eps \}\ ,$$
where $d(.,.)$ denotes the Euclidean metric.
Note that $S_\eps$ is a smooth manifold with boundary and $\omega$ is a smooth form on it, so 
by the classical Stokes' formula 
$$\int_{S_\eps}d\omega = \int_{\pa S_\eps}\omega\ . $$  
Therefore, to prove (\ref{stks}), we only have to show that 
\begin{equation}\label{eq_1}
\lim_{\eps\to 0} \int_{S_\eps}d\omega = \int_{S}d\omega 
\end{equation}
and
\begin{equation}\label{eq_2}
\lim_{\eps\to 0} \int_{\pa S_\eps}\omega=\int_{\pa S}\omega \ .
\end{equation}
Equality (\ref{eq_1}) is clear since $d\omega$ is integrable on $S$ (because $d\omega$ is bounded) and 
$\mu_k(S -S_\eps)\to 0$ as $\eps\to 0$, where $\mu_k(\cdot)$ denotes the $k$-dimensional Hausdorff measure.
To prove equality (\ref{eq_2}), let $\delta\in\R_+$ and suppose that $B\subset\pa S$,  $\dim B < k-1$, is given by Lemma \ref{wing}. Set 
$$ A_\delta:=\{x\in\pa S : d(x,\Sigma_X^{(k-2)}\cup B)\geq\delta \}\ , $$
and
$$ B_\delta:=\{x\in S : d(x,B\cup\Sigma_X^{(k-2)}\cap \pa S)\leq\delta \}\ .  $$ 
Note that $A_\frac{\delta}{2}$ is a smooth manifold with boundary and for every interior (in $A_\frac{\delta}{2}$) point $p\in A_\frac{\delta}{2}$ there exist
open neighborhoods $\tilde U_p\subset \overline S$ and  $U_p\subset A_\frac{\delta}{2}$ such 
the set $\tilde U_p$ is manifold with boundary $U_p$.
Let $\phi^p: D\times[0,\eps_p)\to \tilde U_p$, $D\subset\R^{k-1}$ an open subset, $\eps_p>0$, be a smooth parametrization of $\tilde U_p$ 
such that $\phi^p|_{D\times\{0\}}$ is a parametrization of $U_p$. 
To simplify the notations we write $\phi(.,.)$ instead of $\phi^p(.,.)$ when $p$ is clear from the context.
We may assume, by possibly shrinking $U_p$, that $D$ is a unit ball.
We may also assume that the curves $t\mapsto \phi(x,t)$ with $x\in D$ fixed, are arc length parametrized. Since otherwise, we may consider a change of coordinates $(x,t)\mapsto(x,t')$ given by
\begin{equation*}
t:=\int_0^{t'} |\frac{\pa}{\pa t}\phi(x,s)|ds .
\end{equation*}

Observe that since the curves $t\mapsto \phi(x,t)$ are arc length parametrized we have 
$\pa S_t \cap \tilde U_p = \{\phi(x,t): q\in D\}$. In particular it follows that
for all $t\in[0,\eps_p)$ the map $\phi_t:D\to \pa S_t \cap \tilde U_p$, $\phi_t(x):=\phi(x,t)$ is a diffeomorphism. Clearly, 
$\phi_t$ tends uniformly to $\phi|_{D\times\{0\}}$ in $C^1$ topology as $t\to 0$.\\
Therefore, for each $\tilde U_p$  we have
\begin{equation}\label {int_U_p}
\lim_{\eps\to 0} \int_{\pa S_\eps\cap \tilde U_p}\omega=
\lim_{\eps\to 0} \int_{\phi_\eps(D)}\omega=
\lim_{\eps\to 0} \int_{D}\phi_\eps^*\omega=
\int_{\pa S\cap\tilde U_p}\omega\ . 
\end{equation}
Since $S$ is bounded, the set $A_\delta\subset A_{\frac{\delta}{2}}$ is compact and therefore we may choose finitely many sets $U_i:=U_{p_i}$ that cover $A_\delta$. 
Set $\tilde U'_i:=\phi^{p_i}((U_i\cap A_\delta)\times[0,\eps_0))$ where $\eps_0=\min_{i} \eps_{p_i}$ and $U_\delta:=\cup \tilde U'_i$.
Note that if $\delta>0$ is fixed then for $\eps>0$ small enough we have $\pa S_\eps \subset U_{\delta} \cup B_\delta$ and therefore
$$ \int_{\pa S_\eps} \omega = \int_{\pa S_\eps \cap U_\delta} \omega + \int_{\pa s_\eps\cap (B_\delta-U_\delta)}\omega\  .$$
Let $\{\varphi_i\}$ be a partition of unity subordinate to the cover $\{\tilde U'_i\}$. 
Thus by (\ref{int_U_p}) we obtain
$$ \lim_{\eps\to 0}\int_{\pa S_\eps \cap U_\delta} \omega = \lim_{\eps\to 0}\sum \int_{\pa S_\eps \cap \tilde U'_i} \varphi_i\omega = \sum \int_{\pa S \cap \tilde U'_i\cap A_\delta} \varphi_i\omega
= \int_{A_\delta} \omega\ . $$


To complete the proof of (\ref{eq_2}), it is enough to show that $\mu_{k-1}(B_\delta\cap\pa S_\eps )$
is small in terms of $\delta$ i.e., bounded by a function $g(\delta)$ 
with $g(\delta)\to 0$ as $\delta \to 0$ for every $\eps>0$.
Since then, because $\omega$ is integrable (bounded), 
$$\left| \int_{\pa S_\eps\cap (B_\delta-U_\delta)} \omega\right|
\to 0
\text{ as } \delta \to 0\ \text{ uniformly in }\eps  .$$ 
For that matter, we use the well known Cauchy-Crofton formula [F] that can be formulated as follows.
Let $A\subset\R^n$ and set 
$$ K_j^P(A):=\{q\in P : \#\left(\pi_P^{-1}(q)\cap A \right)=j \}, $$
where $\pi_P$ is the orthogonal projection from $\R^n$ to $P$.
The $k$-dimensional Hausdorff measure of $A$ is given by
\begin{equation}\label{CC_statement}
\mu_{k}(A)=
\int_{P\in\G^{k}_n}\sum_{j=1}^{\infty}j\mu_{k}(K_j^P(A)) d\gamma(P)\ ,
\end{equation}
where 
$\gamma$ is a finite measure on $\G^{k}_n$.
Substituting $B_\delta\cap\pa S_\eps $ in formula (\ref{CC_statement}) we obtain

\begin{equation}\label{Cauchy-Crofton}
\mu_{k-1}(B_\delta\cap\pa S_\eps )=
\int_{P\in\G^{k-1}_n}\sum_{j=1}^{N(\delta,\eps)}j\mu_{k-1}(K_j^P(B_\delta\cap\pa S_\eps)) d\gamma(P)\ ,
\end{equation}
where 
$$N(\delta,\eps):=\max\{\#\left(\pi_P^{-1}(q)\cap B_\delta\cap\pa S_\eps\right)<\infty:P\in \G^{k-1}_n, q\in P\}\ .$$ 
We remark that the number $N(\eps,\delta)$ can be bounded from above by an integer $N_0$ independent of $\eps$ and $\delta$, as a consequence of uniform boundness principle for families of semialgebraic sets (see [BCR]). 
Since $\gamma$ is a finite measure, it is enough to bound each 
$\mu_{k-1}(K_j^P(B_\delta\cap\pa S_\eps))$ in terms of $\delta$ where $P\in\G^{k-1}_n$.
Note that 
$$K_j^P(B_\delta\cap\pa S_\eps)\subset \pi_P(B_\delta)\ .$$
Denote by $Y$ the set $B\cup \Sigma^{(k-2)}_X\cap \pa S$.
Clearly $\dim Y\leq k-2$ and 
$B_\delta=\{x\in S: d(x,Y)\leq \delta \}$. Therefore
$$\pi_P(B_\delta)\subset \{x\in \pi_P(S):d(x,\pi_P(Y))\leq \delta \} \ .$$
Since $\dim \pi_P(Y) < k-1$ it follows that $\mu_{k-1}(\pi_P(B_\delta))\to 0$ as $\delta \to 0$.

Set 
 $$ g(P,\delta):=\mu_{k-1}(\pi_P(B_\delta))\ .$$
Clearly the function $g$ is bounded and therefore by Lebesgue dominated convergence theorem 

$$ \int_{\G^{k-1}_n} g(\delta,P)d\gamma(P)\to 0\text { as } \delta\to 0\ .$$
From (\ref{Cauchy-Crofton}) it follows that:
\begin{equation}
\mu_{k-1}(B_\delta\cap\pa S_\eps )\leq
\int_{P\in\G^{k-1}_n}\sum_{j=1}^{N_0}jg(P,\delta) d\gamma(P)
\text{ as } \delta\to 0\ . 
\end{equation}

\end{proof}

\section{De Rham Theorem}\label{sec_de_rham}
\subsection{Elementary forms}\label{elm_form}$\ $\\
In this section we recall the concept of elementary differential forms introduced in [W Ch. IV, 27]
and adapt it to our setting.

Let $X\subset\R^n$ be a compact set, and $T:|K|\to X$ be any triangulation, that is, 
a homeomorphism from a simplicial complex $K$, $|K|\subset \R^n$, that is smooth on every open simplex $\sigma\in K$.


Let $S_j(K)$ be the real vector space generated by all $j$-simplices in $K$ and let
$\=:S_j(K)\to S_{j-1}(K)$ be the standard boundary operator defined in the usual way.
Endow $S^j(K):=Hom(S_j(K),\R)$ with the coboundary operator $d:=\pa^*$. 
Identify the canonical basis of $S_j(K)$ consisting of 
all $j$-simplices $\sigma^j_l$ of $K$, $1\leq l\leq \dim S_j(K)$
with a basis of $S^j(K)$  by 
$\sigma^j_l \leftrightarrow <\sigma^j_l,\cdot>$ where $<.,.>$ denotes the inner product 
defined by $<\sigma^j_l,\sigma^j_i>=\delta_{l,i}$.

\begin{df}
Given $\sigma\in K$, {\bf the star of $\sigma$}, $St(\sigma)$, 
is defined as the union of all open simplices that contain $\sigma$ in their 
closure.
\end{df}

Let $T:|K|\to X$ be a triangulation and let 
$\{Q_i\}_{i\in \I}$ be a finite cover of $X$ with $Q_i$ open in $\R^n$ for $i\in\I$, such that $X\cap Q_i\subset St(T(p_i))$ where $p_i$ is a vertex.
Let $\phi_i$ be a smooth partition of unity subordinate to the cover $\{Q_i\}_{i\in \I}$.
Any $j$-simplex $\sigma\in K$ can be represented by its vertices as $(p_{i_0},\dots, p_{i_j})$. 
Set
$$ \phi_{T,\sigma}:=j!\sum_{k=0}^{j}(-1)^{k}\phi_{i_k}d\phi_{i_0}
                  \wedge\dots\hat{d\phi_{i_k}}\dots\wedge d\phi_{i_j}\ . $$
The forms $\phi_{T,f}$ are called the {\bf elementary $j$-forms}. We extend this definition to all $S^j(K)$ by linearity via the identification of $S_j(K)$ with $S^j(K)$, that is 
if $f\in S^j(K)$, $f=\sum_k a_k\sigma^j_k$ then 
$$ \phi_{T,f}:=\sum_k a_k\phi_{T,\sigma^j_k}\ . $$

\begin{prop}\label{elem_prop}
For any triangulation $T:|K|\to X$  the elementary forms have the following
properties:

\begin{enumerate}
\item $\phi_{T,df}|_X=d\phi_{T,f}|_X$ for all $f\in S^j(K)$,
\item $\psi\phi_{T,f}=f$ for all $f\in S^j(K)$.
\end{enumerate} 
\end{prop}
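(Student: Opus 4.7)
The plan is to follow the classical Whitney argument for elementary forms (cf.~[W Ch.~IV, 27]), adapted to the stratified setting. By linearity, for part (1) it suffices to treat a single oriented $j$-simplex $\sigma = (p_{i_0}, \dots, p_{i_j})$. Applying $d$ to the defining formula and using the Leibniz rule together with $d(d\phi_{i_l}) = 0$, only the derivative of the $\phi_{i_k}$-factor in each summand survives. Moving $d\phi_{i_k}$ from the front back into position $k$ contributes $(-1)^k$, cancelling the sign already in the sum, so every summand reduces to $d\phi_{i_0} \wedge \cdots \wedge d\phi_{i_j}$, giving
$$d\phi_{T,\sigma} = (j+1)!\, d\phi_{i_0} \wedge \cdots \wedge d\phi_{i_j}.$$
For $\phi_{T,d\sigma}$, I order each coface of $\sigma$ as $\tau_q := (q, p_{i_0}, \dots, p_{i_j})$, making the incidence coefficient $+1$. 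Expanding $\phi_{T,\tau_q}$ produces one summand $\phi_q\, d\phi_{i_0} \wedge \cdots \wedge d\phi_{i_j}$ and $j+1$ summands containing $d\phi_q$. The key observation is that the sum over $q$ may be extended to range over \emph{all} vertices of $K$: whenever $\tau_q$ is not a simplex, the support condition $X \cap Q_i \subset St(T(p_i))$ forces the product of the relevant $\phi$-supports on $X$ to be empty, so each formal $\phi_{T,\tau_q}|_X = 0$. With the full sum, $\sum_q \phi_q = 1$ collapses the first part to $(j+1)!\, d\phi_{i_0} \wedge \cdots \wedge d\phi_{i_j}$, while $\sum_q d\phi_q = 0$ annihilates the rest, recovering $d\phi_{T,\sigma}|_X$.

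For part (2), I reduce by linearity to $\int_{\sigma'} \phi_{T,\sigma} = \delta_{\sigma,\sigma'}$ for $j$-simplices $\sigma, \sigma' \in K$. When $\sigma \neq \sigma'$, some vertex $p_{i_k}$ of $\sigma$ is absent from $\sigma'$, so $T(\sigma') \not\subset St(T(p_{i_k}))$ and both $\phi_{i_k}|_{T(\sigma')}$ and $d\phi_{i_k}|_{T(\sigma')}$ vanish, killing every term of $\phi_{T,\sigma}$ on $T(\sigma')$. When $\sigma = \sigma'$, only $\phi_{i_0}, \dots, \phi_{i_j}$ are nonzero on $T(\sigma)$ and they sum to $1$. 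Setting $\tilde\phi_l := \phi_{i_l} \circ T \circ \alpha_\sigma$, where $\alpha_\sigma : \Delta^j \to |\sigma|$ is the canonical affine parametrization, the pullback of $\phi_{T,\sigma}$ to $\Delta^j$ equals $\tilde\Phi^*\omega_0$ with $\tilde\Phi := (\tilde\phi_0, \dots, \tilde\phi_j) : \Delta^j \to \Delta^j$ and
$$\omega_0 := j!\sum_{k=0}^{j} (-1)^k u_k\, du_0 \wedge \cdots \widehat{du_k} \cdots \wedge du_j$$
on $\R^{j+1}$. Substituting $du_0 = -(du_1 + \cdots + du_j)$ on the hyperplane $\sum u_l = 1$ collapses $\omega_0|_{\Delta^j}$ to $j!\, du_1 \wedge \cdots \wedge du_j$, which integrates to $1$. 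The vertex identity $\phi_{i_l}(T(p_{i_k})) = \delta_{lk}$ shows that $\tilde\Phi$ fixes each vertex of $\Delta^j$ and sends each face $\{u_k = 0\}$ into itself, so the convex homotopy $H(x,t) := (1-t)\tilde\Phi(x) + tx$ remains in $\Delta^j$ and preserves the face structure. Since $\omega_0$ vanishes on $\partial\Delta^j$ (each term contains either a vanishing $u_l$ or a $du_k$ that restricts to zero) and $d\omega_0 = (j+1)!\, du_0 \wedge \cdots \wedge du_j$ pulls back to zero on $\Delta^j$, applying Stokes to $H^*\omega_0$ on $\Delta^j \times [0,1]$ yields $\int_{\Delta^j} \tilde\Phi^*\omega_0 = \int_{\Delta^j} \omega_0 = 1$.

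The main obstacle I anticipate is the sign bookkeeping combined with the extension-to-all-vertices trick in part (1), which rests essentially on the star-support condition to discard non-simplex contributions on $X$. Part (2) is otherwise a transcription of the classical computation, with the extra homotopy step needed here because the functions $\phi_{i_k}$ need not be affine coordinates on $T(\sigma)$ as they would be in a purely simplicial setting.
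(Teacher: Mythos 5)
The paper gives no argument of its own for Proposition \ref{elem_prop}---it simply delegates to [W, Ch.~IV, 27]---so there is no in-paper proof to compare against. Your write-up is a correct, self-contained account of the classical Whitney computation. For part (1), the extension of the coface sum to all vertices $q$ is legitimate for both reasons you need: when $\tau_q$ is degenerate (i.e.\ $q$ is already a vertex of $\sigma$), the formal expression $\phi_{T,\tau_q}$ vanishes identically because the $k=l$ term in the expansion cancels the leading $\phi_{i_l}$-term; and when $\tau_q$ is non-degenerate but not a simplex of $K$, the star-support condition $X\cap Q_i\subset St(T(p_i))$, together with the elementary fact that $\bigcap_i St(v_i)\neq\emptyset$ only if the $v_i$ span a simplex, forces $\phi_{T,\tau_q}|_X=0$. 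Then $\sum_q\phi_q=1$ and $\sum_q d\phi_q=0$ do exactly what you claim. For part (2), the $\sigma\neq\sigma'$ case is right, $\int_{\Delta^j}\omega_0=1$ is right, and the properties $\phi_{i_l}(T(p_{i_k}))=\delta_{lk}$ and $\phi_{i_k}\equiv 0$ on the face opposite $p_{i_k}$ (both again consequences of the star-support condition) do show that $\tilde\Phi$ fixes vertices and preserves faces, so the convex homotopy stays in $\Delta^j$ and the rank argument kills both $H^*d\omega_0$ and the lateral boundary term.

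The one point you should tighten: the paper only assumes $T$ is smooth on each \emph{open} simplex, so $\tilde\Phi=(\phi_{i_0},\dots,\phi_{i_j})\circ T\circ\alpha_\sigma$ is continuous on $\Delta^j$ and smooth only on $\mathrm{int}\,\Delta^j$, with $d\tilde\Phi$ possibly unbounded near $\partial\Delta^j$. Hence $H^*\omega_0$ need not be bounded near $\partial\Delta^j\times[0,1]$, and the classical Stokes theorem on the prism does not apply as stated. This is not fatal: either invoke the paper's own Stokes formula for $L^\infty$ forms (Theorem \ref{StokesThm}), observing that $\phi_{T,\sigma}$ is an $L^\infty$ form on $X$ and $T\circ\alpha_{\sigma'}$ is a semialgebraic singular simplex, or run your Stokes computation on shrunken prisms $\Delta^j_\eps\times[0,1]$ and pass to the limit, using boundedness of $\phi_{T,\sigma}$ on $X$ (rather than of the pullback) to control the lateral boundary contribution. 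With that one repair the argument is complete.
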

For a proof of this proposition see [W Ch. IV, 27].

\begin{rk}
For any  triangulation $T$ of $X$, the elementary forms are $L^\infty$ forms since they are  restrictions of smooth forms from the ambient $\R^n$.
\end{rk}

Let $\Omega^j_{elm}(T,X)$ be the linear span of the set of all elementary $j$-forms and exterior derivatives of $(j-1)$-forms. By construction, $(\Omega^\bullet_{elm},d)$ is a chain complex.
Denote by $H_{elm}^k(T,X)$ the $k^{th}$ cohomology group of $(\Omega^\bullet_{elm},d)$.
As a consequence of Proposition \ref{elem_prop} we have
\begin{prop}\label{cohom_elem_isom}
There exists an isomorphism  $\Phi_T:H^k(X)\to H^k_{elm}(T,X)$.
\end{prop}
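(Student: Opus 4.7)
The plan is to exhibit $\phi_T$ as a cochain isomorphism $S^\bullet(K)\to \Omega^\bullet_{elm}(T,X)$ and then compose with the standard comparison isomorphism between simplicial cohomology of $K$ and singular semialgebraic cohomology of $X$.

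First, by part (1) of Proposition \ref{elem_prop} the assignment $f \mapsto \phi_{T,f}$ is a cochain map $\phi_T : S^\bullet(K) \to \Omega^\bullet_{elm}(T,X)$. I would then verify that $\phi_T$ is in fact a bijection in every degree. Surjectivity: by definition $\Omega^j_{elm}(T,X)$ is the linear span of elementary $j$-forms together with exterior derivatives $d\phi_{T,g}$ for $g \in S^{j-1}(K)$, but part (1) rewrites each such derivative as $\phi_{T,dg}$, which is itself an elementary form; so $\phi_T$ already hits every generator. Injectivity: if $\phi_{T,f} = 0$ then by part (2) of Proposition \ref{elem_prop}, $f = \psi(\phi_{T,f}) = 0$. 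Therefore $\phi_T$ is an isomorphism of cochain complexes and induces an isomorphism on cohomology
\[
\phi_{T*} : H^j(S^\bullet(K)) \xrightarrow{\ \sim\ } H^j_{elm}(T,X).
\]

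Second, since $T : |K| \to X$ is a triangulation, the classical comparison of simplicial and semialgebraic singular cohomology yields an isomorphism $T^* : H^k(X) \to H^k(S^\bullet(K))$, realized by sending each oriented simplex of $K$ to the corresponding semialgebraic singular simplex $\Delta^k \to X$ (via a fixed affine parametrization of the simplex followed by $T$) and dualizing. This is exactly the content of the comparison theorems cited in the Remark preceding Theorem \ref{De Rham}, namely [D], [Wo] and [EWo].

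Finally, the desired isomorphism is the composition $\Phi_T := \phi_{T*} \circ T^* : H^k(X) \to H^k_{elm}(T,X)$, which is an isomorphism as a composition of two isomorphisms. There is no real obstacle in the argument: the entire substance was loaded into Proposition \ref{elem_prop} and into the cited comparison theorem. The one subtlety worth flagging explicitly is the observation that the second family of generators $d\phi_{T,g}$ in the definition of $\Omega^j_{elm}(T,X)$ is absorbed by the first via property (1), making $\phi_T$ a genuine isomorphism of complexes rather than merely a quasi-isomorphism, which is what lets us bypass any spectral sequence or acyclic-model argument.
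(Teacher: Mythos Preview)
Your argument is correct and uses the same two ingredients as the paper (Proposition~\ref{elem_prop} and the simplicial--singular comparison), but it is organized differently and in fact slightly more efficiently. The paper defines $\Phi_T$ only at the cohomology level and then checks separately that it is well-defined, injective, and surjective as a map $H^k(X)\to H^k_{elm}(T,X)$, each verification invoking (1) and (2) of Proposition~\ref{elem_prop}. You instead observe that the assignment $f\mapsto\phi_{T,f}$ is already a bijection of cochain groups $S^j(K)\to\Omega^j_{elm}(T,X)$: surjective because property~(1) makes the generators $d\phi_{T,g}$ redundant, and injective because property~(2) gives a left inverse $\psi$. This yields an isomorphism of complexes, so the cohomology statement is immediate and the well-definedness check disappears. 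Your packaging is cleaner; the paper's version has the mild expository advantage of making explicit how $\Phi_T$ acts on a given singular cocycle, which is what is actually used later in the proof of Theorem~\ref{De Rham}.
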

\begin{proof}
Define $\Phi_T$ as follows.
Let $f$ be a closed singular $k$-cochain. Then $[f]\in Hom(H_k(X),\R)$, where $[f]$ denotes an equivalence class of $f$ in $Hom(H_k(X),\R)$. 
Since simplicial cohomology is isomorphic to singular cohomology, there exists a closed simplicial $k$-cochain $f'\in S^k(K)$ that has the same class as $f$ in singular $k$-cohomology of $X$. Note that $\phi_{T,f'}$ is a closed form by Proposition \ref{elem_prop} (1).
Set $$ \Phi_T([f]):= [\phi_{T,f'}]. $$

{\it The map $\Phi_T$ is well defined.} Indeed, if $f''$ is another element of $S^k(K)$ 
that defines the same cohomology class as $f'$ then, 
$f''-f'=dg$ for some simplicial $(k-1)$-cochain $g$, so 
$\phi_{T,f''}-\phi_{T,f'}=\phi_{T,dg}=d\phi_{T,g}$. 

{\it The map $\Phi_T$ is one to one.} Suppose that $\Phi_T([f])=[\phi_{T,f'}]$, $f-f'=dg$, $g\in C^{k-1}(X)$ and $\phi_{T,f'}=d\phi_{T,f'_1}$, $f'_1\in S^{k-1}$. It follows from Proposition \ref{elem_prop} (1) that $\phi_{T,f'}=\phi_{T,df'_1}$ and by Proposition \ref{elem_prop} (2)
we have 
$$ f'=\psi\phi_{T,f'}=\psi\phi_{T,df'_1}=df'_1. $$

{\it The map $\Phi_T$ is onto.} Let $\phi_{T,f}$ be a closed elementary $k$-form.
By Proposition \ref{elem_prop} (1) we have $0=d\phi_{T,f}=\phi_{T,df}$ and 
then, by Proposition \ref{elem_prop} (2), we get $0=\psi\phi_{T,df}=df$.
Now it is clear that $\Phi_T([f])=\phi_{T,f}$.

\end{proof}

\subsection {Proof of The De Rham Theorem}
The proof of the De Rham Theorem relies on the following Poincar\'{e} Lemma for $L^\infty$ forms. 

\begin{thm}\label{Poincare}(Poincar\'{e} Lemma) Let $\omega$ be a closed smooth $L^\infty$ $k$-form on $X\subset\R^n$ and $p\in X$. There exists a neighborhood $U_p$ of $p$ in $X$, and a smooth $L^\infty$ $(k-1)$-form $\gamma$ defined on $U_p$ such that $\omega= d\gamma$ in $U_p$.
\end{thm}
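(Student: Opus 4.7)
The plan is to prove the Poincar\'e lemma by induction on $d=\dim X$, using Theorem \ref{retraction} to reduce dimension and a stratified analogue of the classical homotopy (cone) operator. The base case $d=0$ is trivial since any sufficiently small neighborhood of $p$ reduces to $\{p\}$, on which every $k$-form with $k\geq 1$ vanishes, so one may take $\gamma=0$.

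For the inductive step, I fix a stratification $\Sigma$ of $X$ with respect to which $(\omega,\Sigma)$ is a bounded smooth stratified form, and apply Theorem \ref{retraction} at $p$: this produces $N\subset X$ with $p\in N$ and $\dim N<d$, an open neighborhood $U$ of $N$ in $X$, and a Lipschitz strong deformation retraction $H:U\times[0,1]\to U$ with $H(\cdot,0)=\mathrm{id}_U$ and $H(\cdot,1)=r$ a retraction of $U$ onto $N$, and which preserves (a refinement of) $\Sigma|_U$. By Proposition \ref{lip_infty}, $H$ is an $L^\infty$ map, so by Proposition \ref{prop_pull_back} the pullback $H^{*}\omega$ is a well-defined $L^\infty$ form on $U\times[0,1]$. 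Define the homotopy operator by fibre-integration of its $dt$-component:
\begin{equation*}
(K\omega)_x(v_1,\dots,v_{k-1}) \;=\; \int_0^1 \omega_{H(x,t)}\!\Bigl(\tfrac{\partial H}{\partial t}(x,t),\, dH(x,t)v_1,\dots,dH(x,t)v_{k-1}\Bigr)\,dt.
\end{equation*}

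Smoothness of $H$ on each stratum lets us differentiate under the integral sign, so $K\omega$ is smooth stratum-by-stratum; boundedness of $\omega$ together with the uniform Lipschitz bound on $H$ shows $|K\omega|$ is bounded; and the closed-graph (equivalently, tangential-matching) property across adjacent strata follows from Lemma \ref{cont_property} applied to the bounded stratified form $H^{*}\omega$, using the stratification-preservation of $H$. Hence $K\omega\in\Omega^{k-1}_\infty(U)$. Applying the standard Cartan homotopy identity on each stratum where everything is smooth, and using $d\omega=0$, gives
\begin{equation*}
\omega \;-\; r^{*}(\omega|_N) \;=\; d(K\omega)\quad\text{on }U.
\end{equation*}
Now $\omega|_N$ is a closed smooth $L^\infty$ $k$-form on $N$, and $\dim N<d$, so by the inductive hypothesis applied at $p\in N$ there exist a neighborhood $V$ of $p$ in $N$ and $\eta\in\Omega^{k-1}_\infty(V)$ with $\omega|_N=d\eta$ on $V$. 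Setting $U_p:=r^{-1}(V)$, a neighborhood of $p$ in $U$, we obtain
\begin{equation*}
\omega \;=\; d(K\omega) + r^{*}(d\eta) \;=\; d\bigl(K\omega + r^{*}\eta\bigr)\quad\text{on }U_p,
\end{equation*}
completing the induction.

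The main obstacle will be the verification that $K\omega$ is genuinely an $L^\infty$ form: one has to show that the fibre-integral of a stratified bounded form under a merely Lipschitz (but stratified-smooth) homotopy still has closed graph and bounded norm, and that its tangential components agree on adjacent strata. This rests on combining Lemma \ref{cont_property}, Proposition \ref{prop_pull_back}, the stratification-preservation guaranteed by Theorem \ref{retraction}, and the boundedness of $dH$ coming from the Lipschitz property (Proposition \ref{lip_infty}). Once $K$ is established as an endomorphism of the $L^\infty$ complex satisfying the homotopy identity on strata, the De Rham-style argument runs exactly as above.
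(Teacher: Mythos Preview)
Your outline is essentially the paper's Lemma \ref{weakPoincare}, but you are claiming more than that lemma delivers, and the extra claim is exactly where the difficulty lies.

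The gap is the assertion that $K\omega$ is \emph{smooth} on each stratum. Theorem \ref{retraction} only guarantees that $r|_{S\times(0,1]}$ is smooth (property 2.3), together with convergence of the \emph{first} spatial differential $d_x r_t|_S(x)\to d_x r_0|_S(x)$ as $t\to 0$ (property 2.5). Nothing controls higher $x$-derivatives of $r(x,t)$ uniformly as $t\to 0$, so you cannot differentiate under the integral sign beyond first order, and $K\omega$ is in general only continuous on each stratum with bounded weak derivative. Consequently the Cartan identity you invoke holds only in the weak sense: $\overline d(K\omega)=\omega-r_0^*\omega$, not $d(K\omega)=\omega-r_0^*\omega$. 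This is precisely what the paper establishes in Lemma \ref{weakPoincare}, and it is not yet the statement of Theorem \ref{Poincare}, which requires $\gamma$ to be a \emph{smooth} $L^\infty$ form.

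The paper closes this gap with a nontrivial smoothing procedure (Section \ref{smoothing}, adapted from Youssin): starting from the weak primitive $\gamma^0$ produced by the homotopy operator, one inductively corrects it by $\overline d\psi$-terms, stratum by stratum in increasing dimension, using Propositions \ref{prop 2.6.1}--\ref{prop 2.6.2} to smooth $\gamma$ on each $k$-stratum while keeping it small enough near the boundary that its extension via a tubular cutoff (Lemma \ref{gamma_ext}) remains an $L^\infty$ form and does not disturb smoothness on lower strata. Your proposal omits this entire step. If you want to avoid it, you would need a retraction whose restriction to $S\times[0,1]$ is $C^\infty$ with all derivatives bounded up to $t=0$; the construction in Section \ref{sec_retract} does not provide that.
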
 
We prove this theorem is in Section \ref{smoothing}.

\begin{rk}\label{rk_poincare}
In the case of a smooth form on a smooth manifold the Poincar\'e lemma can be proved for
any star shaped domain independently of the form in question. In Theorem \ref{Poincare} the neighborhood for which the theorem is formulated depends on the given form. Nevertheless, for a given closed $L^\infty$ $k$-form $\omega$ on a compact set $X\subset\R^n$, 
there exists $\eps>0$ such that for any $p\in X$ there exists an $L^\infty$ $(k-1)$-form  $\eta_p$ 
in $B(p,\eps)\cap X$ such that $\omega=d\eta_p$ there. Indeed, let $\{B(p,\eps_p)\cap X\}_{p\in X}$ be a cover of $X$ such that $\omega=d\eta_p$ on $B(p,\eps_p)\cap X$. Since $X$ is compact, we can 
pick a finite sub-cover $\{U_{p_i}\}_{i=1}^{L}$ and set $\eps$ to be the Lebesgue number for this cover.
\end{rk}
For the next theorem we will need a simple lemma from linear algebra.
\begin{lem}\label{lin_alg}
Let $V,W_1$ and $W_2$ be real finite dimensional vector spaces,
\newline 
\mbox{$ \varphi_j:V\twoheadrightarrow W_j,\  j=1,2\text{ be surjective homomorphisms}$} 
and $f\in Hom(V,\R)$ such that 
$$ker\  f \supset ker\ \varphi_1\ \cap\  ker\ \varphi_2 .$$ 
There exist $g_1\in Hom(W_1,\R)$ and $g_2\in Hom(W_2,\R)$ such that 
$$ f=\varphi_1^*g_1+\varphi_2^*g_2 . $$
\end{lem}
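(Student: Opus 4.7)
The plan is to reduce the statement to an elementary fact about extending linear functionals from a subspace, by combining $\varphi_1$ and $\varphi_2$ into a single map and observing that the hypothesis $\ker f \supset \ker\varphi_1 \cap \ker\varphi_2$ lets $f$ factor through it.

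First I would introduce the combined homomorphism
\[
\Phi : V \to W_1 \oplus W_2, \qquad \Phi(v) := (\varphi_1(v), \varphi_2(v)).
\]
By definition $\ker\Phi = \ker\varphi_1 \cap \ker\varphi_2$, which by hypothesis is contained in $\ker f$. Hence $f$ descends to a well defined linear map $\widetilde f : V/\ker\Phi \to \R$, or equivalently, identifying $V/\ker\Phi$ with $\operatorname{im}(\Phi) \subset W_1 \oplus W_2$, a linear functional
\[
\widetilde f : \operatorname{im}(\Phi) \to \R \quad\text{with}\quad f = \widetilde f \circ \Phi.
\]

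Next, since $W_1 \oplus W_2$ is finite dimensional, I would extend $\widetilde f$ to a linear functional $F : W_1 \oplus W_2 \to \R$ (for example by choosing a complementary subspace to $\operatorname{im}(\Phi)$ and declaring $F$ to be zero there). Any such $F$ decomposes uniquely as $F(w_1,w_2) = g_1(w_1) + g_2(w_2)$ with $g_j \in \operatorname{Hom}(W_j,\R)$, simply by restricting $F$ to each summand.

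Finally I would evaluate: for every $v \in V$,
\[
f(v) = F(\varphi_1(v), \varphi_2(v)) = g_1(\varphi_1(v)) + g_2(\varphi_2(v)) = (\varphi_1^* g_1 + \varphi_2^* g_2)(v),
\]
which yields $f = \varphi_1^* g_1 + \varphi_2^* g_2$. The surjectivity of the $\varphi_j$ is not actually required for this argument; it is there because in the intended application $g_1$ and $g_2$ will be viewed as genuine elements of the duals. There is no real obstacle here: the only point of substance is the factorization of $f$ through $\operatorname{im}(\Phi)$, which is immediate from the kernel hypothesis.
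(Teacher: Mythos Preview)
Your proof is correct and follows essentially the same approach as the paper: combine $\varphi_1,\varphi_2$ into a single map $\Phi$ into $W_1\oplus W_2$, factor $f$ through $\operatorname{im}(\Phi)$ using the kernel hypothesis, extend to all of $W_1\oplus W_2$, and split the extension along the two summands. Your remark that surjectivity is not actually used is also correct.
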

\begin{proof}
Let $\psi:V\to W_1\oplus W_2$, $\psi(x)=(\varphi_1(x),\varphi_2(x))$. 
Clearly $\psi$ yields an isomorphism 
$$V/(ker\ \psi)\stackrel{\sim}{\rightarrow} Im(\psi), $$
let $\xi:Im(\psi)\to V/(ker\ \psi)$ be its inverse.
Note that 
$$ ker\ \psi=ker\ \varphi_1\ \cap\ ker\ \varphi_2\ \subset ker\ f,$$ 
and therefore the functional $f$ defines
an element of $Hom(V/(ker\ \psi),\R)$ that we continue denoting by $f$.
Let $g':=\xi^*f\in Hom(Im(\psi),\R)$ and let $g$ be any extension of $g'$ to $W_1\oplus W_2$. Set 
$$ g_1(x):=g(x,0) \text{ and } g_2(x):=g(0,x). $$
We claim that $f=\varphi_1^*g_1+\varphi_2^*g_2$. Indeed, 
\begin{eqnarray*}
 \varphi_1^*g_1(x) + \varphi_2^*g_2(x) &=& g_1(\varphi_1(x))+g_2(\varphi_2(x)) \\ 
&=& 
 g(\varphi_1(x),0)+g(0,\varphi_2(x))\\ &=& 
 g(\varphi_1(x),\varphi_2(x)) \\ &=&
 g'(\varphi_1(x),\varphi_2(x))\\ &=& 
 f(\xi(\varphi_1(x),\varphi_2(x))) \\&=& 
 f(\xi(\psi(x)))\\&=&
 f(x).
 \end{eqnarray*} 

\end{proof}

\begin{thm}\label{inj}(Injectivity of the De Rham isomorphism) 
Let $X$ be a compact set and $\omega$ any smooth closed $L^\infty$ $k$-form defined on $X$ such that $\int_c \omega=0$ for any $c\in H_k(X)$, then there exists a smooth $L^\infty$ 
$(k-1)$-form $\eta$ on $X$ such that $\omega=d\eta$.
\end{thm}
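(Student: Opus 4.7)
The plan is to combine the elementary-form machinery from Section~5.1 with the local Poincar\'e Lemma (Theorem~\ref{Poincare}), using Lemma~\ref{lin_alg} as the algebraic glue and running a main induction on the form degree $k$.

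First, I would fix a triangulation $T:|K|\to X$ fine enough that every closed star $\overline{\mathrm{St}(v)}$ is contained in some ball $B(p,\eps)\cap X$ on which the Poincar\'e Lemma applies to $\omega$; this is possible by Remark~\ref{rk_poincare} together with compactness of $X$. Set $f:=\psi(\omega)$. Viewed as a simplicial cochain on $K$, $f$ is a cocycle by Stokes' Theorem (Theorem~\ref{StokesThm}) and $d\omega=0$; the hypothesis that $\int_c\omega=0$ for every $c\in H_k(X)$, together with the identification of simplicial and singular cohomology for semialgebraic sets, forces $f=dg$ for some $g\in S^{k-1}(K)$. Proposition~\ref{elem_prop} then yields $\phi_{T,f}=\phi_{T,dg}=d\phi_{T,g}$, so the elementary form $\phi_{T,f}$ is already exact in $\Omega^{\bullet}_\infty(X)$.

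It therefore suffices to show that $\omega':=\omega-\phi_{T,f}$ is exact. Here $\omega'$ is closed and satisfies $\psi(\omega')=0$ on every simplex of $K$. On each closed star the local Poincar\'e Lemma produces an $L^\infty$ primitive $\eta_v$ of $\omega'$; on intersections of stars the differences $\eta_v-\eta_w$ are closed $L^\infty$ $(k-1)$-forms. By the inductive hypothesis in degree $k-1$ (the base case $k=0$ is immediate, since a closed $0$-form whose evaluation on each $0$-cycle vanishes is identically zero), those differences are exact on the corresponding intersections, and Lemma~\ref{lin_alg} plays the role of a cocycle-splitting device converting this Mayer--Vietoris-style data into corrections $\alpha_v$ such that the adjusted forms $\eta_v-\alpha_v$ agree on overlaps. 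A smooth partition of unity subordinate to the stars then assembles them into a global $\eta\in\Omega^{k-1}_\infty(X)$ with $d\eta=\omega'$, whence $\omega=d(\eta+\phi_{T,g})$.

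The principal obstacle is the gluing step: verifying that the corrections and partition-of-unity products remain inside the $L^\infty$ category. The local primitives $\eta_v$ come with distinct stratifications and \emph{a priori} unrelated bounds, so the final $\eta$ must still have closed graph (in the sense of Lemma~\ref{cont_property}), be smooth on strata of a common refinement, and have bounded exterior derivative. Invoking the inductive hypothesis on intersections of stars also requires checking that the difference forms $\eta_v-\eta_w$ have vanishing periods on the appropriate relative cycles; this is exactly where the algebraic content of Lemma~\ref{lin_alg}, rather than a naive \v{C}ech argument, becomes indispensable.
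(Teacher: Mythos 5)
Your plan resembles the paper's proof only in that both invoke the local Poincar\'e Lemma, Lemma~\ref{lin_alg}, and the elementary-form machinery, but the gluing scheme you propose leaves a genuine gap that the paper's argument is specifically designed to avoid. You cover $X$ by stars $\mathrm{St}(v)$, take local primitives $\eta_v$, form the differences $\eta_v-\eta_w$, and then want to apply the inductive hypothesis in degree $k-1$ to these differences. But the inductive hypothesis requires that $\eta_v-\eta_w$ have vanishing integrals over \emph{all} $(k-1)$-cycles of $\mathrm{St}(v)\cap\mathrm{St}(w)$, and this is not automatic: if $c$ is such a cycle and $d_v\subset\mathrm{St}(v)$, $d_w\subset\mathrm{St}(w)$ are $k$-chains with $\partial d_v=\partial d_w=c$, then $\int_c(\eta_v-\eta_w)=\int_{d_v-d_w}\omega'$, where $d_v-d_w$ is a $k$-cycle in $\mathrm{St}(v)\cup\mathrm{St}(w)$ that need not be null-homologous in $X$, nor need it be a simplicial chain of $K$. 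Subtracting $\phi_{T,f}$ does not repair this: it makes $\psi(\omega')$ vanish on the $k$-simplices of $K$ and leaves $\omega'$ with zero periods on $H_k(X)$, but it says nothing about the integral of $\omega'$ over an arbitrary singular $k$-cycle supported in the union of two stars. You flag exactly this issue as the place where Lemma~\ref{lin_alg} ``becomes indispensable,'' but you never say how to invoke it: as stated, Lemma~\ref{lin_alg} handles precisely two surjections $\varphi_1,\varphi_2$, which fits a two-set decomposition but not the simultaneous \v{C}ech-style system of corrections $\alpha_v$ over all pairwise (and higher) overlaps of stars that your partition-of-unity gluing demands.

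The paper's proof finesses this by never confronting more than two sets at a time. In STEP~1 it takes $X=A\cup B$ with primitives $\eta_A,\eta_B$, forms $\eta_{AB}=\eta_A-\eta_B$ on the single overlap $A\cap B$, and uses Lemma~\ref{lin_alg} (with $V=H_{k-1}(A\cap B)$, $W_1=\mathrm{Im}\bigl(H_{k-1}(A\cap B)\to H_{k-1}(A)\bigr)$, $W_2=\mathrm{Im}\bigl(H_{k-1}(A\cap B)\to H_{k-1}(B)\bigr)$, $f([c])=\int_{[c]}\eta_{AB}$) together with elementary forms to produce closed $\phi_A,\phi_B$ so that $\eta_{AB}-\phi_A-\phi_B$ has zero periods; the inclusion $\ker\varphi_1\cap\ker\varphi_2\subset\ker f$ is where the hypothesis $\int_c\omega=0$ enters, via the computation $\int_{[c]}\eta_{AB}=\int_{c_1+c_2}\omega=0$. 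The induction in degree $k-1$ then applies, and a single cut-off function (not a partition of unity) glues the result into a primitive on $A\cup B$. STEP~2 iterates this, adjoining one open set from the Poincar\'e cover at a time. To salvage your approach you would need to either replace your all-at-once partition-of-unity gluing with this one-set-at-a-time iteration, or else supply a genuine \v{C}ech argument handling higher-order overlaps --- and in the latter case Lemma~\ref{lin_alg} in its present two-map form would not suffice.
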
 
\begin{proof}
We prove the theorem by induction on $k$. For $k=0$ the theorem is trivial. Let $k>0$ 
and let $\omega$ be a closed $L^\infty$ $k$-form on $X$.
\newline 
\textbf{STEP 1:}
Suppose that $X=A\cup B$, $\omega=d\eta_A$ in $A$ and $\omega=d\eta_B$ in $B$ where $A$ and $B$ are closed sets. Suppose also that $X=A'\cup B'$ where $A'\subset A$ and $B'\subset B$ are closed sets. 
We will prove in this step that there exists a smooth $L^\infty$ form $\eta$ such that $\omega=d\eta$ in $X=A\cup B$. 
Set $\eta_{AB}:=\eta_A-\eta_B$ in $A\cap B$, and note that it is a closed $(k-1)$-form there. 
We claim that there exist closed forms $\phi_A$ near $A$ and $\phi_B$ near $B$ such
that 
$$ \int_{[c]} (\eta_{AB}-\phi_A-\phi_B) =0 \text{ for any }[c]\in H_{k-1}(A\cap B)\ . $$ 
Let us assume for a moment that we have found such $\phi_A$ and $\phi_B$. Then, 
by induction hypothesis there exists a $(k-2)$-form $\xi'_{AB}$ on $A\cap B$ such that $$\eta_{AB}-\phi_A-\phi_B=d\xi'_{AB}. \ $$ 
Let $\varphi$ be a smooth function on $\R^n$ that is 
identically $1$ near $A'\cap B'$ and 
$0$ near  $X-A\cap B$. Let 
$\xi_{AB}=\varphi\xi'_{AB}$ and set 

\[
\eta = \left\{ \begin{array}{ll}
         \eta_A-\phi_A & \text{ on } A'  \\
        \eta_B+d\xi_{AB}+\phi_B & \text{ on }  B'\ .
         \end{array} \right.
\]
Clearly $\eta$ is a well defined form and $d\eta=\omega$. 

Now we only have to find the forms $\phi_A$ and $\phi_B$. 
In order to apply Lemma \ref{lin_alg} we set 
$$V:=H_{k-1}(A\cap B)\ ,\  W_1:=Im(H_{k-1}(A\cap B)\to H_{k-1}(A))\ , $$ 
$$ W_2:=Im(H_{k-1}(A\cap B)\to H_{k-1}(B))\ \text{ and } f([c])=\int_{[c]}\eta_{AB}\ ,$$ 
with $\varphi_1$ and $\varphi_2$ being the maps induced by inclusions of $A\cap B$ into $A$ and $B$. 
Note that 
$$ ker\ f\ \supset ker\ \varphi_1\ \cap\  ker\ \varphi_2$$
since if 
$\varphi_1([c])=\varphi_2([c])=0$ for $[c]\in H_{k-1}(A\cap B)$ 
then there exist chains $c_1$ in $A$ and $-c_2$ in $B$ such that
$c=\pa c_1=-\pa c_2$, but then 
$$f([c])=\int_{[c]}\eta_A-\eta_B=\int_{\pa c_1}\eta_A + \int_{\pa c_2}\eta_B
=\int_{c_1+c_2}\omega=0\ . $$  
Therefore, by Lemma \ref{lin_alg} there exist $g_1\in Hom(W_1,\R)$ and $g_2\in Hom(W_2,\R)$ such that 
$f=\varphi^*g_1+\varphi^*g_2$. Since we work over $\R$, we may identify $H^{k}(Z)$ with 
$Hom(H_k(Z),\R)$ for any space $Z$. Let $g'_1$ and $g'_2$ be any extensions of $g_1$ and $g_2$ to $H^{k-1}(A)$ and $H^{k-1}(B)$. Set $\phi_A$ and $\phi_B$ to be the elementary forms 
corresponding to $g_1'$ and $g_2'$ (see Sections \ref{elm_form} and Proposition \ref{cohom_elem_isom}). Since $g'_1$ and $g'_2$ are closed cochains, the forms
$\phi_A$ and $\phi_B$ are closed. Since $\int_{[c]} \phi_A=g'_1([c])$ for $[c]\in H_{k-1}(A)$ and
$\int_{[c]} \phi_B=g'_2([c])$ for $[c]\in H_{k-1}(B)$ it follows that for any $[c]\in H_{k-1}(A\cap B)$
$$ \int_{[c]}(\eta_{AB}-\phi_A-\phi_B)=f([c])-g'_1([c])-g'_2([c])= 0\ . $$
And this  concludes the first step.
\newline
\textbf{STEP 2:}
Let $\U=\{U_i\}_{i=1}^N$ be a cover of $X$ such that
$\omega=d\eta_i$ near each $U_i$ which is possible due to compactness of $X$ 
and Theorem \ref{Poincare} (see Remark \ref{rk_poincare}). Set $V_l=\bigcup_{i=1}^{l}U_i$.
We claim that for each $l\leq N$ there exists a form $\xi_l$ 
near $V_l$ such that $\omega=d\xi_l$ there.
We prove this claim by induction on $l$. 
For $l=1$ the claim follows by the choice of the cover. 
Let $l>1$.  
By STEP 1, applied to $V_l$ and $U_{l+1}$ there exists 
a form $\xi_{l+1}$ near $V_{l+1}$ such that $\omega=d\xi_{l+1}$.
Therefore the claim is proven and the theorem follows for $l=N$.
\end{proof}

\textit{Proof of Theorem \ref{De Rham}.\ } (Compare with [W] Ch. IV ,27).
Theorem \ref{inj} implies that the map induced by $\psi$ on cohomology is injective. 
To see that the latter map induces a surjective map on cohomology, let $f$ be a closed 
singular $k$-cochain. Let $T$ be any triangulation of $X$ and choose $f'$ to be a simplicial cochain
in the same cohomology class as $f$. 
By Proposition \ref{cohom_elem_isom}, all the forms in $\Phi_T([f])$ have the same 
cohomology class in $H^k_{elm}(X)$. So, let $\phi_{T,f'}\in\Phi_{T}([f])$. By Proposition \ref{elem_prop} (2) we have $\psi \phi_{f',T} = f' $.
$\ \ \square $

\section {Lipschitz Retractions}\label{sec_retract}
One of the main ingredients for the proof of $L^\infty$ version of Poincar\'e lemma, Theorem \ref{Poincare}, is a Lipschitz strong deformation retraction that preserves a certain stratification. The main result of this section is 
\begin{thm}(Retraction Theorem)\label{retraction}
Let $(X,\Sigma_X)$ be a stratified set in $\R^n$ and $p\in X$ then:
\begin{enumerate}[1.]
\item There exists a stratified neighborhood $(U,\Sigma_U)$ of $p$ in $X$ such that $\Sigma_U\prec\Sigma_X\cap U$.
\item\label{L2ret} There exists $N\subset U$, $p\in N$, $\dim N< \dim U$ and Lipschitz strong deformation retraction $r:U\times [0,1]\to \ U$ to $N$ such that
  \begin{enumerate}[\ref{L2ret}.1]
    \item $\Sigma_N:=\Sigma_U\cap N$ is a stratification of $N$.
    \item $r_0(x)\in N$ and $r_1(x)=x$ where $r_t(x):=r(x,t)$ for $t\in[0,1]$.
    \item $r|_{S\times(0,1]}$ is smooth and $r(S\times(0,1])\subset S$ for any stratum $S\in     \Sigma_U$. 
    \item For any $S\in\Sigma_U$ there exists $S'\in\Sigma_N$ such that $r_0(S)\subset S'$.
    \item $d_x r_t|_S(x)\to d_x r_0|_S (x)$ as $t\to 0$, for any $x\in S\in\Sigma_U$.
   \end{enumerate}
\end{enumerate}
\end{thm}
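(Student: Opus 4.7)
The plan is to model $U$ near $p$ as a cone on its link via a stratification-preserving bi-Lipschitz parametrization, and then take $r$ to be the pullback of the standard cone scaling. The conic parametrization will be obtained as the time-$t$ flow of a suitably chosen stratified ``radial'' vector field, in the spirit of the Lipschitz conic structure theorem for semialgebraic sets.

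\textbf{Cylindrical refinement and vector field.} Translate so that $p = 0$. By Lemma \ref{str_front}, take a cylindrical cell decomposition of $\R^n$ that is compatible with the strata of $\Sigma_X$ and with small closed balls and spheres around $p$, and that satisfies the frontier condition. For $\epsilon > 0$ small, set $U := X \cap B(0, \epsilon)$ and let $\Sigma_U$ be the induced decomposition of $U$; this yields part (1). After a generic linear change of coordinates, the radial vector field $x \mapsto x$ is not tangent to any positive-dimensional stratum of $\Sigma_U$ on $U \setminus \{p\}$. For each stratum $S$, let $V_S$ be the orthogonal projection of $x \mapsto x$ onto $TS$. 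The plan is to glue the $V_S$ into a single vector field $V$ on $U \setminus \{p\}$ via a partition of unity adapted to semialgebraic tube neighborhoods of the strata, so that $V$ is tangent to every stratum, satisfies $\langle V(x), x \rangle \geq c\, |x|^2$ for some $c > 0$, and is Lipschitz on $U$.

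\textbf{Conic parametrization and definition of $r$.} Let $L := U \cap S(0, \epsilon)$ be the link of $p$, carrying the stratification $\Sigma_L := \Sigma_U \cap L$. The backward flow of a suitable rescaling of $V$ gives a stratification-preserving bi-Lipschitz map
\begin{equation*}
\Phi \colon \bigl([0,1] \times L\bigr)\bigl/\bigl(\{0\} \times L\bigr) \longrightarrow U, \qquad \Phi(0,\ell) = p, \ \ \Phi(1,\ell) = \ell,
\end{equation*}
such that $\Phi\bigl((0,1] \times T\bigr)$ is a stratum of $\Sigma_U$ for every $T \in \Sigma_L$, and $|\Phi(s, \ell)| \asymp s$. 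Set $N := \{p\}$, so that $\dim N = 0 < \dim U$ and $\Sigma_N = \{\{p\}\}$, and for $x \in U$ with $\Phi^{-1}(x) = (s,\ell)$ define $r(x,t) := \Phi(ts, \ell)$. Conditions (2.1) and (2.4) are immediate because $N$ is a single point; (2.2) follows from $r_1 = \mathrm{id}$ and $r_0 \equiv p$; (2.3) holds because strata of $U$ other than $\{p\}$ correspond under $\Phi$ to cylindrical sets $(0,1] \times T$, preserved by $(s,\ell)\mapsto (ts,\ell)$ for $t>0$, and $\Phi$ is smooth on strata. For (2.5), $r_0$ is constant so $d_x r_0 = 0$, and the chain rule together with the cone scaling gives $d_x r_t \to 0$ as $t \to 0^+$. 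The global Lipschitzness of $r$ is inherited from that of $\Phi$ and its inverse, composed with the Lipschitz scaling $s \mapsto ts$.

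\textbf{Main obstacle.} The delicate step is the construction of the stratified radial vector field $V$ with \emph{Lipschitz} regularity across strata of different dimensions. Near a lower-dimensional stratum the angles between the tangent spaces of adjacent strata can degenerate, and a naive partition-of-unity gluing of the pieces $V_S$ produces only a continuous vector field. Semialgebraic Lojasiewicz-type estimates for the Grassmannian distance between tangent spaces along adjacent strata, combined with the generic choice of coordinates and the cylindrical structure from the first step, are what force the Lipschitz bounds to hold uniformly in a neighborhood of $p$. Once $V$ has been produced with these properties, the remainder of the argument is a routine flow/ODE computation.
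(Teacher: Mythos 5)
Your proposal takes a genuinely different route from the paper, but it is not a proof: the step you yourself flag as ``the main obstacle'' --- constructing a single stratified \emph{Lipschitz} radial vector field $V$ on $U\setminus\{p\}$ --- is precisely the whole content of the theorem, and you do not supply it. Gluing the tangential projections $V_S$ by a partition of unity over tubular neighborhoods gives, in general, only a \emph{continuous} controlled vector field (Thom--Mather theory), not a Lipschitz one. The Whitney (a) condition guarantees containment of limit tangent planes but gives no rate: the Grassmannian distance between $T_xS$ and $T_{x'}S'$ for $S'\leq S$, $x\to x'$, satisfies only a {\L}ojasiewicz (H\"older-type) estimate, which is strictly weaker than the Lipschitz bound you need on $|V_S(x)-V_{S'}(x')|/|x-x'|$ across strata. ``Generic coordinates plus cylindrical structure'' does not upgrade this by itself; indeed the entire reason the paper's proof is long and inductive is that it works around exactly this difficulty. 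A second, smaller gap: with $N=\{p\}$, condition 2.3 combined with 2.2 forces $p\in\overline S$ for every $S\in\Sigma_U$, so you must arrange that the stratification near $p$ is genuinely conical (every stratum other than $\{p\}$ has $p$ in its boundary); this does not follow from merely intersecting a cylindrical decomposition compatible with a ball, and needs an argument (or you must discard the extra cells). Finally, your flow $\Phi$ must be smooth on strata for the derivative-convergence clause 2.5 to make sense, so $V$ must be required to restrict smoothly to each stratum, which you state only implicitly.

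The paper avoids vector fields entirely. It proves a stronger inductive statement $\mathbf{(H_n)}$ by induction on ambient dimension: it uses the regular families of hypersurfaces of Theorem \ref{prop_3_10} to build a bi-Lipschitz straightening $h:\R^{n+1}\to\R^{n+1}$ turning the relevant sets into graphs and bands of Lipschitz functions over $\R^n$, prepares a family of control functions $\xi_j$ as products of distances (Theorem \ref{lem function eq aux distances}), applies the induction hypothesis in $\R^n$ to get $(g', r', N')$, and then lifts $r'$ by the explicit formula $r(q,t)=(r'(x,t),\,\tau(q)\theta_{j+1}(r'(x,t))+(1-\tau(q))\theta_j(r'(x,t)))$, with $N=(N'\times\R)\cap U$. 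Lipschitzness is established by direct differentiation of this formula, as in the display (\ref{der_est}), not by any ODE/Gronwall argument. The crucial advantage of the paper's method is that at each step the Lipschitz control comes for free from composing with Lipschitz functions $\theta_j,\eta_j$ defined by the hypersurface machinery, whereas your route bottlenecks on proving Lipschitzness of a glued vector field. If you want to pursue the vector-field plan, you would essentially need to reprove the bi-Lipschitz local conic structure of semialgebraic sets in a strata-compatible form (which is itself a result of comparable depth to what you are trying to show), rather than quoting {\L}ojasiewicz estimates as if they directly yield Lipschitz bounds.
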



This section is organized as follows.
In the first part we establish some useful technical tools and prove Proposition \ref{lip_infty}.
In the second part we prove Theorem \ref{retraction}.
In what follows we will use the following notation.
\begin{notation}
Suppose that $f:X\to Y$ is a map. Set $\gamma_f:X\to X\times Y$ to be the map defined 
by $\gamma_f(x):=(x,f(x))$.
\end{notation}

\begin{lem}\label{lem graph a reg hor c1}
Let  $\xi:(X,\Sigma) \to \R$ be a Lipschitz function that is smooth on every stratum $S\in\Sigma$. 
Assume that $\gamma_\xi (\Sigma)$ is a Whitney $(a)$ stratification of $\gamma_\xi (X)$. Then $\gamma_\xi$ is a semi-differentiable map with respect to $\Sigma$ and $\gamma_\xi(\Sigma)$.
\end{lem}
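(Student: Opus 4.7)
The plan is to verify the two conditions of Definition \ref{semi_diff_strat} directly. Condition (1) is immediate: by hypothesis $\gamma_\xi(\Sigma)$ is a stratification, and for any $S\in\Sigma$, $\gamma_\xi|_S$ is smooth (since $\xi|_S$ is), mapping $S$ diffeomorphically onto the stratum $\gamma_\xi(S)\in\gamma_\xi(\Sigma)$. For condition (2), the first $n$ components of $\gamma_\xi$ are the coordinate functions $x_i$, whose differentials $dx_i$ are constant $1$-forms and hence trivially define smooth stratified forms on $(X,\Sigma)$. The only real content is to show that $d\xi$ is a stratified form on $(X,\Sigma)$, i.e.\ that its graph is closed in $\wedge^1 TX\times\R$.

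The boundedness of the functionals $d\xi|_S$ is automatic from the Lipschitz hypothesis, with bound the Lipschitz constant $K$. For the closed-graph property, fix strata $S'\leq S$, a sequence $p_n\in S$ converging to $p\in S'$, and tangent vectors $v_n\in T_{p_n}S$ with $v_n\to v\in T_pS'$; I need to show
\[
d\xi|_S(p_n;v_n)\ \longrightarrow\ d\xi|_{S'}(p;v).
\]
I will argue by a subsequence argument: it suffices to show every subsequence of $\{d\xi|_S(p_n;v_n)\}$ has a further subsequence converging to $d\xi|_{S'}(p;v)$. Passing to a subsequence, the bound $|d\xi|_S(p_n;v_n)|\le K|v_n|$ gives convergence $d\xi|_S(p_n;v_n)\to a$ for some $a\in\R$, and a further subsequence gives $T_{p_n}S\to L$ in the Grassmannian (with $\dim L=\dim S$).

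The key step now is to identify $a$ using the Whitney $(a)$ hypothesis on $\gamma_\xi(\Sigma)$. Set $T=\gamma_\xi(S)$, $T'=\gamma_\xi(S')$, $y_n=\gamma_\xi(p_n)$, $y=\gamma_\xi(p)$. The tangent space $T_{y_n}T$ is precisely the graph of the linear functional $\ell_n:=d\xi|_S(p_n;\cdot)$ on $T_{p_n}S$. Since the slopes $\ell_n$ are uniformly bounded by $K$, after a further extraction I may assume $T_{y_n}T\to\tau$ in $\G_{n+1}^{\dim S}$, and the uniform slope bound forces $\tau$ to remain the graph of a linear functional $\ell\colon L\to\R$ (the projection to $\R^n$ stays injective on $\tau$). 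Whitney $(a)$ applied at $y\in T'$ yields $T_yT'\subset\tau$; since $T_yT'$ is the graph of $d\xi|_{S'}(p;\cdot)$ over $T_pS'\subset L$, this containment forces $\ell|_{T_pS'}=d\xi|_{S'}(p;\cdot)$. Finally, the pair $(v_n,\ell_n(v_n))\in T_{y_n}T$ converges to $(v,a)\in\tau$, whence $a=\ell(v)=d\xi|_{S'}(p;v)$, as required.

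The main obstacle is precisely the identification of the limit space $\tau$: showing that it is still the graph of a linear functional (not of a multi-valued limit with ``vertical'' directions), and that this functional restricts on $T_pS'$ to $d\xi|_{S'}(p;\cdot)$. Both points rest on exactly the two hypotheses at hand: the Lipschitz property of $\xi$ rules out vertical directions in $\tau$, and the Whitney $(a)$ assumption on $\gamma_\xi(\Sigma)$ forces the graph of $d\xi|_{S'}(p;\cdot)$ to lie inside $\tau$. Once this is in place, the lemma follows.
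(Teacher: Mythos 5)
Your proof is correct and follows essentially the same strategy as the paper's: pass to a subsequence where the graph's tangent planes converge to a limit $\tau$, use Whitney $(a)$ to obtain $T_{\gamma_\xi(p)}\gamma_\xi(S')\subset\tau$, and conclude by uniqueness of the vector in $\tau$ projecting to a given base vector. The only notable difference is that you make explicit the point the paper leaves implicit, namely that the Lipschitz bound is what rules out vertical directions in $\tau$ and hence makes the projection to $\R^n$ injective on $\tau$; this is a welcome clarification, not a divergence in method.
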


\begin{proof}
Let $S\in \Sigma$ be a stratum and let $x_n\in S$ be a sequence
tending to $x\in S'\in \Sigma$. 
Let $u_n \in T_{x_n}S$ be a sequence of vectors tending to a vector $u \in T_x S'$.
The vector $v_n : =(u_n;d_{x_n} \xi|_S\cdot u_n)$ (resp.
$v:=(u;d_x \xi|_{S'}\cdot u))$ is the unique vector of $T_{\gamma_\xi (x_n )} \gamma_\xi (S)$
(resp. $T_{\gamma_\xi(x)} \gamma_\xi (S')$) that projects onto $u_n$ (resp. $u$). Assume that
the sequence $v_n$ does not tend to $v$. Extracting a
subsequence, if necessary, we may assume that $v_n$ has another limit,
$v_1 \neq v$, and that the limit
$\tau :=\lim T_{\gamma_\xi (x_n )} \gamma_\xi (S)$ exists.
The vector $v$ lies in $\tau$ and is actually the unique vector that projects onto $u$.
By Whitney $(a)$ condition $T_{\gamma_\xi(x)} \gamma_\xi (S') \subset \tau$ and hence $v=v_1$.
This is a contradiction, therefore $v_n \to v$ .
\end{proof}
\begin{cor}\label{Lip_semi_diff}
Let $X\subset\R^n$ be a set and $\xi:X\to \R$ be a Lipschitz function. There exists a stratification $\Sigma$ of $X$
such that $\xi$ is semi-differentiable with respect to $\Sigma$ and the trivial stratification $\{\R\}$ of $\R$. 
\end{cor}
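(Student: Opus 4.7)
The plan is to realise $\xi$ as the composition $\xi = \pi_2 \circ \gamma_\xi$, where $\gamma_\xi : X \to X \times \R$ is the graph map from the preceding notation and $\pi_2 : \R^n \times \R \to \R$ is the second projection, and then to deduce semi-differentiability of $\xi$ from Lemma \ref{lem graph a reg hor c1} together with Proposition \ref{comp_semi_diff}.

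First I would invoke the standard semialgebraic stratification theorems to build a stratification of $X$ whose strata are smooth semialgebraic manifolds on each of which $\xi$ is smooth. Since the graph $\gamma_\xi(X)\subset \R^{n+1}$ is itself semialgebraic, this initial stratification can then be refined to a stratification $\Sigma$ of $X$ for which the induced decomposition $\gamma_\xi(\Sigma)$ of $\gamma_\xi(X)$ is Whitney $(a)$-regular. The existence of such a Whitney-regular refinement compatible with $\gamma_\xi(X)$ is a classical fact of semialgebraic geometry, and is the only non-elementary ingredient of the argument.

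With such a $\Sigma$ in hand, Lemma \ref{lem graph a reg hor c1} immediately tells us that $\gamma_\xi$ is semi-differentiable with respect to $\Sigma$ and $\gamma_\xi(\Sigma)$. The projection $\pi_2$ is the restriction of a continuous linear map on $\R^{n+1}$, so its restriction to every stratum of $\gamma_\xi(\Sigma)$ is smooth, and its differential is the ambient constant $1$-form $dt$, whose graph in $\wedge^1 T\gamma_\xi(X)\times\R$ is closed simply by continuity of $(y;v)\mapsto dt(v)$. Hence $\pi_2|_{\gamma_\xi(X)}$ is semi-differentiable with respect to $\gamma_\xi(\Sigma)$ and the trivial stratification $\{\R\}$. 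Applying Proposition \ref{comp_semi_diff} to the composition $\xi = \pi_2 \circ \gamma_\xi$ yields semi-differentiability of $\xi$ with respect to $\Sigma$ and $\{\R\}$, as required.

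The main obstacle is the refinement step in the second paragraph: everything else is either a direct application of the preceding lemma/proposition or a triviality about linear projections. The use of Whitney $(a)$-regularity there is essential, because it is precisely this condition that makes the tangent spaces of $\gamma_\xi(\Sigma)$ behave well along frontier incidences, and that in turn is what forces the differential of $\xi$ to extend continuously from one stratum to an adjacent one.
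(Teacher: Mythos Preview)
Your proposal is correct and follows essentially the same route as the paper: factor $\xi=\pi_1\circ\gamma_\xi$, produce a stratification $\Sigma$ of $X$ with $\gamma_\xi(\Sigma)$ Whitney~$(a)$, invoke Lemma~\ref{lem graph a reg hor c1} for $\gamma_\xi$, observe that the linear projection is trivially semi-differentiable, and conclude via Proposition~\ref{comp_semi_diff}. The only cosmetic difference is that the paper obtains $\Sigma$ by first taking a Whitney~$(a)$ stratification $\Sigma_\xi$ of the graph $\Gamma_\xi(X)$ and then setting $\Sigma:=\pi(\Sigma_\xi)$, whereas you phrase it as refining a stratification of $X$; since $\gamma_\xi$ is a bijection onto its graph these are equivalent constructions.
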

\begin{proof}
Let $\pi:\R^n\times\R\to\R^n$ be the standard projection to the first $n$ components
and $\pi_1:\R^n\times\R\to\R$ be the standard projection to the last component.
Let $\Sigma_\xi$ be a Whitney (A) stratification of $\Gamma_{\xi}(X)$. Set $\Sigma:=\pi(\Sigma_\xi)$.
By Lemma \ref{lem graph a reg hor c1} we know that the map $\gamma_\xi$ is semi-differentiable
with respect to $\Sigma$ and $\Sigma_\xi$. 
Note that the map $\pi_1$ is smooth on $\R^n\times\R$ and 
therefore it is semi-differentiable with respect
to $\Sigma_\xi$ and $\{\R\}$.
Since $\xi=\pi_1\circ \gamma_\xi$ it follows from Proposition \ref{comp_semi_diff} that
$\xi$ is semi-differentiable with respect to $\Sigma$ and $\{\R\}$.
\end{proof}
We turn now to prove Proposition \ref{lip_infty}.
\begin{proof}
We have already mentioned that an $L^\infty$ map is Lipschitz. For the other implication 
denote by $f_i:X\to\R$ the components of $f$, $i=1,\dots,m$. 
By Corollary \ref{Lip_semi_diff} there exist stratifications $\Sigma_i$, $i=1,\dots,m$ 
of $X$ such that each $f_i$ is semi-differentiable with respect to $\Sigma_i$ and $\{\R\}$.
Let $\Sigma_0$ be a common refinement of all the $\Sigma_i$'s.
Let $\Sigma_Y$ be a stratification of $Y$ compatible with $f(\Sigma_0)$.
Let $\Sigma_X$ to be a refinement of $\Sigma_0$ that is compatible with $f^{-1}(\Sigma_Y)$.
It follows that for each $S'\in\Sigma_X$ there exists a stratum $S\in\Sigma_Y$ such that $f(S')\subset S$. 
\end{proof}

In what follows we will use the following notations. Set $e_i$ , $i=1,...,n$ to be the standard basis
of $\R^n$ and $S^{n-1}$ to be the unit sphere of $\R^n$.
Let $\lambda \in S^{n-1}\subset \R^n$, 
denote by $N_\lambda$ the normal space to $\lambda$ in $\R^n$ and 
by $\pi_\lambda$ the projection onto
$N_\lambda$. 
Given $q \in \R^n$ , $q_\lambda$ denotes the coordinate along $\lambda$.
We say that a set $H\subset\R^{n+1}$ is a graph for $\lambda$ if there exists a function
$\xi : \R^n \to \R $ such that
$$ H=\{ q\in\R^{n+1} : q_\lambda=\xi(\pi_\lambda(q)) \} .$$

\begin{df} A {\bf Lipschitz cell decomposition of $\R^n$} is a cylindrical cell decomposition $\C$ of $\R^n$ (see Definition \ref{ccd}) which is also a stratification such that for $n>1$ each cell $C\in\C$ is either a graph of a Lipschitz function or a band delimited by two Lipschitz functions over some cell $C'$ in $\R^{n-1}$.
The vector $e_n$ is said to be {\bf regular} for $\C$ if for each cell $C\in\C$
on which $\pi_n:=\pi_{e_n}$ is one-to-one, there exists a Lipschitz function $\xi:\pi_n(C)\to \R$
such that $C$ is the graph of $\xi$ over $\pi_n(C)$.
\end{df}

The proof of Theorem \ref{retraction} relies on technique developed in [V1].
For completeness, we state some terminology and results from [V1] that will
be used in the proof.

\begin{df}\label{family regulier d hypersurface}
A {\bf regular family of hypersurfaces} of $\R^{n+1}$ is a family
$H=(H_k;\lambda_k)_{1 \leq k \leq b}$ with $b\in\N$, of subsets of
$\R^{n+1}$ together with elements $\lambda_k$ of $S^n$ such that the following
properties hold for each $k < b $:

\begin{itemize}
 \item[(i)]  The sets $H_k$ and $H_{k+1}$ are respectively the
 graphs
 for $\lambda_k$ of two global Lipschitz functions $\xi_k$ and $\xi'_k$
 such that $\xi_k \leq \xi'_k$.
\item[(ii)]  We have:
$$E(H_{k+1};\lambda_k)=E(H_{k+1};\lambda_{k+1}), $$
\end{itemize}
where 
$$E(H_k,\lambda_k)=\{ q\in\R^{n+1} : q_\lambda \leq \xi( \pi_\lambda (q)) \} \ .$$
Let $A$ be a  subset of $\R^{n+1}$ of empty interior. We say that
the family $H$ is {\bf compatible} with $A$, if $A \subseteq
\bigcup_{k=1} ^{b} H_k$. An {\bf extension} of $H$ is a regular
family compatible with the set $\bigcup_{k=1} ^b H_k$.

\end{df}

\begin{df}\label{boule  reguliere}
Let $A$ be a set of $\R^{n+1}$. An element $\lambda$ of $S^n $
is said to be {\bf regular } for $A$ if there is $\alpha >0 $
such that
$$d(\lambda;T_x A_{reg}) \geq \alpha$$
for any $x \in A_{reg}$ where $A_{reg}$ denotes the regular (smooth) part of $A$.  
\end{df}

Given two functions $f,g: A\to\R$ we say that $f$ is {\bf equivalent} to $g$,  $f\sim g$,
if there exist $c_1>0$ and $c_2>0$ such that $c_1f\leq g\leq c_2f$.
If $f \leq c_1 g$, we write $f\lesssim g$. 
We say that $f$ is {\bf comparable} with $g$ if
the difference $f-g$ has a constant sign.

\begin{thm} \label{prop_3_10} {\bf [V1]}
For each semi-algebraic set $A\subset\R^{n+1}$ of empty interior, there exists
a regular family of hypersurfaces of $\R^{n+1}$ compatible with $A$ .
\end{thm}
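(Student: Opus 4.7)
The plan is induction on $n$, with the case $n=0$ trivial: $A$ is a finite set of points in $\R$, ordered along the line, each one serving as a "graph" for the unique direction of $S^0$.

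For the inductive step, I would first produce finitely many Lipschitz hypersurfaces covering $A$, each a graph for some chosen direction. Because $A$ has empty interior, at every $x\in A_{reg}$ the set of $\lambda\in S^n$ regular for $A$ near $x$ (in the sense of Definition \ref{boule  reguliere}) is a nonempty semialgebraic open subset of $S^n$: its complement is a closed neighborhood of the limits of tangent $n$-planes to $A$ at $x$ inside $\G^n_{n+1}$. By a semialgebraic partition argument and compactness, I would select finitely many directions $\mu_1,\dots,\mu_\ell\in S^n$ together with a semialgebraic decomposition $A=A_1\cup\cdots\cup A_\ell$ such that $\mu_j$ is regular for $A_j$, and $A_j$ is the graph of a Lipschitz function $\xi_j$ defined on $\pi_{\mu_j}(A_j)\subset N_{\mu_j}$. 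A McShane--Whitney extension then yields globally defined Lipschitz functions $\xi_j:N_{\mu_j}\to\R$ whose graphs $H_j$ cover $A$.

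The second step is to assemble the $H_j$ into an ordered sequence satisfying conditions (i) and (ii) of Definition \ref{family regulier d hypersurface}. Within a subfamily sharing a single direction $\mu_j$, I would refine along loci where two graphs cross and then order by the pointwise comparison of the defining functions, so that successive hypersurfaces are pointwise comparable and (i) holds within each group. To transition from direction $\mu_j$ to $\mu_{j+1}$, I would insert a "bridging" hypersurface that is simultaneously a Lipschitz graph for both directions; this is possible when $\mu_j$ and $\mu_{j+1}$ are close enough on $S^n$, because being a Lipschitz graph is an open condition on the direction. For directions that are far apart I would interpolate through a finite chain of directions in $S^n$ and insert a bridge at each step, placing the bridges so that the "below" region is preserved across the direction change, which yields (ii).

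The main obstacle is keeping the family finite. Each refinement along an ordering-change locus and each bridge introduces new semialgebraic loci that might demand further refinement, so one must argue termination. The natural way is to observe that each such refinement corresponds to a semialgebraic cell subdivision of the base $N_\lambda$, and that its combinatorial complexity (number of cells, or degree of the defining polynomials) decreases strictly at each step by Tarski--Seidenberg; hence the process terminates after finitely many iterations. An alternative, arguably cleaner approach would begin with a Lipschitz cell decomposition of $\R^{n+1}$ compatible with $A$ (cf.\ Definition \ref{ccd} adapted to the Lipschitz setting) and read off the desired family from the vertical stacks of cells over a common base in $\R^n$, rotating coordinates whenever a top-dimensional stratum of $A$ fails to admit $e_{n+1}$ as a regular direction; the induction hypothesis applied to the projection $\pi_{n+1}(A)\subset\R^n$ produces the needed underlying structure.
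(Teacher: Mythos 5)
The paper does not actually prove this theorem; it is imported verbatim from [V1] (Valette, \emph{Lipschitz triangulations}), so there is no in-paper proof to compare against. Your sketch nevertheless follows the general strategy of [V1]: induction on the ambient dimension, choosing regular directions (nonempty because $A$ has empty interior so the tangent $n$-planes to $A_{reg}$ omit an open subset of $S^n$), writing pieces of $A$ as Lipschitz graphs over $N_\lambda$, extending by McShane--Whitney, and assembling into an ordered family.

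Two steps of your argument are, however, genuinely incomplete. First, the termination claim does not hold up: you assert that the ``combinatorial complexity\dots decreases strictly at each step by Tarski--Seidenberg,'' but Tarski--Seidenberg furnishes no such monotone invariant. Refining along crossing loci \emph{increases} cell counts, and degree bounds under projection and intersection also grow; iterating ``refine, re-order, bridge'' has no a priori reason to halt. In the actual construction, finiteness is built into the induction (apply the inductive hypothesis once to a fixed projection to $\R^n$ to get a finite decomposition, then lift), not recovered after the fact by a descent argument. Second, the ``bridging'' across direction changes is only asserted. Condition (ii), $E(H_{k+1};\lambda_k)=E(H_{k+1};\lambda_{k+1})$, requires $H_{k+1}$ to be a \emph{global} Lipschitz graph for both $\lambda_k$ and $\lambda_{k+1}$ with Lipschitz constant small enough relative to the angle between them; ``being a Lipschitz graph is an open condition on the direction'' is true only locally and with degrading constants, and the regular directions $\mu_j$ for distinct pieces of $A$ need not be close. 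Constructing a finite interpolating chain of directions together with uniformly Lipschitz bridges realising (ii) at each step is precisely the delicate content of the theorem, not a routine afterthought. Your ``cleaner alternative'' of reading the family off a Lipschitz cell decomposition compatible with $A$ is circular, since the existence of such decompositions is proved \emph{from} this very theorem. (A small slip: $S^0$ has two elements, not ``the unique direction,'' though this does not affect the base case.)
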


\begin{thm}\label{lem function eq aux distances}{\bf [V1]}
Given a function $f$ on $\R^n$, there exist a finite number of
subsets $W_1, \dots,W_s$, and a partition of $\R^n$ such that $f$
is equivalent to a product of powers of distances to the $W_j$'s
on each element of the partition.
\end{thm}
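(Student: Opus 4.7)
The plan is to induct on $n$, with the base case supplied by Puiseux expansion and the inductive step by a cylindrical Lipschitz cell decomposition combined with a parametrized one-dimensional analysis.

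For $n=1$: a semialgebraic function has only finitely many critical and discontinuity points, so partition $\R$ into these points and the open intervals between them. On each open interval, Puiseux expansion at each finite endpoint $a$ gives $f(x) \sim |x-a|^\alpha$ near $a$ for some rational $\alpha$. Splitting each interval into two halves, one near each finite endpoint, and taking the $W_j$'s to be the set of these endpoints (viewed as singletons) yields the conclusion.

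For the inductive step from $n-1$ to $n$: I would apply Theorem \ref{prop_3_10} to get a regular family $(H_k;\lambda_k)$ of Lipschitz hypersurfaces of $\R^n$ compatible with the singularities of $f$ and with the semialgebraic sets governing the shape of $f$ (zero-locus, loci where various partial derivatives change sign, etc.). Refine this into a Lipschitz cylindrical cell decomposition $\C$ of $\R^n$ with $e_n$ regular, so that every cell is either a Lipschitz graph $\Gamma_\xi$ or a Lipschitz band $\{(y,t):y\in C',\, \xi_1(y)<t<\xi_2(y)\}$ over a cell $C'\subset\R^{n-1}$. On a graph cell, the parametrization $y\mapsto(y,\xi(y))$ is bi-Lipschitz, so $\tilde f(y):=f(y,\xi(y))$ is a semialgebraic function on $C'\subset\R^{n-1}$ and the induction hypothesis applies directly; distances in $\R^{n-1}$ to the inductively produced sets $V_j$ are comparable to distances in $\R^n$ to their lifts $\{(y,\xi(y)):y\in V_j\}$ by bi-Lipschitz equivalence. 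On a band cell, I would fix $y$ and use the one-dimensional case in $t$ to obtain $f(y,t)\sim (t-\xi_1(y))^{\alpha(y)}(\xi_2(y)-t)^{\beta(y)}\,g(y)$ on two subbands (one near each wall), then refine $C'$ further so that $\alpha,\beta$ become constant and $g$ is a well-defined semialgebraic function on $C'$, and apply induction to $g$. The final collection of $W_j$'s is the union of the (closures of) the graph cells used as walls together with all lifts of the inductively chosen sets.

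The key point making the distance formula work is that the cells are Lipschitz graphs and $e_n$ is regular for $\C$: this forces $d\bigl((y,t), \Gamma_{\xi_i}\bigr)$ to be equivalent to the vertical gap $|t-\xi_i(y)|$, so the one-dimensional factors $(t-\xi_1(y))^{\alpha}(\xi_2(y)-t)^\beta$ really are equivalent to powers of distances to sets in $\R^n$. The main obstacle is the parametrized Puiseux / preparation step that simultaneously makes $\alpha(y),\beta(y)$ locally constant and produces a semialgebraic $g$ on $C'$: this is the substantive content and is what forces the refinement of the cell decomposition; it rests on semialgebraic preparation (Lion--Rolin type) together with Theorem \ref{prop_3_10} which gives the Lipschitz control on the walls needed to equate vertical and Euclidean distances.
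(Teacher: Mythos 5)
This statement is not proved in the paper at all: it is quoted from [V1] and used as a black box in Section 5, so there is no internal proof to compare against; the closest thing in the paper is the way STEPS I--II of the proof of Theorem \ref{retraction} consume it together with Theorem \ref{prop_3_10}. Your overall architecture --- induction on $n$, reduction to cells with Lipschitz walls so that the vertical gap $|t-\xi(y)|$ becomes comparable to an honest distance, and a prepared normal form in the last variable --- is indeed the strategy of the cited source, but as written there are two genuine gaps. First, you assert that the regular family $(H_k;\lambda_k)$ of Theorem \ref{prop_3_10} can be ``refined into a Lipschitz cylindrical cell decomposition of $\R^n$ with $e_n$ regular'' compatible with the prescribed data. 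For a fixed direction this is false in general: already the parabola $\{x=y^2\}\subset\R^2$ admits no cylindrical decomposition whose walls are Lipschitz graphs for the last coordinate, since any decomposition compatible with it contains the graph of $\sqrt{x}$ over an interval $(0,\delta)$. A generic rotation repairs any single such example, but the hypersurfaces $H_k$ are Lipschitz graphs for \emph{varying} directions $\lambda_k$, and in general they cannot all be made Lipschitz graphs for one common direction by partitioning alone; this is exactly why [V1] (and STEP I of the proof of Theorem \ref{retraction} here) introduces a bi-Lipschitz straightening homeomorphism $h$ rather than a further refinement. Since the statement is bi-Lipschitz invariant (distances are distorted only by constants and $f\circ h^{-1}$ is again semialgebraic), the conclusion transfers back, but this straightening is a necessary missing ingredient, not a cosmetic one.

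Second, the parametrized one-variable step cannot be obtained by ``fixing $y$ and using the one-dimensional case'': pointwise Puiseux expansion gives exponents and constants with no uniformity in $y$, and, more seriously, the center of the semialgebraic preparation need not be one of the walls $\xi_1,\xi_2$ of a band chosen in advance. For $f(y,t)=|t-\theta(y)|$ with $\theta$ strictly between the walls, no refinement of the base $C'$ alone (which is all you allow yourself) makes $f$ equivalent to $(t-\xi_1)^{\alpha}(\xi_2-t)^{\beta}g(y)$ on the band: the decomposition must be made compatible with the graphs of the preparation centers, so preparation has to precede (or be interleaved with) the choice of the cell decomposition, and the centers must themselves be made Lipschitz --- which feeds back into the first gap. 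Your vague requirement that the decomposition be compatible with ``the sets governing the shape of $f$'' does not capture this. Both gaps are fixable, and fixing them reproduces precisely the machinery of regular families, straightening, and preparation that constitutes the actual proof in [V1]; but in your write-up they are the substantive content that is missing, not details.
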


\begin{rk}\label{r1}
${\ }$
\begin{itemize}
 \item
 If $A$ is a union of graphs for a direction
 $\lambda$ of  functions $\theta_1 , \dots , \theta_k$ over $\R^n$  then we may
  find an ordered family of    functions  $\xi_1 \leq \dots
  \leq\xi_k$ such that $A$ is a union of graphs of these functions for $\lambda$.
\item
Given a family of  Lipschitz functions $f_1, \dots, f_k$
defined over $\R^n$ we can find a cell decomposition $\C'$
of $\R^n$ and some Lipschitz functions $\xi_1\leq \dots \leq  \xi_m$
on $\R^n$ such that over each cell
$C=\{ q=(x\ ;\ q_{n+1})\in\R^{n+1} :\ x\in C' ,\ \xi_i\leq q_{n+1} \leq \xi_{i+1}\}$
where $C'\in\C'$, the functions $|q_{n+1}-f_i(x)|$  are comparable with each other
and comparable with functions $f_i \circ \pi_n$.
Indeed, it suffices to consider the  graphs of
functions  $f_i$, $f_i+f_j$  and  $\frac{f_i+f_j}{2}$
and now the family $\xi_1, \dots, \xi_m$  is  given by the previous
point.
\end{itemize}
\end{rk}

\subsection{Proof of the retraction Theorem.}$\ $\\
We prove Theorem \ref{retraction} by induction on $n$ where the induction hypothesis is 
\medskip\\
$\mathbf{(H_n)}$:
Suppose that $X_1,\dots ,X_s\subset \R^n$, 
that contain $p$ in their closure and a collection of bounded functions $\xi_1,\dots,\xi_l:\R^n\to\R_+$ are given.
Then, there exist a bi-Lipschitz transformation $g:\R^n\to \R^n$,
and a stratified neighborhood $(U,\Sigma_U)$ of $p$ in $X$ compatible with 
$X_1\cap U,\dots,X_s\cap U$ 
such that 
\begin{enumerate}
\item 
$g|_S$ is a diffeomorphism for every $S\in\Sigma_U$ and $\Sigma_V:=\{g(S)\}_{S\in\Sigma}$ is a stratification of $V:=g(U)$.
\item
There exist a set $N\subset V$, $\dim N<\dim V$ and Lipschitz strong deformation retraction $r:V\times [0,1]\to V$ to $N$ 
that satisfies \ref{L2ret}.1-\ref{L2ret}.5 in the statement of the Theorem.
\item\label{xi_ineq}
$\xi_j\circ g^{-1}(r(x,t))\lesssim\xi_j\circ g^{-1}(x)$ for all $x\in V$.
\end{enumerate}
\medskip
\begin{proof}
As the statement for $n=1$ is clear we proceed to the proof 
of $\mathbf{(H_{n+1})}$ assuming $\mathbf{(H_{n})}$. 
Throughout the proof, we represent points of $\R^{n+1}$ by $q=(x,y)\in\R^{n}\times\R$.
Let $X_1,\dots,X_s\subset\R^{n+1}$ that contain $p$ in their closure and $\xi_1,\dots,\xi_l:\R^{n+1}\to\R_+$ be bounded functions.
We divide the proof into 3 steps. In the first step we reduce the problem to the case 
where all the sets are subsets of graphs of Lipschitz functions. In the second step 
we "prepare" the functions $\xi_j$ and in the final step we construct the 
bi-Lipschitz map $g$ and the Lipschitz strong deformation retraction $r$.
\\
{\bf STEP I: Reduction of the problem.}

By Theorem \ref{lem function eq aux distances} there exists a finite partition $\{V_i\}_{i\in I}$
of $\R^{n+1}$ and a finite family of subsets $\{W_{j}\}_{j\in J}$ with empty interiors (if not, we may replace them by their topological boundaries),  
such that on each element $V_i$ that contains $p$ in its closure we have:
\begin{equation}\label{prod_dist}
\xi_k(q)\sim \prod_{j\in J}{d(q,W_j)^{w_{ijk}}}\ ,\ \ q\in V_i\ ,  
\end{equation}
where $1\leq k\leq l$ and $w_{ijk}\in\Q$.
We may assume that $p\in W_{j}$ for all $j\in J$ since we can remove all $W_{j}$ that do not contain $p$ 
without affecting formula (\ref{prod_dist}).
%
%
%
%
%
By Theorem \ref{prop_3_10} there exists a regular system of
hypersurfaces $H=\hk$ compatible with the topological boundaries of $X_i$'s,
$V_i$'s and $W_j$'s.
%
%
%
%
%
%
%
Next, we reduce the problem to the case where we have a stratification $\Sigma_1$ 
of $\R^{n+1}$ compatible with the sets $X_i$, $V_i$ and $W_i$ such that all the topological boundaries of 
$X_i$, $V_i$ and $W_i$ are union of strata, which are graphs of Lipschitz functions for $e_n$ over 
strata in $\R^{n}$. The reduction is obtained by constructing a bi-Lipschitz map $h:\R^{n+1}\to\R^{n+1}$ and a stratification $\A$ of $\R^{n+1}$ such that $h|_{A}$ is a diffeomorphism for every $A\in\A$ and setting $\Sigma_1$ to be $h(\A)$.


 We define $h$
over $E(H_k;\lambda_k)$,  by induction on $k$, in such a way that
$$h(E(H_k;\lambda_k))=E(F_k ;e_n)$$ (and hence $h(H_k)=F_k$) where
$F_k$ is the graph of a Lipschitz function $\eta_k$ for $e_n$.

For $k=1$ choose an orthonormal  basis of $N_{\lambda_1}$ and set
$h(q)=(x_{\lambda_1};q_{\lambda_1})$ where $x_{\lambda_1}$
 are the coordinates of $\pi_{\lambda_1}(q)$ in this basis.
Then, let $k \geq 1$ and assume that $h$ has already been
constructed on $E(H_k;\lambda_k)$. By $(i)$ of Definition \ref{family regulier d hypersurface} 
the sets $H_k$ and
$H_{k+1}$ are the graphs for $\lambda_k$ of two Lipschitz
functions $\zeta_k$ and $\zeta'_k$. For $q \in E(H_{k+1};\lambda_{k})
\setminus E(H_k;\lambda_k)$ define $h(q)$ to be  the element :
$$h(\pi_{\lambda_k}(q);\zeta_k \circ \pi_{\lambda_k}(q))
+(q_{\lambda_k}-\zeta_k \circ \pi_{\lambda_k}(q))e_n.$$

Thanks to the property $(ii)$ of  Definition \ref{family regulier
d hypersurface} we have $ E(H_{k+1};\lambda_{k+1})=
E(H_{k+1};\lambda_{k})$, so that   $h$ is actually defined over $
E(H_{k+1};\lambda_{k+1})$. Since $\zeta_k$ is Lipschitz this is a
bi-Lipschitz homeomorphism. Note also that the image is
$E(F_{k+1};e_n)$ where $F_{k+1}$ is the graph of the Lipschitz
function
$$\eta_{k+1}(q)=\eta_k \circ \pi_{e_n}(q)+(\zeta'_k-\zeta_k)
\circ \pi_{\lambda_k}\circ h^{-1}(q;\eta_k \circ \pi_{e_n}(q)).$$
This gives $h$ over $E(H_b ; \lambda_b)$. To extend $h$ to the
whole of $\R^n$ do it as in the case $k=1$ (use $\lambda_b$ instead of $\lambda_1$). Now it is easy to
check that this defines a bi-Lipschitz homeomorphism.

Next, we construct a stratification $\A$ of $\R^{n+1}$ such that 
$h|_A$ is a diffeomorphism for every $A\in \A$ and moreover,
$h(A)$ is either included in a graph of one of the $\eta_i$'s or is a band
delimited by the graphs of two consecutive $\eta_i$'s.

By induction on $k$ we define a family of stratifications $\F_k:=\{\A_{1,k},\dots,\A_{k,k} \}$ 
such that for each $i$,  $\A_{i,k}$  is a stratification of $H_i$ that refines $\A_{i,k-1}$. 

For $k=1$ define $\A_{1,1}$ to be a stratification of $H_1$.
Suppose that $\F_k$ was constructed we construct $\F_{k+1}$ as follows.

Define $\A_{k+1-j,k+1}$ by induction on $j$.
For $j=0$ set $\A_{k+1,k+1}$ to be a stratification of $H_{k+1}$.
Suppose that $\A_{k+1-j,k+1}$ was constructed we construct $\A_{k-j,k+1}$.
Since the hypersurface $H_{k+1-j}$ is a graph of a Lipschitz function for $\lambda_{k-j}$,
we set $\A_{k-j,k+1}$ to be a refinement of $\A_{k-j,k}$ that is compatible with all $\pi_{\lambda_{k-j}}^{-1}(A)\cap H_{k-j}$ for $A\in\A_{k-j+1,k+1}$.

Now, the family $\F_b$ consists of stratifications of the hypersurfaces\\ $\hk$
that induces a stratification $\A$ of $\R^{n+1}$ in the following way. 
The strata of $\A$ are:
\begin{itemize} 
\item The strata of each $\A_{j,b}$, $j=1,\dots,b$
\item The bands delimited by the graphs of $\zeta_k$ and $\zeta'_k$ for $\lambda_k$ intersected with 
$\pi_{\lambda_k}^{-1}$(A) where $A\in \A_{k,b}$ and $k=1,\dots,b$.
\item $\{q:q_{\lambda_{b}}>\zeta_b(q)\}$ and $\{q:q_{\lambda_{1}}<\zeta_1(q)\}$
\end{itemize}
By the construction of $h$, it is evident that $h|_A$ is smooth for all $A\in\A$ and 
$\Sigma_1:=\{h(A)\}_{A\in\A}$ forms a stratification of $\R^{n+1}$.

%
%
%
%
%
%

Note that the projection of $\Sigma_1$, $\Sigma_1':=\pi_n(\Sigma_1)$ forms a stratification of $\R^n$
that is
compatible with $\{\pi_n (h(W_{j}))\}_{j\in J},\ \{\pi_n (h(V_i))\}_{i\in I},\ \{\pi_n (h(X_i))\}_{1\leq i\leq s}$ and  the restrictions of all the $\eta_j$'s to the strata of $\Sigma_1'$ are smooth.

We may identify $h$ with the identity map. Indeed, suppose that 
we proved $\mathbf{(H_{n+1})}$ for the sets in $\Sigma_1$ and the functions  
$\xi_j\circ h^{-1}$ obtaining a bi-Lipschitz map 
$g_h:\R^{n+1}\to\R^{n+1}$, a stratified neighborhood $(U_h,\Sigma_{U_h})$ and a Lipschitz strong deformation retraction $r_h : V_h\times I \to V_h$  to a set $N_h\subset V_h$, where $V_h:=h(U_h)$, that satisfy the conclusion of $(H_{n+1})$. 
Set
$$ g:=g_h\circ h,\  U:=h^{-1}(U_h)\ \text{ and }\ \Sigma_U:=h^{-1}(\Sigma_{U_h}). $$
Note that $\Sigma_{U_h}$ is compatible with the sets in $\Sigma_1\cap U_h$ and therefore 
$h^{-1}$ is still a diffeomorphism on the strata of $\Sigma_{U_h}$.
It follows that $g|_S$ is a diffeomorphism for every $S\in\Sigma$ 
and $\{g(S)\}_{S\in\Sigma}$ is a stratification of $V=V_h=g(U)$.
So we set $r:=r_h$ and $N:=N_h$, which trivially satisfy conditions \ref{L2ret}.1-\ref{L2ret}.5 in the statement of the Theorem. The last condition of $\mathbf{(H_{n+1})}$ is obviously preserved.
\\ 
{\bf STEP II: Preparation of functions $\xi_j$.}

The aim of this step is to find a refining stratification $\C$ of $\Sigma_1$ such that 
over each cell $C\in\C$ one has
\begin{equation}\label{xi_m_prepared}
\xi_k(q)\sim |y-\eta_{\nu_k}(x)|^{w_{k}}a_k(x)\ , 
\end{equation}
where $(a_k) 's$ are functions to be specified later and $\nu_k,{w_{k}}$, ${w_{\nu,\nu'}}$, and ${w'_{jk}}$ are constants.

Note that 
\begin{equation}\label{split_sum}
d(q, W_{j}\cap\Gamma_{\eta_\nu})\sim |y-\eta_\nu(x)|+
 d(x, \pi_n(W_{j}\cap\Gamma_{\eta_\nu}))\ ,\ \  q\in\R^{n+1}\ , 
\end{equation}
where $j\in J$ and $1\leq\nu\leq b$. 
%
%
By Remark \ref{r1}, there exists a collection of Lipschitz functions  
$\{\theta_1,\dots,\theta_{b'}\} \supset \{\eta_1,\dots,\eta_b\}$ on $\R^n$ such that 
there exists a cell decomposition $\C_0$ of $\R^{n+1}$ with the following properties.
\begin{enumerate}
\item The cells of $\C_0$ are obtained from $\Sigma_1$ by adding the graphs and bands of the $\theta_i$'s over the cells of $\R^n$.
\item The functions $d(x, \pi_n(W_j\cap\Gamma_{\eta_\nu}))$, 
$\eta_\nu$, $|\eta_\nu-\eta_{\nu'}|$ , and $|y-\eta_\nu|$, where $1\leq\nu,\nu'\leq b$ and $j\in J$, are pairwise comparable with each other.
\end{enumerate}
Let $\C$ be a stratification of $\R^{n+1}$ that is obtained from $\C_0$ by refining the cells in $\R^n$ in such a way  that the stratification of $\R^{n+1}$, resulting by taking graphs and bands of the restrictions of $\theta_j$'s to those cells, forms a Whitney (a) stratification.


Let $C$ be an open cell of $\R^{n+1}$ that is delimited by the graphs of $\theta_{j_0}$ and
$\theta_{j_0+1}$ over a cell $C'$ of $\R^n$. Due to the fact that the cell decomposition $\C$ is compatible with the graphs of the $\eta_i$'s, 
we have either $\eta_i|_{C'}\geq\theta_{j_0+1}$ or $\eta_i|_{C'}\leq\theta_{j_0}$ for any $i\in\{1,\dots,b\}$.
Note that for any $j\in J$, 
$$ d(q,W_j)=\min_\nu d(q,W_j\cap\Gamma_{\eta_\nu})\ . $$
So, by (\ref{prod_dist}) and (\ref{split_sum}) there exists $i$ such that on $V_i$ we have
\begin{eqnarray*}
\xi_k(q) &\sim& \prod_{j\in J} (\min_\nu d(q,W_j\cap\Gamma_{\eta_\nu}))^{w_{ijk}}\\ 
&\sim&
\prod_{j\in J} \left(|y-\eta_\nu(x)|+ d(x, \pi_n(W_j\cap\Gamma_{\eta_\nu}))\right)^{w_{ijk}}\ .  
\end{eqnarray*}
Each expression of the form
$$ \left(|y-\eta_\nu(x)|+
d(x, \pi_n(W_{j}\cap\Gamma_{\eta_\nu}))\right)^{w_{ijk}} $$
is equivalent to
\begin{equation}\label{a_le_0}
 \min\left( |y-\eta_\nu(x)|^{w_{ijk}},
d(x, \pi_n(W_{j}\cap\Gamma_{\eta_\nu}))^{w_{ijk}} \right)
\quad\text{   if ${w_{ijk}}<0$ } 
\end{equation}
and
equivalent to  
\begin{equation}\label{a_ge_0}
 \max\left( |y-\eta_\nu(x)|^{w_{ijk}},
d(x, \pi_n(W_{j}\cap\Gamma_{\eta_\nu}))^{w_{ijk}} \right)
\quad\text{   if ${w_{ijk}}>0$\ . } 
\end{equation}

Since over the cell $C$, the functions $|y-\eta_\nu(x)|$, $|\eta_\nu-\eta_{\nu'}|$ and 
\mbox{$d(x, \pi_n(W_{j}\cap\Gamma_{\eta_\nu})) $}
are pairwise comparable with each other for all $1\leq\nu,\nu'\leq b$, $i\in I$, $j\in J$ and $1\leq k\leq s$, the expressions in (\ref{a_le_0}) and (\ref{a_ge_0}) 
are equal to either $|y-\eta_\nu(x)|^{w_{ijk}}$ or 
$d(x, \pi_n(W_{j}\cap\Gamma_{\eta_\nu}))^{w_{ijk}}$ on $C$.
Also, one of the following 3 holds
\begin{itemize}
\item $|y-\eta_\nu(x)|\sim |y-\eta_{\nu'}(x)|$ 
\item $|y-\eta_\nu(x)|\sim |\eta_\nu(x)-\eta_{\nu'}(x)|$
\item $|y-\eta_{\nu'}(x)|\sim |\eta_\nu(x)-\eta_{\nu'}(x)|$ . 
\end{itemize}
It follows from here that, there exist constants $\nu_k,{w_{k}}$, ${w_{\nu,\nu'}}$, and ${w'_{jk}}$
such that over the cell $C$ formula (\ref{xi_m_prepared}) holds with
$$a_k(x)=\prod_{\nu,{\nu'}}|\eta_\nu-\eta_{\nu'}|^{w_{\nu,\nu'}}
\prod_{j\in J} d(x, \pi_n(W_{j}\cap\Gamma_{\eta_\nu}))^{w'_{jk}}\ .$$
%
%
%
%
%
%
%
%
%
%
\\
{\bf STEP III: Construction of the map $g$ and the deformation retraction $r$.}

Apply the induction hypothesis to the cells of $\C':=\pi_n(\C)$ in $\pi_n(\R^{n+1})=\R^{n}$
that contain $p':=\pi_n(p)$ in their closure and to the following collection of functions 
\begin{itemize}
\item $|\theta_j(x)|$ for $1\leq j\leq b'$,
\item $|\eta_j(x)-\eta_{j+1}(x)|$ for ${1\leq j\leq b-1}$,
\item $|\theta_j(x)-\theta_{j+1}(x)|$ for ${1\leq j\leq b'-1}$,  
\item $\min\left(a_k(x)|\theta_j(x)-\theta_{j+1}(x)|^{w_{k}},1\right)$ 
for ${1\leq j\leq b'-1}$, ${1\leq k\leq l}$ , 
\item $\min\left(a_k(x)|\theta_j(x)-\eta_{\nu_k}(x)|^{w_{k}},1\right)$ 
for ${1\leq j\leq b'}$,  ${1\leq k\leq l}$,
\item $\min\left(a_k(x)|\theta_{j+1}(x)-\eta_{\nu_k}(x)|^{w_{k}},1\right)$ 
for ${1\leq j\leq b'-1}$, ${1\leq k\leq l}$. 

\end{itemize}
This provides a bi-Lipschitz map $g':\R^n\to\R^n$ and stratified neighborhood $(U',\Sigma')$ compatible with the cells of $\C'$ such that $g'|_S$ is a diffeomorphism for all $S\in\Sigma'$ and $\{g(S)\}_{S\in\Sigma'}$ is a stratification of $V':=g'(U')$.
Also, this provides a set $N'\subset V'$, $\dim N'< \dim V'$ and
a Lipschitz deformation retraction $r':V'\times I\to V'$ to $N'$ that preserves 
the strata of $\Sigma'_V:=g'(\Sigma')$.\\
Define $g:\R^{n+1}\to\R^{n+1}$ by
$$ g(q)= (g'(\pi_n(q)),q\cdot e_{n+1}), $$
where $'\cdot'$ denotes the standard scalar product.\\
Define $$ U:=\bigcup_{C} g^{-1}(\pi_n^{-1}(U')\cap C), $$
where $C$ runs over all the cells in $\C$ that contain $p$ in their closures.
\\
Define  $$ N:=N'\times\R\cap U\ .$$
To construct $\Sigma=\Sigma_U$ we note that $\Sigma'$ is a refinement of $\C'$. 
Therefore, $\Sigma'$ induces a cylindrical cell decomposition $\Sigma$ that refines $\C$ in the following way. The strata of $\Sigma$ are the strata of $\Sigma'$ and graphs and bands of functions that define the cells of $\C$ restricted to the cells in $\Sigma'$. 
It follows from the construction of $g$ and $\Sigma$ that $g|_S$, $S\in\Sigma$ is a diffeomorphism.
For the rest of the proof we will identify $\Sigma$ with its image by $g$.
The lift of $r'$ is defined as follows. Let $C$ be a cell in $\Sigma$. The cell $C$ 
can be either a graph of a function $\theta_j$ over a cell of $\Sigma'$ or a the 
set between two consecutive graphs of $\theta_j$ and $\theta_{j+1}$ over a cell of $\Sigma'$. 
In the former case the lift of $r'$ to $C$ is defined by 
$$ r(q,t)=(r'(x,t),\theta_j(r'(x,t))), $$
where $q=(x,y)\in\R^n\times\R$.
In the other case, we represent $y$ as \\
\mbox{$\tau(q)\theta_{j+1}(x)+(1-\tau(q))\theta_{j}(x)$} where $\tau:C\to[0,1]$ is defined 
by $$\tau(q):=\frac{y-\theta_j(x)}{\theta_{j+1}(x)-\theta_j(x)}\ . $$
Set 
\begin{equation}\label{set_r}
r(q,t):=(r'(x,t),\tau(q)\theta_{j+1}(r'(x,t))+(1-\tau(q))\theta_{j}(r'(x,t)))\ . 
\end{equation}
We have to show that $r$ is Lipschitz, $(d_xr_t|_C)(q) \to (d_x r_0|_C)(q)$ as $t\to 0$ and that
condition (\ref{xi_ineq}) of $\mathbf{(H_{n+1})}$  holds.

\textit{Proof that condition (\ref{xi_ineq}) of $\mathbf{(H_{n+1})}$  holds.}
%
%
In the case that $C$ is a graph over a cell of $\Sigma'$ condition (\ref{xi_ineq}) follows easily from the induction hypothesis. In the other case, the cell $C$ is delimited by $\theta_j$  and $\theta_{j+1}$ and we assume that each $\xi_k$, $1\leq k\leq s$ , is of the form (\ref{xi_m_prepared}) and either $\eta_{\nu_k}\leq\theta_j$ or $\eta_{\nu_k}\geq\theta_{j+1}$. Let us assume,
without loss of generality that the former case holds. 
Thus, 
$$|y-\eta_{\nu_k}(x)|=|y-\theta_j(x)|+|\eta_{\nu_k}(x)-\theta_j(x)|\ .$$
Set 
$$ \theta(x):=|\theta_{j+1}(x)-\theta_j(x)| \text{ and } \eta(x):=|\eta_{\nu_k}(x)-\theta_j(x)|\ .$$
Therefore, 
\begin{equation}\label{xi_mq}
\xi_k(q)\sim a_k(x)\left\{ \begin{array}{ll}
                \min(|y-\theta_j(x)|^{w_{k}},
                \eta(x)^{w_{k}} )  &  w_{k} <0\\
                \max (|y-\theta_j(x)|^{w_{k}},
                \eta(x)^{w_{k}} )  &  w_{k} >0\ .
  
               \end{array} \right.
\end{equation}
Let $z:=z(t)=(z_1(t),z_2(t))$ be the components of the retraction $r=r(q,t)$ where $(z_1(t),z_2(t))\in \R^n\times\R$, where $q=(x,y)$. To simplify the notation we will write $(z_1,z_2)$ instead of $(z_1(t),z_2(t))$.
From (\ref{set_r}) and (\ref{xi_mq}) it follows that,
\begin{equation}\label{xi_m1}
\xi_k(z)= a_k(z_1)\left\{ \begin{array}{ll}
                 \min\{|\tau(z)\theta(z_1)|^{w_{k}},
                 \eta(z_1)^{w_{k}}\}  &  w_{k} <0\\                
                 \max\{|\tau(z)\theta(z_1)|^{w_k},
                 \eta(z_1)^{w_{k}} \} & w_{k} >0\ .\\
           \end{array} \right.
\end{equation}
Note that $\tau(q)=\tau(z)$.
\newline
If ${w_k}<0$ then, since $\xi_k$ is bounded, 
\begin{multline}\label{xi_m_le0}
\xi_k(z) \sim \min\{ \min(a_k(z_1)|\tau(z)\theta(z_1))|^{w_k},1),
\min(a_k(z_1)\eta(z_1)^{w_k},1) \}\ .
\end{multline}

We remark that if condition (\ref{xi_ineq}) of the induction hypothesis holds for $f_1$ and $f_2$ then it also
holds for $\min\{f_1,f_2\}$. Also note that if $f$ is a non-negative and bounded function then $f\sim \min(f,1)$.
Therefore, it is enough to prove that 
$$\min(a_k(z_1)|\tau(z)\theta(z_1))|^{w_k},1)\lesssim 
\min(a_k(x)|\tau(q)\theta(x)|^{w_k},1)$$
and 
$$\min(a_k(z_1)\eta(z_1)^{w_k},1) \lesssim
\min(a_k(x)\eta(x)^{w_k},1)\ . $$
The latter immediately follows from the induction hypothesis. 
For the former inequality we note that by induction hypothesis
$$
\min(a_k(z_1)\theta(z_1)^{w_k},1)\lesssim
\min(a_k(x)\theta(x)^{w_k},1)          
$$
and therefore,
\begin{eqnarray}\label{xi_m2}
\min(a_k(z_1)|\tau(z)\theta(z_1))|^{w_k},1) &=&
\min\{\tau(z)^{w_k} a_k(z_1)\theta(z_1)^{w_k},\tau(z)^{w_k},1\}\nonumber\\&=&
\min\{\tau(z)^{w_k}\min( a_k(z_1)\theta(z_1)^{w_k},1),1\}\nonumber\\&\lesssim&    
\min\{\tau(q)^{w_k} a_k(x)\theta(x)^{w_k},1\}\nonumber\ .         
\end{eqnarray}

Suppose now that ${w_k}>0$. It follows from the fact that $\xi_k$ is bounded, formula (\ref{xi_m1}) and the 
induction hypothesis that
\begin{equation}\label{xi_mge0}
a_k(z_1)|\tau(z)\theta(z_1)|^{w_k} \lesssim
a_k(x)|\tau(q)\theta(x)|^{w_k}\ .
\end{equation}
Therefore,
\begin{eqnarray*}
\xi_k(z) &\sim&
\max\{ (a_k(z_1)|\tau(z)\theta(z_1)|^{w_k},
                 a_k(z_1)\eta(z_1)^{w_k} \} \\
                 &\lesssim&
\max(a_k(x)|\tau\theta(x)|^{w_k},
                 a_k(x)\eta(x)^{w_k} )\\ 
                 &\sim&     \xi_k(q)\ .
\end{eqnarray*}
%
%
%

\textit{Proof that $r$ is Lipschitz and $(d_x r_t|_C)(q)\to (d_x r_0|_C)(q)$ as $t\to 0$.} 
To prove that $r$ is Lipschitz we show that the differential of $r$ is bounded where defined. In the case that the cell $C$ is a graph of $\theta_j$
over a cell $C'$ of $\Sigma'$, it follows immediately from the induction hypothesis that $r$ is Lipschitz. 

To see that $(d_x r_t|_C)(q)\to (d_x r_0|_C)(q)$ as $t\to 0$, fix $q=(x,\theta_j(x))\in C$. Recall that 
the graph of $\theta_j$ is Whitney (a) stratified over $\C'$ and therefore, by Lemma \ref{lem graph a reg hor c1},
$\theta_j$ is semi-differentiable with respect to $\C'$ and therefore semi-differentiable with respect to $\Sigma'$. Using the chain rule we compute (to simplify the notations we use the symbol '$d$' inplace of '$d_x$')
$$ (d r_t|_C)(q)=(dr'_t(x), d(\theta_j(r'_t(x))))=(dr'_t(x), d\theta_j(r'_t(x))dr'_t(x) ).  $$
Let $u\in T_{x} C'$ 
$$ \left((d r_t|_C)(q)\right)u = 
((dr'_t(x))u, d\theta_j(r'_t(x))(dr'_t(x))u ).
$$
By induction hypothesis we  have  $(dr'_t(x))u\to dr'_0(x)u$ as $t\to 0$
and since $\theta_j$ is semi-differentiable we also have 
$$ d\theta_j(r'_t(x))(dr'_t(x))u \to d\theta_j(r'_0(x))(dr'_0(x))u\ \text{ as } t\to 0. $$ 
It follows that $(d r_t|_C)(q)\to (d r_0|_C)(q)$ as $t\to 0$.

In the other case, when $C$ is a band, the derivatives of the first $n$ components of $r$
are bounded by induction hypothesis so we only have to consider the derivative of the last component.
For the sake of this computation we may assume that
$\theta_j\equiv 0$ since if not we may consider a bi-Lipschitz change of coordinates $x\mapsto x $, 
$y\mapsto y-\theta_j(x)$. Set $f:=\theta_{j+1}$.
With those assumptions, the last component of the map $r(x,y,t)$ is given by $$r_{n+1}(x,y,t)=\frac{y}{f(x)}f(r'(x,t))\ ,\quad
0 \leq y \leq f(x).$$
We estimate the derivatives of $r_{n+1}$:
\begin{eqnarray}\label{der_est}
\pa_y \left( y\frac{f(r'(x,t))}{f(x)}\right) &=& \frac{f(r'(x,t))}{f(x)} \nonumber \\
 &\lesssim& 1 \quad \text{ (by induction hypothesis) ,}\nonumber\\
\nonumber\\
\left| \pa_{x_j} \left( y\frac{f(r'(x,t))}{f(x)}\right)\right| &=& \left| y\left[\frac{\pa_{x_j}\left( f(r'(x,t))\right)f(x)-f(r'(x,t))f_{x_j}(x)  } {f^2(x)}\right]\right|\nonumber\\ 
&\lesssim&
\left|\pa_{x_j}\left( f(r'(x,t))\right) \right| + 
\left| f_{x_j}(x)\right|\nonumber \\ &\lesssim& 1\ .
\end{eqnarray}
Similarly $\pa_t r_{n+1}$ is bounded.
To show that $(d_x r_t|_C)(q)\to (d_x r_0|_C)(q)$ as $t\to 0$ it is enough to show that 
$d_{(x,y)} r_{n+1}(x,y,t) \to d_{(x,y)} r_{n+1}(x,y,0) $ as $t\to 0$.
$$ d_{(x,y)} r_{n+1}(x,y,t)=\frac{f(r'(x,t))}{f(x)}dy + y\frac{f(x)d(f(r'(x,t)))-f(r'(x,t))df(x)}{f^2(x)} .  $$ 
Note that for fixed $x$,  $f(r'_t(x))\to f(r'_0(x))$ and as in the case where $C$ is a graph, we also have $d_x(f(r'(x,t)))\to d_x(f(r'(x,0))) $ as $t\to 0$.

\end{proof}


\section{Proof of the Poincar\'e Lemma via Regularization of stratified forms }\label{smoothing}
The purpose of this section is to prove Theorem \ref{Poincare}.
We begin by proving a somewhat weaker version of the Poincar\'e Lemma.

\begin {lem} \label{weakPoincare}
Let $\omega$ be a closed smooth $L^\infty$ $k$-form on $X\subset\R^n$ and $p\in X$.There exists a neighborhood $U_p$ of $p$ in $X$, and an $L^\infty$ $(k-1)$-form $\gamma$ defined on $U_p$ such that $\omega=\overline d\gamma$ in $U_p$.
\end{lem}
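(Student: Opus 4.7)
The plan is to prove the lemma by induction on $\dim X$, with the Lipschitz strong deformation retraction supplied by Theorem \ref{retraction} as the key geometric ingredient. The base case $\dim X = 0$ is immediate because every $k$-form on a zero-dimensional set with $k \geq 1$ vanishes. For the inductive step, I would fix $p \in X$ and, starting from any stratification of $X$ carrying a representative $(\omega, \Sigma)$ of the given $L^\infty$ form, apply Theorem \ref{retraction} to obtain a stratified neighborhood $(U, \Sigma_U)$ of $p$ refining $\Sigma$, a subset $N \subset U$ with $\dim N < \dim U$, and a Lipschitz strong deformation retraction $r : U \times [0,1] \to U$ onto $N$ that preserves $\Sigma_U$ and is smooth on each $S \times (0,1]$.

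I would then construct a Poincar\'e homotopy operator. Since $r$ is Lipschitz, it is an $L^\infty$ morphism by Proposition \ref{lip_infty}, so by Proposition \ref{prop_pull_back} the pullback $r^*\omega$ is a well-defined $L^\infty$ $k$-form on $U \times [0,1]$. Decomposing $r^*\omega = \alpha + dt \wedge \beta$ along the product structure, where $\alpha$ contains no $dt$ factor, I would set
\[
K\omega \,:=\, \int_0^1 \beta(\cdot, t)\, dt ,
\]
a $(k-1)$-form on $U$. Boundedness of $r^*\omega$ and finiteness of the $t$-interval immediately yield $K\omega \in \Omega^{k-1}_\infty(U)$.

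The heart of the argument is then the homotopy identity $\bar d(K\omega) = \omega - r_0^*\omega$. On each open stratum $S \in \Sigma_U$, $r$ is smooth on $S \times (0,1]$, so the classical Cartan homotopy formula yields $d(K\omega) + K(d\omega) = \omega - r_0^*\omega$ on $S$, and closedness of $\omega$ reduces this to $d(K\omega) = \omega - r_0^*\omega$ stratumwise. Passing to the weak exterior derivative via integration against compactly supported smooth test forms on $\R^n$, and invoking Stokes' formula (Theorem \ref{StokesThm}) on each stratum together with the fact that lower-dimensional strata carry zero $k$-dimensional Hausdorff measure, upgrades this to the desired global weak identity. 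Since $r_0$ factors through $N$, one has $r_0^*\omega = r_0^*(\omega|_N)$, and $\omega|_N$ is a closed $L^\infty$ $k$-form on the semialgebraic set $N$ of strictly smaller dimension. The induction hypothesis applied to $\omega|_N$ at $p$ furnishes a neighborhood $V$ of $p$ in $N$ and an $L^\infty$ $(k-1)$-form $\eta$ on $V$ with $\bar d\eta = \omega|_V$; because $r_0$ is Lipschitz and smooth on each stratum, its pullback commutes with $\bar d$ stratumwise, so setting $\gamma := K\omega + r_0^*\eta$ on $U_p := r_0^{-1}(V)$ gives $\bar d\gamma = \omega$ in $U_p$, as required.

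The main obstacle I expect is upgrading the stratumwise Cartan formula to a weak exterior derivative identity on the whole of $U$. Since $r$ is only Lipschitz across strata, $K\omega$ is assembled from pieces defined on the individual top-dimensional strata, and one must control the boundary contributions when pairing with a test form. This will require combining the continuity property of Lemma \ref{cont_property}, the $L^\infty$-boundedness of $K\omega$ and $r_0^*\omega$, and a careful Stokes-type argument in the spirit of the proof of Theorem \ref{StokesThm} to show that strata of positive codimension contribute nothing to the weak identity; the same kind of analysis also justifies the commutation $\bar d(r_0^*\eta) = r_0^*(\bar d\eta)$ used in the last step.
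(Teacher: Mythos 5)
Your proposal reproduces the paper's proof essentially step for step: induction on $\dim X$, the Lipschitz strong deformation retraction of Theorem~\ref{retraction}, the fiber-integration homotopy operator $K\omega=\int_0^1\beta\,dt$ from the decomposition $r^*\omega=\alpha+dt\wedge\beta$, the weak homotopy identity $\overline d(K\omega)=\omega-r_0^*\omega$, and the conclusion $\gamma=K\omega+r_0^*\eta$ with $\eta$ supplied by the induction hypothesis on $N$. One small correction to the final paragraph: the obstacle you identify is not resolved in the paper by Theorem~\ref{StokesThm} (which concerns semialgebraic chains) nor by Lemma~\ref{cont_property}, but by condition~\ref{L2ret}.5 of the Retraction Theorem, namely $d_x r_t|_S\to d_x r_0|_S$ as $t\to 0$; after classical Stokes on $S\times(\eps,1]$, this convergence is exactly what gives $\lim_{\eps\to 0}\int_S r_\eps^*\omega\wedge\phi=\int_S r_0^*\omega\wedge\phi$, and the weak identity is verified stratum by stratum against compactly supported test forms on each stratum $S$ rather than on all of $\R^n$.
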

\begin{proof}
The proof is by induction on $\dim X$. In the case that $\dim X = 1$, the Theorem is just the Fundamental Theorem of Calculus. Suppose that $\dim X>1$. 
Let $(\omega,\Sigma)$ be a stratified form.
By Theorem \ref{retraction}
there exist a stratified neighborhood $(U_p,\Sigma')$ of $p$ such that $(\omega|_{U_p}, \Sigma')$ is a stratified form, 
$N\subset U_p$ and $r:U_p\times I\to U_p$ a Lipschitz strong deformation retraction of $U_p$ to $N$, that preserves the strata of $\Sigma'$.
The pull back of $\omega$, $r^*\omega$ is a smooth 
$L^\infty$ $k$-form on $U_p\times I$ that can be represented
as $\alpha+dt\wedge\beta$. Note that since $r$ preserves the strata in $\Sigma'$ the forms $(\alpha,\Sigma'\times \Sigma_I)$ and  $(\beta,\Sigma'\times \Sigma_I)$ are stratified forms on $U_p\times I$, where $\Sigma'\times \Sigma_I$ denotes the stratification of $U_p\times I$ obtained by taking cross products of 
strata in $\Sigma'$ with any of $\{0\}, \{1\}$ and $(0,1)$.
Set $$ \gamma_0:=\int_0^1\beta(x,t) dt\ . $$
We claim that  $$\overline d\gamma_0=r_1^*\omega-r_0^*\omega $$ 
We will show that for any stratum $S$ and any smooth $(n-k)$-form with compact support in $S$ we have $$ \int_S \overline d \gamma_0\wedge\phi=\int_S(r_1^*\omega-r_0^*\omega)\wedge\phi. $$
\begin{eqnarray}
\int_S \overline d\gamma_0\wedge\phi
=
(-1)^k\int_S \gamma_0\wedge d \phi 
&=&
(-1)^k\int_{S\times I} r^*\omega \wedge d\phi
\nonumber \\ &=& 
\int_{S\times I} d(r^*\omega \wedge \phi)
\nonumber\\ &=&
\lim_{\eps\to 0}\int_{S\times (\eps,1]} d(r^*\omega \wedge \phi)
\nonumber \\&=& 
\lim_{\eps\to 0}\int_{\pa(S\times (\eps,1])} r^*\omega \wedge \phi
\nonumber \\&=& 
\int_S r_1^*\omega \wedge \phi - \lim_{\eps\to 0}\int_{S} r_{\eps}^*\omega \wedge \phi.
\end{eqnarray}
Since $d_x r_t|_S\to d_x r_0|_S $ it follows that 
$$ \lim_{\eps\to 0}\int_{S} r_{\eps}^*\omega \wedge \phi= \int_{S} r_{0}^*\omega \wedge \phi. $$
Note that $r_1^*\omega=\omega$ and $r_0^*\omega|_N=\omega|_N$.
By induction hypothesis for the set $N$ and the form $\omega|_N$ we obtain a form $\gamma'$ on $N$ such that  $\omega|_N=\overline d\gamma'$. 
But then, since $r_0$ is a bounded Lipschitz map we have
$$r_0^*\omega=\overline d r_0^*\gamma' .$$
Thus, we set $\gamma:=\gamma_0+r_0^*\gamma'$ and obtain 
$$ \overline d\gamma=\overline d (\gamma_0+r_0^*\gamma')=\omega \ .$$
\end{proof}

In the rest of this section we will develop tools to "smoothen" the form $\gamma$ that 
was constructed in Lemma \ref{weakPoincare}. 
We adopt an approach introduced by B. Youssin in [Y]. 
Our method is an extension of the method in [Y] to the setting of
stratified sets and $L^\infty$ forms (see subsection 
\ref{approx_strat}).  

Let $X$ be a compact Riemannian manifold with boundary $\pa X$.\\
Let $(\Omega_{DR}^k(int\ X),d)$ be the cochain complex of smooth $k$-forms on $int\ X:=X-\pa X$
and $(\overline{\Omega}_{DR}^k(int\ X),\overline d)$ be the cochain complex of 
continuous and bounded weakly differentiable forms with continuous and bounded weak differentials. Denote the cohomology of $\Omega_{DR}^k(int\ X)$ by  $H^k(\Omega_{DR}^\bullet(int\ X))$  and
by $H^k(\overline\Omega_{DR}^\bullet(int\ X))$ the cohomology of $\overline\Omega_{DR}^k(int\ X)$.

\begin{lem}\label{appr_1} 
Let $\omega\in \overline\Omega_{DR}^k(int\ X)$. There exists a family of smooth $k$-forms $\omega_\eps$, $\eps>0$ such that 
\begin{enumerate}
\item $\omega_\eps(x)\to\omega(x)$ for every $x\in X$ as $\eps\to0$. 
\item $d\omega_\eps(x)\to \overline d\omega(x)$ for every $x\in X$ as $\eps\to0$. 
\end{enumerate}
\end{lem}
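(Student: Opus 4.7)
The plan is to construct $\omega_\eps$ by standard mollification in local coordinates, after first extending $\omega$ across $\pa X$ so that the mollifier has room to operate near the boundary. Since $\omega$ and $\overline d\omega$ are continuous and bounded on $\mathrm{int}\, X$, they extend continuously to the closed manifold $X$. Using a collar neighborhood of $\pa X$, embed $X$ in an open Riemannian manifold $\widetilde X \supset X$ and extend $\omega$ to a continuous bounded $k$-form $\tilde\omega$ on $\widetilde X$ whose weak exterior derivative is again continuous and bounded and agrees with $\overline d\omega$ on $\mathrm{int}\, X$. Reflection across $\pa X$ in collar coordinates, followed by a smooth cut-off equal to $1$ on $X$, accomplishes this: reflection is a $C^\infty$ diffeomorphism of the collar, so pullback under it preserves continuous boundedness of forms and of their weak differentials.

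Next, choose finitely many coordinate charts $\psi_i : V_i \to U_i \subset \R^n$ of $\widetilde X$ with $X \subset \bigcup V_i$ and $\overline{V_i} \subset \widetilde X$, together with a smooth partition of unity $\{\chi_i\}$ on $X$ subordinate to $\{V_i\}$. In the $i$-th chart the components of $\chi_i\tilde\omega$ are continuous bounded compactly supported functions on $\R^n$, whose weak partial derivatives are also continuous bounded and compactly supported. Let $\phi_\eps(y)=\eps^{-n}\phi(y/\eps)$ be a standard symmetric smooth mollifier on $\R^n$ and define, coefficient by coefficient in the chart coordinates,
\[ \omega_\eps := \sum_i (\chi_i \tilde\omega) * \phi_\eps, \]
each summand being transported back to $V_i$ via $\psi_i$. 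Each summand is $C^\infty$, so $\omega_\eps \in \Omega^k_{DR}(\mathrm{int}\, X)$ for all sufficiently small $\eps>0$.

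To check convergence, the standard mollifier estimate applied to the continuous function $\chi_i\tilde\omega$ gives pointwise convergence $(\chi_i\tilde\omega)*\phi_\eps \to \chi_i\tilde\omega$, so $\omega_\eps \to \omega$ pointwise on $X$. For the exterior derivative I use the distributional identity
\[ \overline d(\chi_i\tilde\omega) = d\chi_i \wedge \tilde\omega + \chi_i\, \overline d\tilde\omega, \]
which is continuous and bounded, combined with the fact that convolution commutes with weak differentiation (immediate from integration by parts against test forms); hence
\[ d\bigl((\chi_i\tilde\omega)*\phi_\eps\bigr) = \overline d(\chi_i\tilde\omega)*\phi_\eps \to d\chi_i \wedge \tilde\omega + \chi_i\, \overline d\tilde\omega \]
pointwise. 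Summing over $i$ and using $\sum_i d\chi_i = 0$ on $X$ gives $d\omega_\eps \to \overline d\omega$ pointwise on $X$.

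The main technical point is the boundary extension in the first paragraph: the mollifier has support of radius $\eps$, so the convolution $(\chi_i\tilde\omega)*\phi_\eps$ is only meaningful at points whose $\eps$-neighborhood in $\R^n$ lies in the chart image, which forces us to enlarge $X$ to $\widetilde X$ and to verify that the weak-derivative relation survives the extension. This is routine once the extension is performed by a diffeomorphism of the collar, but it is the only place where any care is required; the rest is standard mollifier calculus.
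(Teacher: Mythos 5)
Your proof is correct, and it follows the same basic strategy as the paper's: localize with a partition of unity, mollify coefficient-by-coefficient in chart coordinates, and verify pointwise convergence of both the form and its weak differential. The chain rule $\overline d(\chi_i\tilde\omega)=d\chi_i\wedge\tilde\omega+\chi_i\overline d\tilde\omega$, plus $\sum_i d\chi_i=0$ and $\sum_i\chi_i=1$ on $X$, recovers $\overline d\omega$ in the limit exactly as you say.

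The one genuine difference is that you isolate and handle the boundary issue explicitly, whereas the paper passes over it. The paper writes that after localizing we ``may assume $X=\R^n$'' with compactly supported coefficients, but for a compact manifold with boundary some charts necessarily model a half-space, and a partition-of-unity bump near $\pa X$ yields a form whose support reaches the boundary of the chart; naive convolution then sees the zero-extension and the statement (which asks for convergence at every $x\in X$, boundary points included) is not immediate. Your collar reflection plus cut-off resolves this: one checks, by splitting the test integral at $\pa X$ and cancelling the two boundary terms via continuity of $\omega$ up to $\pa X$, that the reflected form is again continuous and bounded with continuous bounded weak differential on the larger open manifold $\widetilde X$, after which the interior mollification argument (identical to the paper's computation with Proposition \ref{conv_prop}) goes through verbatim. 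So your proof buys a complete treatment of the boundary charts at the cost of a short extension lemma; the paper's proof is terser but leaves that extension implicit.
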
  
The proof of the lemma relies on classical smoothening techniques which we introduce prior to proving the lemma. 
Let $\alpha\in\Omega^k_{DR}(\R^n)$ and $\beta \in\Omega^j_{DR}(\R^n)$ be forms with compact supports.
Suppose that
 $$\alpha = \sum_K a_K(x) dx_K, $$
where $K$ runs over all multi-indices of size $k$ and
$dx_K=dx_{K_1}\wedge\dots\wedge dx_{K_k}$.
Similarly, suppose that
$$ \beta = \sum_J b_J(x) dx_J, $$
where $J$ runs over all multi-indices of size $j$.
The {\bf convolution of $\alpha$ and $\beta$} with respect to the standard coordinates 
of $\R^n$ is defined by
$$ \alpha*\beta:= \sum_{I,J} (a_I*b_J)(x) dx_I\wedge dx_J, $$
where $$ (a_I*a_J)(x):=\int_{\R^n}a_I(x)b_J(x-y)dy. $$
Define also 
$$ \tilde\alpha(x):=\alpha(-x)=\sum_I a_I(-x)dx_I .$$
The important properties of convolution of functions extend to the case of differential forms.
We summarize them in the following proposition.
\begin{prop}\label{conv_prop}
Suppose that $\alpha\in\Omega^k_{DR}(\R^n)$, $\beta \in\Omega^j_{DR}(\R^n)$ and 
$\gamma \in\Omega^l_{DR}(\R^n)$ have compact supports and $k+j+l=n$. Then, \\
(1) $\alpha*\beta=(-1)^{jk}\beta*\alpha$ .\\
(2) $d(\alpha*\beta)=(d\alpha)*\beta=(-1)^k\alpha*d\beta$.\\
(3) $\int_{\R^n} (\alpha*\beta)\wedge\gamma = \int_{\R^n} \alpha\wedge(\tilde \beta*\gamma)$.\\
(4) $d\tilde\alpha=-\widetilde{ d\alpha}$.\\
(5) Suppose that $\phi_\eps$, $\eps> 0$ is a family of smooth functions such that
$\text{supp }\phi_\eps\subset B(0,\eps)$, $\int_{\R^n}\phi_\eps(x)dx_1\dots dx_n=1$  and $\phi_\eps=\tilde\phi_\eps$. Then,
$\|\alpha  - \alpha*\phi_\eps \|\to 0$ as $\eps\to 0$.
\end{prop}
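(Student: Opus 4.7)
The five assertions are the standard properties of scalar convolution upgraded to forms, and I would reduce each to a statement about coefficient functions, keeping careful track of signs that arise from permuting $dx_I$, $dx_J$ and $dx_i$. Concretely, for every multi-index one has the familiar facts: $a*b = b*a$ by the substitution $z=x-y$; $\partial_i(a*b) = (\partial_i a)*b = a*(\partial_i b)$ by differentiating under the integral sign; and $\int(a*b)c\,dx = \int a(\tilde b * c)\,dx$ by Fubini. All three require only that the functions are continuous with compact support, which is guaranteed by the hypothesis that $\alpha,\beta,\gamma$ have compact supports.

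For (1), I would write $\alpha*\beta = \sum_{I,J}(a_I*b_J)\,dx_I\wedge dx_J$, apply scalar commutativity to each coefficient, and then use $dx_I\wedge dx_J = (-1)^{jk}dx_J\wedge dx_I$ since $|I|=k$ and $|J|=j$; this yields the stated sign. For (2), I would compute $d(\alpha*\beta) = \sum_{i,I,J}\partial_i(a_I*b_J)\,dx_i\wedge dx_I\wedge dx_J$ and then use the two scalar identities for $\partial_i(a_I*b_J)$: the first gives $(d\alpha)*\beta$ directly; the second produces $\sum a_I*(\partial_i b_J)\,dx_i\wedge dx_I\wedge dx_J$, and moving $dx_i$ past the $k$-form $dx_I$ costs $(-1)^k$, giving $(-1)^k\alpha*d\beta$. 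For (3), I would expand both sides in multi-indices; the only term that contributes to the integral of an $n$-form on the left is $(a_I*b_J)c_L\,dx_I\wedge dx_J\wedge dx_L$ with $I,J,L$ complementary, and on the right the corresponding term is $a_I(\tilde b_J*c_L)\,dx_I\wedge dx_J\wedge dx_L$. The scalar identity $\int(a*b)c = \int a(\tilde b*c)$ (Fubini after substituting $z=x-y$ inside the inner integral) closes this, and the wedge factors match on both sides so no sign appears.

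For (4), a direct application of the chain rule gives $\partial_i\bigl(a_I(-x)\bigr) = -(\partial_i a_I)(-x)$, hence
\begin{equation*}
d\tilde\alpha = \sum_{i,I}\partial_i(a_I(-x))\,dx_i\wedge dx_I = -\sum_{i,I}(\partial_i a_I)(-x)\,dx_i\wedge dx_I = -\widetilde{d\alpha}.
\end{equation*}
For (5), I would apply the classical fact that if $f$ is continuous with compact support and $\phi_\eps$ is an approximate identity, then $f*\phi_\eps \to f$ uniformly as $\eps\to 0$, to each coefficient $a_I$ of $\alpha$. Since $\alpha*\phi_\eps = \sum_I(a_I*\phi_\eps)\,dx_I$, uniform convergence of each coefficient translates to convergence in the sup-norm of $\alpha*\phi_\eps$ to $\alpha$; the symmetry assumption $\phi_\eps = \tilde\phi_\eps$ is not strictly needed for (5) itself but ensures the mollifier is compatible with (3).

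The only real bookkeeping is the sign tracking in (1) and (2); there is no analytic obstacle since everything has compact support and the scalar statements are classical. The mildly subtle point is to be explicit that the convolutions and derivatives may be interchanged, which is justified by the smoothness and compact support of the coefficient functions on $\R^n$.
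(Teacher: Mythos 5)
Your proof is correct and matches the paper's essentially line by line: reduce each assertion to a scalar statement about the coefficient functions, track signs arising from permuting $dx_I$, $dx_J$, $dx_i$, and invoke Fubini and the classical mollifier fact for (3) and (5). The only small divergence is in the second equality of (2), where you directly move $dx_i$ past $dx_I$ to pick up $(-1)^k$, whereas the paper derives the same sign by combining (1) with the already-established $d(\alpha*\beta)=(d\alpha)*\beta$; both give $(d\alpha)*\beta=(-1)^k\alpha*d\beta$.
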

\begin{proof}
Suppose that  $\alpha=\sum_{K} a_K dx_K$, $\beta=\sum_{J} b_J dx_J$ and $\gamma=\sum_{L} a_L dx_L$.\\
{\it Proof of (1): }
$$ \alpha*\beta=\sum_{K,J} (a_K*b_J) dx_K\wedge dx_J=(-1)^{jk}\sum_{I,J} (b_J*a_K) dx_J\wedge dx_K=(-1)^{jk}\beta*\alpha.$$ 
{\it Proof of (2): }
Observe that
$$d(\alpha*\beta)=d\left(\sum_{K,J} (a_K*b_J) dx_K\wedge dx_J\right)=
\sum_{K,J,i} (\frac{\pa a_K}{\pa x_i}*b_J) dx_i\wedge dx_K\wedge dx_J=(d\alpha)*\beta. $$
This computation together with (1) shows that 
$$(-1)^{k(j+1)} \alpha*d\beta= (d\beta)*\alpha=d(\beta*\alpha)=(-1)^{jk}d(\alpha*\beta)=(-1)^{jk}(d\alpha)*\beta. $$
(2) follows.\\
{\it Proof of (3):}
In the following computation we set $dy_{[n]}:=dy_1\wedge\dots\wedge dy_n$.  
\begin{eqnarray*} 
& &\int_{\R^n} \sum_{K,J,L} (a_K*b_J)(x) c_L(x) dx_K\wedge dx_J \wedge dx_L 
\\&=& 
\int_{\R^n} \sum_{K,J,L} \left(\int_{\R^n} a_K(y)b_J(x-y)dy_{[n]}\right) c_L(x) dx_K\wedge dx_J \wedge dx_L 
\\&=& 
\sum_{K,J,L} \int_{\R^n}\int_{\R^n} a_K(y)b_J(x-y)c_L(x) dy_{[n]}dx_K\wedge dx_J \wedge dx_L 
\end{eqnarray*}
interchanging the order of integration we get
\begin{eqnarray*}
& &\sum_{K,J,L} \int_{\R^n}\int_{\R^n} a_K(y)b_J(x-y)c_L(x) dx_{[n]}dy_K\wedge dy_J \wedge dy_L 
\\&=& 
\sum_{K,J,L} \int_{\R^n} a_K(y)(\tilde b_J*c_L)(y) dy_K\wedge dy_J \wedge dy_L 
\\&=& \int_{\R^n} \alpha\wedge(\tilde\beta*\gamma).
\end{eqnarray*}
{\it Proof of (4):} Trivial.\\
{\it Proof of (5):}  Note that
\begin{equation}
 \alpha-\alpha*\phi_\eps = \sum_I \left(a_I(x)-a_I*\phi_\eps(x) \right) dx_I. 
\end{equation}
Since $a_I$'s are continuous functions,  classical arguments imply that $\| a_I - a_I*\phi_\eps\|\to 0 $ as $\eps\to 0$. Therefore, 
$$ \|\alpha-\alpha*\phi_\eps\| = \|\sum_I \left(a_I(x)-a_I*\phi_\eps(x) \right) dx_I\| \leq
\sum_I \|a_I-a_I*\phi_\eps\|\to 0 \text { as } \eps\to 0. $$

\end{proof}
Next we proceed to the proof of Lemma \ref{appr_1}.
\begin{proof}
Let $\{U_i\}$, $i\in L:=\{1,\dots,L\}$ be a cover of $X$ by coordinate charts. Let $\phi_i$ be a partition of unity subordinate to the cover $\{U_i\}_{i\in L}$. 
Since $\omega = \sum _{i\in L} \phi_i \omega$, it is enough to prove the lemma for a form supported in one coordinate chart. Therefore, we may assume that $X=\R^n$, $\omega = \sum_I a_I(x) dx_I $, where
$x$ denotes the standard coordinates in $\R^n$ and $a_I$'s are continuous functions with compact supports.
Let $\phi_\eps$ be a family of smooth functions as in Proposition \ref{conv_prop} (5) and 
set 
$$ \omega_\eps :=\omega*\phi_\eps .$$
It follows from Proposition \ref{conv_prop} (5) that $\omega_\eps(x)\to\omega(x)$ for all $x$. 
To show (2) it is enough to show  $ d\omega_\eps= \overline d \omega * \phi_\eps $, since 
$(\overline d\omega*\phi_\eps) (x)\to \overline d \omega(x) $.
We show that 
$$ \int_{\R^n}\left( \overline d \omega * \phi_\eps \right)\wedge\varphi = 
\int_{\R^n} d\omega_\eps\wedge\varphi ,$$
for any smooth $(n-k-1)$-form with compact support. Indeed,
\begin{eqnarray*}
\int_{\R^n}\left( \overline d \omega * \phi_\eps \right)\wedge\varphi &=&
\int_{\R^n}\overline d \omega\wedge\left( \varphi*\phi_\eps\right)
\\&=& 
(-1)^{n}\int_{\R^n} \omega\wedge \left(\varphi*d\phi_\eps \right)
\\&=&
(-1)^{n+nk}\int_{\R^n}(\varphi*d\phi_\eps)\wedge\omega\\ &=& (-1)^{n+nk}\int_{\R^n}\varphi\wedge(\widetilde{d\phi_\eps}*\omega) 
\\&=&
(-1)^{n+nk+1+(n-k-1)(k+1)}\int_{\R^n}({d\phi_\eps}*\omega)\wedge\varphi\\&=&
\int_{\R^n} d\omega_\eps\wedge\varphi.
\end{eqnarray*}

\end{proof}

\begin{thm}\label{Stk} 
Let $\omega\in \overline\Omega_{DR}^k(int\ X)$ and $M\subset int\ X$ be a compact sub-manifold with boundary of dimension $(k+1)$. Then, 
$$ \int_M \overline d\omega = \int_{\pa M} \omega \ .$$
\end{thm}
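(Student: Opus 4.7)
The plan is to reduce to the classical Stokes' theorem via the smooth approximation provided by Lemma \ref{appr_1}. Apply that lemma to $\omega$ to obtain a family $\omega_\eps$ of smooth $k$-forms on $int\,X$ such that $\omega_\eps(x)\to\omega(x)$ and $d\omega_\eps(x)\to\overline d\omega(x)$ pointwise on $X$ as $\eps\to 0$. Because $M$ is a compact smooth submanifold with boundary contained in $int\,X$, each $\omega_\eps$ and its differential $d\omega_\eps$ are smooth forms defined on a neighborhood of $M$, so the classical Stokes' theorem yields
\begin{equation}\label{eq:classical_stokes}
\int_M d\omega_\eps \;=\; \int_{\pa M}\omega_\eps \qquad \text{for every }\eps>0.
\end{equation}

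The task is then to pass to the limit in both sides of \eqref{eq:classical_stokes}. This requires upgrading the pointwise convergence of Lemma \ref{appr_1} to convergence of integrals over $M$ and over $\pa M$. The natural tool is the Lebesgue dominated convergence theorem, so I would establish a uniform $L^\infty$ bound on the approximating forms. Inspecting the construction in the proof of Lemma \ref{appr_1} (a partition of unity $\{\phi_i\}$ subordinate to coordinate charts $\{U_i\}$, followed by mollification $\omega_\eps=\omega*\phi_\eps$ within each chart), the coefficients of $\omega_\eps$ and of $d\omega_\eps=(\overline d\omega)*\phi_\eps$ are convolutions of the bounded continuous coefficients of $\omega$ and $\overline d\omega$ with a fixed non-negative mollifier of total mass $1$. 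Hence
\[
\sup_{x\in X}|\omega_\eps(x)| \;\leq\; C\sup_{x\in X}|\omega(x)|, \qquad \sup_{x\in X}|d\omega_\eps(x)| \;\leq\; C\sup_{x\in X}|\overline d\omega(x)|,
\]
with $C$ independent of $\eps$. Since $M$ and $\pa M$ have finite Hausdorff measure of the appropriate dimension, the constant dominating function is integrable, and dominated convergence gives
\[
\int_M d\omega_\eps \;\longrightarrow\; \int_M \overline d\omega, \qquad \int_{\pa M}\omega_\eps \;\longrightarrow\; \int_{\pa M}\omega.
\]
Combining these with \eqref{eq:classical_stokes} yields the conclusion.

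The only delicate step is the uniform boundedness of the mollified forms, and this is transparent from the convolution construction provided one is careful to carry out the mollification in local coordinates after applying a partition of unity (so that a small $\eps$-neighborhood of each chart stays inside $int\,X$, where $\omega$ and $\overline d\omega$ are defined and bounded). With that in place, the argument is a routine application of the dominated convergence theorem combined with the classical Stokes' formula.
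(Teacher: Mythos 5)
Your proof is correct and follows essentially the same route as the paper's: approximate $\omega$ by smooth forms via Lemma \ref{appr_1}, apply classical Stokes' to each approximant, and pass to the limit over the compact $M$ and $\pa M$. The paper simply notes that the convergence from Lemma \ref{appr_1} is uniform on the compact set $M$, which makes the limit passage immediate; your use of a uniform $L^\infty$ bound plus dominated convergence is an equivalent way of saying the same thing.
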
  
\begin{proof}
By Lemma \ref{appr_1} we may find a sequence of smooth forms 
$\omega_n$ on $M$ such that $\omega_n\to\omega$ and $d\omega_n\to\overline d \omega$ in $L^\infty$ norm (since $M$ is compact).
So,
$$ \int_M \overline d\omega =\lim_{n\to\infty}\int_M d\omega_n =\lim_{n\to\infty}\int_{\pa M} \omega_n=
\int_{\pa M} \omega \ . $$
\end{proof}

\begin{thm}
In this case 
$$ H^k(\overline\Omega_{DR}^\bullet(int\ X))\cong H^k(\Omega_{DR}^\bullet(int\ X)) \ .$$
\end{thm}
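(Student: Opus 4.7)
The natural inclusion $\iota : \Omega^\bullet_{DR}(\mathrm{int}\, X) \hookrightarrow \overline{\Omega}^\bullet_{DR}(\mathrm{int}\, X)$ is a cochain map, so it induces a map on cohomology; the plan is to show this map is both injective and surjective. The essential tool will be a smoothing operator combined with an explicit chain homotopy, constructed from the convolutions appearing in Lemma \ref{appr_1}.

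First I would construct, for each $\eps>0$, a cochain map $S_\eps : \overline{\Omega}^\bullet_{DR}(\mathrm{int}\, X)\to \Omega^\bullet_{DR}(\mathrm{int}\, X)$. Using compactness of $X$ pick a finite cover by coordinate charts $\{U_i\}_{i=1}^L$ and a subordinate partition of unity $\{\phi_i\}$. Locally, $\phi_i\omega$ is compactly supported in $U_i\simeq \R^n$, and one defines $S_\eps(\phi_i\omega):=(\phi_i\omega)*\varphi_\eps$ where $\varphi_\eps$ is a smooth mollifier as in Proposition \ref{conv_prop}(5); summing over $i$ gives $S_\eps\omega$. By Proposition \ref{conv_prop}(2) and the proof of Lemma \ref{appr_1}, $dS_\eps\omega = S_\eps \overline d\omega$, so $S_\eps$ commutes with differentials.

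Next I would produce a chain homotopy $K_\eps$ such that
\begin{equation*}
\omega - \iota S_\eps\omega \;=\; \overline d K_\eps \omega + K_\eps \overline d\omega.
\end{equation*}
In each chart the natural interpolation is to write $\varphi_\eps$ as the endpoint of a one-parameter family $\varphi_{s\eps}$ (degenerating to a delta at $s=0$) and consider the $(k+1)$-form on $\R^n\times(0,\eps)$ obtained by an appropriate pullback/convolution; integrating over the parameter $s$ and invoking Stokes' theorem (in its weak form, Theorem \ref{Stk}) yields the homotopy. The key point is that both $K_\eps\omega$ and $\overline d K_\eps\omega$ remain bounded and continuous, because convolution is a bounded operation and $\overline d\omega$ is already in $\overline \Omega^\bullet_{DR}$; thus $K_\eps$ maps $\overline\Omega^\bullet_{DR}$ to itself. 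Globalising with the partition of unity $\{\phi_i\}$ requires absorbing the derivatives of $\phi_i$ into lower-order terms that cancel in the homotopy formula, which is the routine but delicate part.

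Granting the identity above, surjectivity of $\iota^*$ is immediate: every closed $\omega\in\overline\Omega^k_{DR}$ is cohomologous to the smooth closed form $\iota S_\eps\omega$. For injectivity, if $\omega\in\Omega^k_{DR}$ is closed and $\omega = \overline d\eta$ with $\eta\in\overline\Omega^{k-1}_{DR}$, then applying $S_\eps$ gives $S_\eps\omega = dS_\eps\eta$ in $\Omega^\bullet_{DR}$, while the homotopy applied to $\omega$ itself shows $\omega - S_\eps\omega = d(K_\eps\omega)$; here one must verify that the restriction of $K_\eps$ to the smooth subcomplex lands in $\Omega^{k-1}_{DR}$, which follows because $\omega$ smooth makes the fibre integral smooth.

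The main obstacle is the construction and verification of the chain homotopy $K_\eps$: one has to choose the one-parameter family of mollifications so that the resulting fibre integral is both bounded with bounded weak derivative and genuinely homotopes $\iota S_\eps$ to the identity after patching with the partition of unity. Once $K_\eps$ is in hand, the isomorphism statement follows formally from the two implications above.
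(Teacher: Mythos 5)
Your proposed smoothing operator $S_\eps\omega := \sum_i (\phi_i\omega)*\varphi_\eps$ is not a cochain map, and this defect cannot be dismissed as routine. Proposition \ref{conv_prop}(2) gives $d(\alpha*\varphi_\eps)=(d\alpha)*\varphi_\eps$ for a single compactly supported form on $\R^n$, so applying it termwise one finds
$$dS_\eps\omega \;=\; \sum_i \bigl(\overline d(\phi_i\omega)\bigr)*\varphi_\eps \;=\; S_\eps\overline d\omega \;+\; \sum_i (d\phi_i\wedge\omega)*\varphi_\eps ,$$
and the correction $\sum_i(d\phi_i\wedge\omega)*\varphi_\eps$ does \emph{not} vanish: each summand is mollified in its own chart coordinates, so the identity $\sum_i d\phi_i = 0$ does not make the convolved sum zero. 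This is precisely why Lemma \ref{appr_1} asserts only pointwise convergence $d\omega_\eps\to\overline d\omega$ as $\eps\to 0$ (the chart-dependent error terms tend to zero) rather than the exact relation $dS_\eps=S_\eps\overline d$ that your argument requires. To obtain a genuine smoothing cochain map one must use de~Rham's regularization, averaging pullbacks of the form under a compactly supported family of diffeomorphisms near the identity (so the cutoff sits on the flow parameter, not on the form) and then \emph{composing} the local operators rather than summing them --- a substantially different construction from the one you write down.

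The homotopy $K_\eps$ is likewise asserted rather than built. The one-chart fibre-integral homotopy is standard, but you offer no construction of a global $K_\eps$ on $\overline\Omega^\bullet_{DR}(\mathrm{int}\, X)$ for which both $K_\eps\omega$ and $\overline d K_\eps\omega$ remain continuous and bounded, and you do not address the fact that mollification near the frontier of $\mathrm{int}\, X$ reaches outside the domain, so a collar or one-sided mollifier is unavoidable. The paper avoids all of this by a sheaf-theoretic route: both $\Omega^\bullet_{DR}$ and $\overline\Omega^\bullet_{DR}$ form fine resolutions of the constant sheaf, with exactness of $\overline\Omega^\bullet_{DR}$ established \emph{locally} via a smooth deformation retraction of a coordinate chart to a point together with the weak Stokes formula (Theorem \ref{Stk}); no global regularization operator is needed and boundary issues never arise. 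If you want to keep your regularization route you would need to carry out de~Rham's construction in full, not the partition-of-unity averaging you sketch.
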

\begin{proof}
We take a sheaf theoretic approach. We will show that the complex of sheaves of germs of forms from 
$\Omega_{DR}^\bullet(int\ X)$ and $\overline\Omega_{DR}^\bullet(int\ X)$ form exact fine resolutions of the sheaf of locally constant functions, which would prove the theorem.
Clearly, both complexes of sheaves are fine. Exactness of $\Omega_{DR}^\bullet(int\ X)$ is the classical Poincar\'e Lemma so we only have to demonstrate exactness of $\overline\Omega_{DR}^\bullet(int\ X)$.
For that matter we introduce some notations.
Let $U$ be some contractible open coordinate chart in $X$ with $\overline U\cap \pa X=\phi$ and $r:U\times I\to U$ be a smooth strong 
deformation retraction to a point $p$ that is, $r(x,0)=p$ and $r(x,1)=x$. Let $\omega\in \overline\Omega_{DR}^k(int\ X)$ be a closed form.
Set 
$$ \gamma:= \int_{t=0}^{t=1} r^*\omega. $$
We have to show that that $\overline d \gamma=\omega$, that is,
$$ \int_U \gamma \wedge d\phi = (-1)^{k}\int_U \omega\wedge\phi\ , $$
where $\phi$ is a smooth $(n-k)$-form with compact support in $U$. 
\begin{eqnarray*}
\int_U \gamma\wedge d\phi = \int_U \left( \int_I r^*\omega \right)\wedge d\phi  &=& 
\int_{U\times I} r^*\omega\wedge d\phi 
\\&=& 
(-1)^k\int_{U\times I} d(r^*\omega\wedge\phi)\\ = (\text{by Theorem \ref{Stk}}) 
&=& 
(-1)^k\left(\int_U r_1^*\omega\wedge\phi - \int_U r_0^*\omega\wedge\phi\right) 
\\&=& 
(-1)^k\int_U \omega\wedge\phi.      
\end{eqnarray*}

\end{proof}

\subsection{Approximation on Manifolds. }$\ $\\
Suppose that $X$ is a compact manifold with boundary $\pa X$ consisting of two
disjoint components $\pa_1 X$ and $\pa_2 X$. Let $U_j$ be a neighborhood of $\pa_j X$ for $j=1,2$.
\begin{prop}\label{prop 2.6.1}
Suppose that we have the data $(X,\pa X,U_1,U_2,\omega)$ where $\omega\in \overline \Omega^k_{DR}(int\ X)$ is $\C^\infty$ in $U_1$ and $\overline d \omega$ is
$\C^\infty$ in $X$. Then for any $\eps>0$ there exists a form $\psi_\eps\in \overline \Omega^k_{DR}(X)$
such that $\| \psi_\eps\|<\eps$, $\|d\psi_\eps\|<\eps$, $\omega+\overline d\psi_\eps$ is $\C^\infty$ on $X-U_2$
and $supp\ \psi_\eps$ does not intersect $\pa X$.
\end{prop}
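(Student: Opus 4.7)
\medskip

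\noindent\textbf{Proof plan.} The strategy combines a local convolution smoothing with a Poincar\'e homotopy operator, iterated through a finite cover. The aim is to construct $\psi_\eps$ as a telescoping sum $\psi_\eps=\sum_j\chi_j\,K_j(\omega_j-(\omega_j)_{\delta_j})$, where each summand cures the lack of smoothness of $\omega$ over one coordinate ball in the complement of $U_2$, while a careful choice of cutoffs $\chi_j$ ensures that the correction does not spoil the smoothness already achieved on the preceding balls.

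\smallskip

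\noindent\emph{Local step.} On a Euclidean ball $B\subset\R^n$ with a form $\eta\in\overline\Omega^k_{DR}(B)$ having a smooth weak differential $\overline d\eta$, set $\eta_\delta:=\eta\ast\phi_\delta$ on a slight shrinking $B'\Subset B$ and let $K$ denote the classical cone homotopy based at a chosen point $p\in B'$, satisfying $\mathrm{id}=\overline d K+K\overline d$ on forms in $\overline\Omega^{\bullet}_{DR}(B')$. The identity
\[
  \eta-\overline d\bigl(K(\eta-\eta_\delta)\bigr)=\eta_\delta+K\bigl(\overline d\eta-d\eta_\delta\bigr)
\]
exhibits $\eta-\overline d\psi$ as smooth on $B'$, because $\eta_\delta$ is smooth by mollification and the second summand is the $K$-image of a smooth form (here using crucially that $\overline d\eta$ is smooth by hypothesis). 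Lemma~\ref{appr_1} (together with the computation in its proof) gives $\|\eta-\eta_\delta\|_\infty\to 0$ and $\|\overline d\eta-d\eta_\delta\|_\infty\to 0$ as $\delta\to 0$ on $\overline{B'}$, so $\|\psi\|$ and $\|\overline d\psi\|$ can be made arbitrarily small by choosing $\delta$ small.

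\smallskip

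\noindent\emph{Global patching.} Choose nested neighbourhoods $\pa_2 X\subset U_2''\Subset U_2'\Subset U_2$ and cover $\overline{X\setminus U_2}$ by finitely many coordinate balls $B_1,\dots,B_N$ with $\overline{B_j}\subset \mathrm{int}\, X\setminus\overline{U_2''}$, together with shrinkings $B'_j\Subset B_j$ still covering $\overline{X\setminus U_2}$; order them so that, setting $R_i:=U_1\cup B'_1\cup\dots\cup B'_i$, the ``new'' region $\overline{B'_{i+1}\setminus R_i}$ lies well inside $B_{i+1}$. By induction on $i$, produce $\Psi_i\in\overline\Omega^{k-1}_{DR}(X)$ supported in $B_1\cup\dots\cup B_i$ with $\|\Psi_i\|,\|\overline d\Psi_i\|<i\eps/N$ and $\omega+\overline d\Psi_i$ smooth on $R_i$. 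At the inductive step, apply the local lemma to $\omega_i:=\omega+\overline d\Psi_i$ on $B_{i+1}$, multiply the resulting primitive $\psi_{i+1}$ by a cutoff $\chi_{i+1}$ which equals $1$ on $B'_{i+1}$ and is compactly supported in $B_{i+1}$, and set $\Psi_{i+1}:=\Psi_i+\chi_{i+1}\psi_{i+1}$. Taking $\psi_\eps:=\Psi_N$ and using that each $\chi_j\psi_j$ is compactly supported in $\mathrm{int}\, X$ yields the desired form.

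\smallskip

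\noindent\emph{Main obstacle.} The delicate point is that $\chi_{i+1}\psi_{i+1}$ must not destroy the smoothness of $\omega+\overline d\Psi_i$ on the previously-cured region $R_i$. The derivative $\overline d(\chi_{i+1}\psi_{i+1})$ picks up a transition term $d\chi_{i+1}\wedge\psi_{i+1}$, so one needs $\psi_{i+1}$ itself to be smooth on the support of $d\chi_{i+1}$. The cone operator $K$ is not a local operator, so even though $\omega_i-\omega_{i,\delta}$ is smooth over $R_i\cap B_{i+1}$, the primitive $\psi_{i+1}=K(\omega_i-\omega_{i,\delta})$ need not be. This is handled by positioning the cone base $p\in R_i\cap B_{i+1}$, choosing $\chi_{i+1}$ so that its transition region $\{0<\chi_{i+1}<1\}$ lies in $R_i\cap B_{i+1}$ and is radially connected to $p$ through $R_i$, and observing that on such a star-neighborhood of $p$ the smoothness of $\omega_i-\omega_{i,\delta}$ is transported by $K$; the remaining compactness/geometric constraints are arranged by refining the cover $\{B_j,B'_j\}$ and inflating the balls as needed. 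With this choice of cutoffs the inductive step carries through and the total correction stays within the $\eps$-bound in both $L^\infty$ and $\overline d$-$L^\infty$ norms.
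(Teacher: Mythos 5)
The paper does not give its own proof; it says ``the proof is the same as the proof of Proposition~2.6.1 in [Y]'' (Youssin). Your sketch follows the same broad template (mollify, apply a homotopy formula, patch over a finite cover), and your \emph{local step} is correct: the identity $\eta-\overline d\,K(\eta-\eta_\delta)=\eta_\delta+K(\overline d\eta-d\eta_\delta)$ indeed exhibits $\eta+\overline d\psi$ as smooth on a ball, using continuity of $\eta$ and smoothness of $\overline d\eta$. The gap is precisely in the ``main obstacle'' you flag at the end, and I don't think your proposed fix closes it. The cone operator $K$ based at $p$ is genuinely nonlocal: $(K\alpha)(x)$ depends on $\alpha$ along the whole segment $[p,x]$. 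Consequently $\psi_{i+1}=K(\omega_i-\omega_{i,\delta})$ is smooth only on the \emph{star} of $p$ inside the smoothness region $R_i\cap B_{i+1}$ of $\omega_i-\omega_{i,\delta}$, and you need it smooth on all of $R_i\cap\operatorname{supp}(d\chi_{i+1})$. Arranging ``radial connectedness to $p$ through $R_i$'' by refining the cover is not something you can take for granted: $R_i=U_1\cup B'_1\cup\dots\cup B'_i$ is built up over the course of the induction and $R_i\cap B_{i+1}$ can easily be disconnected (e.g.\ when a chain of balls wraps around a handle and the last ball meets $R_i$ on two sides), at which point no single cone base $p$ works and ``inflating the balls'' just enlarges the bad set. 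So as written the inductive step does not go through without a substantial further argument (a shelling-type combinatorial control of the cover, which you neither state nor prove).

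The standard way to avoid this difficulty --- and, to my recollection, what [Y] actually relies on --- is to replace the cone operator by de~Rham's regularization pair $(R_\delta,A_\delta)$ on a coordinate ball: one averages pullbacks by compactly supported diffeomorphisms close to the identity (localized translations), obtaining $\operatorname{id}-R_\delta=\overline d A_\delta+A_\delta\overline d$ with both $R_\delta$ and $A_\delta$ \emph{pseudolocal} (they move supports by at most $\delta$ and send a form that is $\C^\infty$ on an open set $V$ to one that is $\C^\infty$ on a $\delta$-shrinking of $V$), $R_\delta$ smoothing, and $\|A_\delta\alpha\|,\|R_\delta\alpha-\alpha\|\to 0$. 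Setting $\psi_{i+1}:=-A_\delta\omega_i$ gives $\omega_i+\overline d\psi_{i+1}=R_\delta\omega_i+A_\delta\overline d\omega$, which is smooth on $B'_{i+1}$ \emph{and}, crucially, on $R_i$ because pseudolocality preserves the smoothness you have already won; the cutoff $\chi_{i+1}$ and the attendant transition term $d\chi_{i+1}\wedge\psi_{i+1}$ are no longer needed at all, since $A_\delta\omega_i$ is already compactly supported in $B_{i+1}$. This is the ingredient missing from your argument. Everything else in your global patching (finite cover of $\overline{X\setminus U_2}$ away from $\partial X$, the $\eps/N$ bookkeeping, compact support of the correction) is fine, and once the cone operator is swapped out for de~Rham's local homotopy the proof goes through.

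One minor remark: you do not need $\psi_{i+1}$ to be smooth on all of $\operatorname{supp}(d\chi_{i+1})$, only on $R_{i+1}\cap\operatorname{supp}(d\chi_{i+1})=R_i\cap\operatorname{supp}(d\chi_{i+1})$ --- you state the stronger requirement, but your intended fix is aimed at the weaker one, so this is only a phrasing issue. Also note the paper's statement has a typo ($\psi_\eps$ must be a $(k-1)$-form, as is clear from its use in Proposition~\ref{prop 2.6.2}); you silently correct this.
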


The proof is the same as the proof of Proposition 2.6.1 in [Y]. \\

\begin{prop}\label{prop 2.6.2}
Let $S\subset\R^n$ be a Riemannian manifold with compact closure. Suppose that $\omega\in \overline \Omega^k_{DR}(S)$ with  $\overline d\omega$ a $\C^\infty$ form. Then, for any positive lower semi-continuous function $\delta:S\to\R_+$
with $\delta(x)\to 0$ as $x$ approaches any point in $\pa S$, there exists a form $\psi\in \overline \Omega^k_{DR}(S)$ such that $|\psi(x) |< \delta(x)$, $|\overline d \psi (x)| < \delta(x)$ and
$\omega+\overline d\psi$ is $\C^\infty$.
\end{prop}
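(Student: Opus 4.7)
The plan is to apply Proposition \ref{prop 2.6.1} iteratively along an exhaustion of $S$ by compact manifolds with boundary so that the corrections accumulate as a locally finite sum. Choose a smooth proper function $f:S\to[0,\infty)$ with $f(x)\to\infty$ as $x\to\pa S$ and, by Sard's theorem, regular values $0<c_1<c_2<\cdots$ with $c_i\to\infty$. Set $K_i:=f^{-1}([0,c_i])$, a compact submanifold with boundary, $K_i\subset\text{int}(K_{i+1})$, $\bigcup_i K_i=S$. Define the slabs
$$X_i:=K_{i+1}\setminus\text{int}(K_{i-1}),\qquad i\geq 1\ (K_0:=\emptyset).$$
For $i\geq 2$ the slab $X_i$ is a compact manifold with disjoint boundary components $\pa K_{i-1}$ and $\pa K_{i+1}$; for $i=1$ only the outer component appears, in which case Proposition \ref{prop 2.6.1} is applied with $U_1=\emptyset$ (the smoothness hypothesis on $U_1$ being vacuous).

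Set $\omega_0:=\omega$ and, inductively, assume $\omega_{i-1}=\omega+\sum_{j<i}\overline d\psi_j$ is smooth on a neighborhood of $\pa K_{i-1}$ inside $X_i$. Apply Proposition \ref{prop 2.6.1} to the data $(X_i,\pa X_i,U_1^i,U_2^i,\omega_{i-1}|_{X_i})$ with $U_1^i$, $U_2^i$ thin open collars of $\pa K_{i-1}$, $\pa K_{i+1}$ in $X_i$. This yields $\psi_i$, supported compactly in $\text{int}(X_i)$, with $\|\psi_i\|<\eps_i$ and $\|\overline d\psi_i\|<\eps_i$ (for some $\eps_i>0$ to be chosen), and with $\omega_i:=\omega_{i-1}+\overline d\psi_i$ smooth on $X_i\setminus U_2^i$. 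Extend $\psi_i$ by zero to $S$. Because $\pa K_i$ lies strictly between the levels $c_{i-1}$ and $c_{i+1}$, it sits in the interior of $X_i$, so shrinking $U_2^i$ forces $X_i\setminus U_2^i$ to contain a two-sided neighborhood of $\pa K_i$; the outer portion of this neighborhood is contained in $X_{i+1}$ and provides the smoothness hypothesis needed at step $i+1$.

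To control the sum, exploit the hypothesis on $\delta$: each $X_i$ is compact and $\delta$ is lower semi-continuous and positive, so $\delta_i:=\inf_{X_i}\delta>0$. Set $\eps_i:=\tfrac{1}{3}\min(\delta_{i-1},\delta_i,\delta_{i+1})$. A point $x\in S$ with $f(x)\in(c_k,c_{k+1})$ lies in $\text{int}(X_j)$ only for $j\in\{k,k+1\}$, so the sum $\psi:=\sum_i\psi_i$ is locally finite with at most two nonzero terms at each point; hence $|\psi(x)|\leq\eps_k+\eps_{k+1}<\delta_k\leq\delta(x)$, and the same estimate holds for $\overline d\psi$. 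Local finiteness also yields smoothness: near any $x$ the form $\omega+\overline d\psi$ coincides with $\omega_N$ for $N$ large enough, which is smooth at $x$ by construction.

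The main obstacle is the coupling between smoothness and size at each step: Proposition \ref{prop 2.6.1} at step $i$ demands that $\omega_{i-1}$ already be smooth near $\pa K_{i-1}$, and this must be produced by step $i-1$, which only smooths $\omega_{i-2}$ on $X_{i-1}\setminus U_2^{i-1}$. The geometric fact that $\pa K_i$ sits strictly in the interior of $X_i$ is what allows smoothness gained on $X_i$ to propagate across $\pa K_i$ into the adjacent slab $X_{i+1}$. The three-term choice of $\eps_i$ addresses the delicacy that $\delta$ may decay rapidly toward $\pa S$; lower semi-continuity is essential for extracting a positive lower bound on each compact slab.
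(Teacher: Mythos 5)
Your proof is correct and takes essentially the same approach as the paper: exhaust $S$ by compact slabs cut out by regular sublevel sets of a proper function, apply Proposition \ref{prop 2.6.1} on each slab to smooth one more ring, and use lower semi-continuity of $\delta$ on compact slabs to choose the $\eps_i$ small enough that the locally finite sum $\sum\psi_i$ obeys the pointwise bound. The only difference is cosmetic bookkeeping: you use two-level slabs $X_i=f^{-1}([c_{i-1},c_{i+1}])$ so at most two terms of the sum are nonzero at a point, whereas the paper uses three-level slabs $Y_i=f^{-1}([c_{i-2},c_{i+1}])$ with at most three nonzero terms, compensated by taking $\delta_i$ as a minimum over three consecutive slabs.
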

\begin{proof}
The proof is essentially taken from [Y] Proposition 2.6.2 with a modification to accommodate the
pointwise estimates for $|\psi(x)|$ and $|\overline d\psi(x)|$. 
Let $f:S\to\R$ be a $\C^\infty$ function such that $f^{-1}((-\infty,c])$ is compact for any $c\in\R$.
There exists an increasing unbounded sequence $c_1,c_2,\dots\in\R$ such that $f^{-1}(c_i)$ is a smooth
sub-manifold of $S$ for any $i=1,2,\dots\ $. 

Let $Y_i:=f^{-1}([c_{i-2},c_{i+1}])$ be a smooth compact manifold with boundary 
$\pa Y_i= f^{-1}(c_{i-2})\cup f^{-1}(c_{i+1})$, where $c_0=c_{-1}=-\infty$.
Note that $int\ Y_i=Y_i-\pa Y_i=f^{-1}((c_{i-1},c_{i+1}))$.
Set 
$$ \delta_i:=\frac{1}{3}\min \{\delta(x): x\in Y_i\cup Y_{i+1}\cup Y_{i+2}\}\ .  $$
We construct $\psi$ as $\psi=\xi_1+\xi_2+...$ where $\xi_i\in \overline\Omega^{k-1}_{DR}(S) $, 
$supp\ \xi_i\subset int\ Y_i$, $\|\xi_i\|<\delta_i$, $\|\overline d \xi_i\|<\delta_i$. 
Suppose for a moment that the forms $\xi_i$  were constructed for all $i\in\N$. Then, for any $x\in S$ there exists $i$ such that $x\in Y_{i}\cap Y_{i+1}\cap Y_{i+2}$ and $x\notin Y_j$ for $j\in\N-\{i,i+1,i+2\}$ so,
$$ |\psi(x)|=\|\xi_i+\xi_{i+1}+\xi_{i+2}\| < \delta_{i}+\delta_{i+1}+\delta_{i+2} < \delta(x)\ , $$
and
$$ |\overline d\psi(x)|=\|\overline d\xi_i+\overline d\xi_{i+1}+\overline d\xi_{i+2}\| < \delta_{i}+\delta_{i+1}+\delta_{i+2} < \delta(x)\ , $$

The rest of the proof is exactly as in [Y]. The forms $\xi_i$ are constructed inductively such that\\ $\phi_i:=\omega+\overline d(\xi_1+\xi_2+\cdots+\xi_i)$
is $\C^\infty$ on $f^{-1}((-\infty,c_i])$. For $i=0$ set $\phi_0:=\omega$ which is smooth on $f^{-1}(-\infty,c_0])=\phi$. So, we have to construct $\xi_{i+1}$ so that $\phi_i+\overline d \xi_{i+1}$ \
is $\C^\infty$ on $f^{-1}((c_{i+1},c_{i+2}])$.

Apply Proposition \ref{prop 2.6.1} to the data $(Y_{i+1},\pa Y_{i+1},U_1,U_2, \phi_i|_{Y_{i+1}})$, where $\pa_1 Y_{i+1}=f^{-1}(c_{i-1})\ $ and $\ \pa_2 Y_{i+1}=f^{-1}(c_{i+2})$, $\ U_1:=f^{-1}([c_{i-1},c_i))\ $ and $\ U_2:=f^{-1}((c_{i+1},c_{i+2}])$. We note that $\phi_i$ is $\C^\infty$ on $U_1$ by induction hypothesis and $\overline d\phi_i$ is $\C^\infty$ on $Y_{i+1}$. Proposition \ref{prop 2.6.1} provides us with 
a form $\psi_{\delta_{i+1}}\in\overline\Omega^{k-1}_{DR}(Y_{i+1})$ such that 
$\|\psi_{\delta_{i+1}} \|<\delta_{i+1}$, $\|\overline d\psi_{\delta_{i+1}} \|<\delta_{i+1}$,
$\phi_i+\overline d\psi_{\delta_{i+1}}$ is $\C^\infty$ on $Y_{i+1}-U_2=f^{-1}([c_{i-1},c_{i+1}])$ and 
$supp\  \psi_{\delta_{i+1}}$ is a compact subset of $f^{-1}((c_{i-1},c_{i+1}))$.
The latter property shows that $\psi_{\delta_{i+1}}$ can be extended by $0$ to all of $S$; denote by
$\xi_{i+1}$ this extension. 
So, 
$$\|\xi_{i+1}\|=\|\psi_{\delta_{i+1}}\|<\delta_{i+1}\ , \ \|\overline \xi_{i+1}\|= \|\overline d\psi_{\delta_{i+1}}\|<\delta_{i+1}\ .$$
\end{proof}

\subsection{Approximation on stratified sets.}\label{approx_strat}$\ $\\
Let $(X,\Sigma)\subset\R^n$ be a stratified set. 
We will need a notion of a {\bf tubular neighborhood} of a stratum $S\in\Sigma^k$.
The stratum $S\subset\R^n$ is a smooth sub-manifold so 
by [BCR] Corollary 8.9.6,  there exists a neighborhood $U$ of $S$ in $\R^n$ and a smooth retraction
$\pi:U\to S$ such that $d(x,S)=d(x,\pi(x))$ for every $x\in U$.

For each smooth function $\rho_S:S\to\R_+$, $\rho_S(x)\to 0$ as $x$ approaches $\pa S$ we associate a {\bf tubular neighborhood}
of $S$ in $U$: 
$$N_{\rho_S}(S) := \{ x\in U: d(x,S) < \rho_S(\pi(x))\}.$$

Let $\phi_S:N_{\rho_S}(S)\to\R$ be a continuous function and smooth on every stratum that intersects $N_{\rho_S}(S)$, identically equal to $1$ on $\{x:d(x,S)<1/2{\rho_S}(\pi(x))\}$ and $0$ away from $\{x:d(x,S)<3/4{\rho_S}(\pi(x))\}$.

\begin{lem}\label{gamma_ext}
Let $\gamma$ be a form on a stratum $S\in\Sigma$ such that
\begin{equation}\label{bdgm} 
|\gamma(x)|\lesssim\sup \{(1+|d\phi_S(y)|)^{-2}:\pi(y)=x \}, \text{  } x\in S. 
\end{equation}
Then, there exists an $L^\infty$ form $\hat \gamma$ on $X$ such that $\hat \gamma|_S = \gamma$.
\end{lem}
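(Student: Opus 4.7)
The natural candidate is the extension by zero of $\phi_S\cdot\pi^{*}\gamma$ from the tubular neighborhood $N_{\rho_S}(S)$ to all of $X$: for every stratum $S'\in\Sigma$ that meets $N_{\rho_S}(S)$, set
\[
 \hat\gamma|_{S'}(y):=\phi_S(y)\cdot(\pi|_{S'})^{*}\gamma(y)\ \text{for } y\in S'\cap N_{\rho_S}(S),\quad \hat\gamma|_{S'}(y):=0\ \text{otherwise.}
\]
Since $\phi_S\equiv 1$ in a neighborhood of $S$ and $\pi|_S=\mathrm{id}$, we immediately get $\hat\gamma|_S=\gamma$. First I would refine $\Sigma$ to a stratification $\Sigma'$ which is compatible with $\{\phi_S=0\}$, with $\{\phi_S=1\}$ and with the boundary of $N_{\rho_S}(S)$, and on whose strata the retraction $\pi$ is smooth; on each stratum of $\Sigma'$, $\hat\gamma$ is then smooth.

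Next I would verify that $\hat\gamma\in\widetilde\Omega^{k}(X)$. Boundedness: by shrinking $\rho_S$ (e.g.\ below the reach of $S$ at each point) one has $|d\pi(y)|\le C$ on $N_{\rho_S}(S)$, so $|\hat\gamma(y)|\le C^{k}|\gamma(\pi(y))|$; the hypothesis applied at $y=x\in S$ gives $|\gamma|\lesssim 1$ on $S$. Closed graph of $\hat\gamma$: away from $\partial S$ and away from the outer sphere $\{d(\cdot,S)=3\rho_S/4\}$ smoothness suffices; at the outer sphere, $\phi_S=0$ makes $\hat\gamma\to0$; at points of $\partial S$, the decay $|\gamma(x)|\lesssim(1+|d\phi_S(y)|)^{-2}\sim\rho_S(x)^{2}$, valid in the annular piece of the fibre where $|d\phi_S|\sim\rho_S^{-1}$, forces $\hat\gamma\to 0$, matching the zero extension.

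The most delicate step is showing that $d\hat\gamma=d\phi_S\wedge\pi^{*}\gamma+\phi_S\,\pi^{*}d\gamma$ is again a stratified form. The first summand is the one the hypothesis is tailored for:
\[
 |d\phi_S\wedge\pi^{*}\gamma|\,\lesssim\,|d\phi_S|\cdot|\gamma|\cdot|d\pi|^{k}\,\lesssim\,|d\phi_S|\,(1+|d\phi_S|)^{-2}\longrightarrow 0\quad\text{as }|d\phi_S|\to\infty,
\]
which both bounds it and gives the required limit $0$ at points of $\partial S$; at the outer sphere the limit matches because both $d\phi_S$ and $\phi_S$ vanish there smoothly. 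For the second summand, $\phi_S\,\pi^{*}d\gamma$ vanishes identically outside the compact annular support of $\phi_S$ away from $\{\phi_S=1\}$; along the inner sphere $\{\phi_S=1\}$ it coincides with $\pi^{*}d\gamma$, which is smooth.

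The main obstacle I anticipate is arranging closedness of the graph of $d\hat\gamma$ at the inner boundary near $\partial S$, where $\phi_S=1$ and $\phi_S\pi^{*}d\gamma=\pi^{*}d\gamma$ need not be bounded a priori. The remedy, permitted by the freedom in the construction, is to shrink the auxiliary function $\rho_S$ (hence $\phi_S$) enough that $\rho_S(x)\cdot|d\gamma(x)|\to 0$ as $x\to\partial S$; since $\gamma$ and hence $d\gamma$ are semialgebraic, their blow-up rate is polynomial and can always be dominated by a suitably chosen positive semialgebraic $\rho_S$. With this adjustment, $\pi^{*}d\gamma$ is controlled on the relevant piece of the tubular neighborhood, the graph of $d\hat\gamma$ is closed, and $\hat\gamma$ belongs to $\widetilde\Omega^{k}(X)$, so its class in $\Omega^{k}_{\infty}(X)$ is the desired $L^{\infty}$ extension of $\gamma$.
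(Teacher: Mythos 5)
Your construction coincides with the paper's: set $\hat\gamma$ to be the extension by zero of $\phi_S\,\pi^{*}\gamma$ from $N_{\rho_S}(S)$, compute $d\hat\gamma=d\phi_S\wedge\pi^{*}\gamma+\phi_S\,\pi^{*}d\gamma$, and use the decay hypothesis on $|\gamma|$ to control the first summand near $\partial S$. Your treatment of closedness of the graph and boundedness is more explicit than the paper's, which is useful.

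Where you diverge is in how you handle the second summand $\phi_S\,\pi^{*}d\gamma$. The paper simply asserts that it is "clearly bounded and stratified," without justification; you are right to flag that nothing in the lemma's stated hypothesis controls $|d\gamma|$, so this is a genuine gap in the written proof. However, your proposed remedy --- shrinking $\rho_S$ (hence $\phi_S$) so that $\rho_S(x)\,|d\gamma(x)|\to 0$ near $\partial S$ --- does not quite close it. The hypothesis on $|\gamma|$ is phrased in terms of $|d\phi_S|$, which depends on $\rho_S$: if you shrink $\rho_S$ to a new $\rho_S'$, then $|d\phi_S'|$ grows, and the inequality $|\gamma(x)|\lesssim(1+|d\phi_S'(y)|)^{-2}$ that you need for the \emph{first} summand $d\phi_S'\wedge\pi^{*}\gamma$ becomes strictly stronger and need no longer hold for the given $\gamma$. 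So the shrinking is not a free move; you would have to re-derive the hypothesis for the new $\phi_S'$, and there is no reason this can be done in general. The correct repair is to strengthen the hypothesis of the lemma to include an analogous decay bound on $|\overline d\gamma|$; this matches exactly how the lemma is invoked in the proof of Theorem \ref{Poincare}, where $\gamma=\psi^k_S$ comes from Proposition \ref{prop 2.6.2}, which controls \emph{both} $|\psi(x)|$ and $|\overline d\psi(x)|$ by the same $\delta(x)$. (One further small remark: as written, the bound $\sup\{(1+|d\phi_S(y)|)^{-2}:\pi(y)=x\}$ is identically $1$ since the fiber $\pi^{-1}(x)$ contains points near $S$ where $d\phi_S=0$; your computation near $\partial S$ tacitly reads this as an infimum over the fiber, which is certainly the intended meaning.)
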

\begin{proof}
Set $\hat \gamma$ to be the extension by $0$ to $X-N_{\rho_S}(S)$ of $\phi_S\pi^*\gamma$. 
To see that $\hat \gamma$ is $L^\infty$ we only have to check that $d\hat\gamma$ is a bounded stratified form.
$$ d\hat \gamma= d\phi_S\wedge\pi^*\gamma+\phi_S d\pi^*\gamma\ . $$
The second summand is clearly bounded and stratified. For the first summand we have
$ |(d\phi_S\wedge \pi^*\gamma)(x)|\to 0 $ as $x$ approaches $\pa S$ due to (\ref{bdgm}).
\end{proof}

Next we prove Theorem \ref{Poincare}. 
\medskip
%
%
%
%
%
%
%
%
\\
\textit{Proof of Theorem  \ref{Poincare}.}
We will prove the following statement.
Let $\omega$ be a closed $L^\infty$ $k$-form on $X$, $p\in X$.
There exists a neighborhood $U$ of $p$ in $X$ and a stratification $\Sigma$ of $U$, 
such that $(\omega,\Sigma)$ is a stratified form, and there exist finitely many $L^\infty$ 
forms on $U$, $\gamma^0,\dots,\gamma^n$, that 
satisfy the following properties.
\begin{enumerate}
\item $\overline d\gamma^j=\omega$ for all $j$.
\item $\gamma^j_l:=\gamma^j|_{\Sigma^{l}}$ is smooth for all $l\leq j$.
\end{enumerate}
Clearly, Theorem \ref{Poincare} follows by setting $\gamma:=\gamma^n$.
%
%
%
%
Set $\Sigma$ to be the stratification constructed in Lemma \ref{weakPoincare} and
set $\gamma^0$ to be the form provided by Lemma \ref{weakPoincare}. Suppose that $\gamma^1,\dots,\gamma^{k-1}$ were constructed and we have to 
construct $\gamma^k$. 
Denote by $N(\Sigma^k)$ the collection of tubular neighborhoods of strata in $\Sigma^k$.
By an appropriate choice of functions $\rho_S$ we may assume that the tubular neighborhoods in $N(\Sigma^k)$
are pairwise disjoint.
As $\gamma^{k-1}_k$ may not be smooth, by Proposition \ref{prop 2.6.2}, we may find forms $\psi^k_S$ on each $S\in\Sigma^k$ such that 
$\gamma^{k-1}_k+\overline d\psi^k_S$ is smooth and 
$$|\psi^k_S(x)|\leq
\sup\{(1+|d\phi_{S}(y)|)^{-2}: \pi(y)=x\}. $$ 
By Lemma \ref{gamma_ext} there exists an extension $\hat\psi^k_S$ of $\psi^k_S$ to $X$.
Since the tubular neighborhoods $N(\Sigma^k)$ do not intersect each other, there exists an $L^\infty$ form $\hat\psi^k$ on $X$ that extends all the $\hat\psi^k_S$ for $S\in\Sigma^k$.
Set  
\begin{eqnarray}
\gamma^k_j&:=&\gamma^{k-1}_j\ \ \ \ \ \ \ \ \ \ \  \text { if } j<k, \nonumber\\
\gamma^k_j&:=&\gamma^{k-1}_{j}+\overline d\hat\psi^k \ \ \text{ if } j\geq k. 
\end{eqnarray}
$\square$


%
%

\end{document}